\definecolor{chocolate}{rgb}{.82,.41,.12}
\numberwithin{equation}{section}
\theoremstyle{plain}
\newtheorem{prop}{Proposition}[section]
\newtheorem{lem}{Lemma}[section]
\newtheorem{hyp}{Assumption}[section]
\theoremstyle{definition}
\newtheorem{defin}{Definition}[section]
\theoremstyle{remark}
\def\var{\mathop{\rm Var}\nolimits}%
\newcommand{\E}{{\mathbb E}}
\newcommand{\R}{{\mathbb R}}
\newcommand{\C}{{\mathbb C}}
\newcommand{\Z}{{\mathbb Z}}
\def\argmin{\mathop{\rm arg \; min}\limits}%
\newcommand{\uargmin}[1]{\underset{#1}{\argmin}\;}
\newcommand{\norm}[1]{\| #1 \|}
\def\NMSE{\mathop{\rm NMSE}}%
\def\MSE{{\rm MSE}}%
\def\KL{{\rm KL}}%
\def\SURE{\mathop{\rm SURE}}%
\def\div{\mathop{\rm div}}%
\def\GSURE{\mathop{\rm GSURE}}%
\def\PURE{\mathop{\rm PURE}}%
\def\SUKLS{{\rm SUKLS}}%
\def\MKLS{{\rm MKLS}}%
\def\MKLA{{\rm MKLA}}%
\def\PUKLA{{\rm PUKLA}}%
\def\KLS{{\rm KLS}}%
\def\KLA{{\rm KLA}}%
\def\SE{{\rm SE}}%
\def\DOF{{\rm DOF}}%
\def\cov{{\rm Cov}}%
\newcommand{\thefont}[2]{\fontsize{#1}{#2}\fontshape{n}\selectfont}
\newcommand{\1}{\rlap{\thefont{10pt}{12pt}1}\kern.16em\rlap{\thefont{11pt}{13.2pt}1}\kern.4em}
\newcommand{\YY}{{\mathcal Y}}
\newcommand{\XX}{{\mathcal X}}
\newcommand{\bX}{\mbox{$\boldsymbol{X}$}}
\newcommand{\bY}{\mbox{$\boldsymbol{Y}$}}
\newcommand{\bV}{\boldsymbol{V}}
\newcommand{\bU}{\boldsymbol{U}}
\newcommand{\bW}{\boldsymbol{W}}
\newcommand{\bv}{\boldsymbol{v}}
\newcommand{\bu}{\boldsymbol{u}}
\newcommand{\be}{\boldsymbol{e}}
\newcommand{\btheta}{\boldsymbol{\theta}}
\newcommand{\bdelta}{\boldsymbol{\delta}}
\title{Generalized SURE for optimal shrinkage of singular values in low-rank matrix denoising}
\author{ J\'{e}r\'{e}mie Bigot, Charles Deledalle \& Delphine F\'eral  \\
\\  Institut de Math\'ematiques de Bordeaux et CNRS  (UMR 5251)   \\ Universit\'e de Bordeaux }
\date{\today}
\begin{document}
\sloppy

\maketitle

\thispagestyle{empty}

\begin{abstract}
We consider the problem of estimating a low-rank signal matrix from noisy measurements under the assumption that the distribution of the data matrix belongs to an exponential family. In this setting, we derive generalized Stein's unbiased risk estimation (SURE) formulas that hold for any  spectral estimators which shrink or threshold the singular values of the data matrix. This leads to new data-driven spectral estimators, whose optimality is discussed using tools from random matrix theory and through numerical experiments. Under the spiked population model and in the asymptotic setting where the dimensions of the data matrix are let going to infinity, some theoretical properties of our approach are compared to recent results on asymptotically optimal shrinking rules for Gaussian noise. It also leads to new procedures for singular values shrinkage in finite-dimensional matrix denoising for Gamma-distributed and Poisson-distributed measurements.
\end{abstract}

\noindent \emph{Keywords:}  matrix denoising, singular value decomposition, low-rank model, Gaussian spiked population model, spectral estimator, Stein's unbiased risk estimate, random matrix theory, exponential family, optimal shrinkage rule, degrees of freedom. \\

\noindent\emph{AMS classifications:} 62H12, 62H25.


\section{Introduction}

\subsection{Low rank matrix denoising in an exponential family}

In various applications, it is of interest to estimate a signal matrix from noisy data. Typical examples include the case of data that are produced in a matrix form, while others are concerned with observations from multiple samples that can be organized in a matrix form. In such setting, a typical inference problem involves the estimation of an unknown (non-random) signal matrix $\bX \in \R^{n \times m}$ from a noisy data matrix $\bY$ satisfying the model:
\begin{equation}
\bY = \bX + \bW, \label{eq:deformodel}
\end{equation}
where $\bW$ is an $n \times m$ noise matrix with real entries $\bW_{ij}$ assumed to be independent random variables with $\E[\bW_{ij}] = 0$ and $\var(\bW_{ij}) = \tau_{ij}^{2}$ for  $1 \leq i \leq n$ and $1 \leq j \leq m$.  In this paper, we focus on the situation where the signal matrix $\bX$   is assumed to have a low rank structure, and we consider the general setting where the distribution of $\bY$ belongs to a continuous exponential family parametrized by the entries of the matrix $\bX = \E[\bY]$. For discrete observations (count data), we also consider the specific case of Poisson noise.

The  low rank assumption on $\bX$  is often met in practice when there exists a significant correlation between the columns of $\bX$. This can be the case when the columns of $\bX$ represent 2D images at different wavelength of hyperspectral data, since images at nearby wavelengths are strongly correlated \cite{MR3105401}. Further applications, where low-rank modeling of $\bX$ is relevant, can be found in genomics \cite{WallDB01,svd-gene}, NMR spectroscopy \cite{NguyenPDL11}, collaborative filtering \cite{MR2565240} or medical imaging \cite{Bydder2005noise,LamBHSL12}, among many others.

Low-rank matrix estimation is classically done in the setting where the additive noise is Gaussian with homoscedastic variance. 
The more general case of observations sampled from an exponential family is less developed, but there exists an increasing research interest in the study of low rank matrix recovery beyond the Gaussian case. Examples of low-rank matrix recovering from Poisson distributed observations can be found in applications with count data such as network traffic analysis \cite{Bazerque2013} or call center data \cite{Shen2005}. A theory  for  low-rank  matrix  recovery and completion in the case of Poisson observations has also been  recently proposed in \cite{Cao2016}. 
 Matrix completion under a low rank assumption with additive errors having a sub-exponential distribution and belonging to an exponential family has also been  considered in \cite{Lafond15}. The recent work  \cite{MAL2016} proposes a novel framework to approximate, by a low rank matrix, a tabular data set made of numerical, Boolean, categorical or ordinal observations.

\subsection{The class of spectral estimators}

A standard approach to estimate a low rank matrix relies on the singular value decomposition (SVD) of the data matrix
\begin{equation}
\bY =  \sum_{k = 1}^{\min(n,m)} \tilde{\sigma}_{k} \tilde{\bu}_{k} \tilde{\bv}_{k}^{t},
\end{equation}
where  $\tilde{\sigma}_{1} \geq \tilde{\sigma}_{2} \geq \ldots \geq \tilde{\sigma}_{\min(n,m)} \geq 0$  denote its  singular values, and $\tilde{\bu}_{k}, \tilde{\bv}_{k}$ denote the associated singular vectors. In this paper, we propose to consider the class of spectral estimators $\hat{\bX}^{f} = f ( \bY )$, where $f : \R^{n \times m} \to \R^{n \times m}$  is a (possibly data-dependent) mapping that acts on the singular values of the data matrix $\bY$ while leaving its singular vectors unchanged. More precisely, these estimators take the form
\begin{equation} \label{eq:hatXf}
\hat{\bX}^{f} = f ( \bY ) =  \sum_{k = 1}^{\min(n,m)} f_{k}(\bY) \tilde{\bu}_{k} \tilde{\bv}_{k}^{t},
\end{equation}
where, for each $1 \leq k \leq \min(n,m)$, $f_{k}(\bY)$ are real positive values that may depend only on $\tilde{\sigma}_{k}$ (hence we write $f_{k}(\tilde{\sigma}_{k})$)
or on the whole matrix $\bY$.

\subsection{Investigated spectral estimators}

Typical examples of spectral estimators include the
classical principal component analysis (PCA) applied to matrix denoising
defined, for some $1 \leq r \leq \min(n,m)$, as
\begin{equation}
  \hat{\bX}^{r} =  \sum_{k = 1}^{r}  \hat{\sigma}_{k}  \tilde{\bu}_{k} \tilde{\bv}_{k}^{t}
  \quad \text{with} \quad
  \hat{\sigma}_{k} = f_{k}(\tilde{\sigma}_{k}) = \tilde\sigma_k
\end{equation}
for all $1 \leq k \leq r$ and where it is implicitely understood that $f_{k}(\tilde{\sigma}_{k}) = 0$ for $k \geq r+1$.
Another typical spectral estimator in matrix denoising with Gaussian measurements is the soft-thresholding \cite{MR3105401} which corresponds to the choice
\begin{equation}
  \hat{\bX}_{\mathrm{soft}} =  \sum_{k = 1}^{\min(m, n)}  \hat{\sigma}_k \tilde{\bu}_{k} \tilde{\bv}_{k}^{t}
  \quad\text{with}\quad
  \hat{\sigma}_k = f_{k}(\bY) = \left(1  - \frac{\lambda(\bY)}{\tilde{\sigma}_{k}} \right)_{+} \tilde{\sigma}_{k},
\end{equation}
for all $1 \leq k \leq \min(n,m)$ and
where $\lambda(\bY) > 0$ is a possibly data-dependent threshold parameter, and $(x)_{+} = \max(x,0)$ for any $x \in \R$.
Finaly, we will consider a more general class of shrinkage estimators, encompassing the
PCA and the soft-thresholding, that perform
\begin{equation}
  \hat{\bX}_{w} =  \sum_{k = 1}^{\min(m, n)} \hat{\sigma}_k \tilde{\bu}_{k} \tilde{\bv}_{k}^{t}
  \quad\text{with}\quad
  \hat{\sigma}_k = f_{k}(\bY) =
  w_k(\bY) \tilde{\sigma}_{k}
  ,
\end{equation}
where $w_k(\bY) \in [0, 1]$ is a possibly data-dependent shrinking weight.

\subsection{Main contributions}

Under the assumption that the distribution of $\bY$ belongs to an exponential family, the goal of this paper is to derive data-driven choices for the weights $w_{k}(\bY)$ in \eqref{eq:hatXf}. 
We construct estimators via a two-step procedure. First, an active set 
of non-zero singular values is defined. Then, in a second step, weights $w_{k}(\bY)$ associated with non-zero singular values are optimized,
and shown to reach desired asymptotical properties in the Gaussian spiked population model.
The main contributions of the paper are then the following ones.

\subsubsection{An AIC inspired criterion for rank and singular values locations estimation}

When no {\it a priori} is available on the rank of the signal matrix $\bX$,
optimizing for the weights $w_k$,
for all $1 \leq k \leq \min(m, n)$, can lead to
estimators with large variance ({\it i.e.}, overfitting the noise). 
We propose an automatic rule to prelocalize the subset of non-zero singular values.
An active set $s^{\star} \subseteq \mathcal{I}  = \{1,2,\ldots,\min(n, m)\}$ of singular values is defined as the minimizer of a penalized log-likelihood criterion that is inspired by the Akaike information criterion (AIC)
\begin{equation} \label{eq:AICintro}
s^\ast  \in  \uargmin{ s \subseteq \mathcal{I}  }
  -2 \log q(\bY; \tilde{\bX}^s) + 2 |s| p_{n,m}  \quad
  \text{with}
  \quad p_{n,m} = \frac{1}{2} \left(\sqrt{m} + \sqrt{n}\right)^2,
\end{equation}
where $\tilde{\bX}^s = \sum_{k \in s} \tilde{\sigma}_k \tilde{\bu}_k \tilde{\bv}^t_k$, $|s|$ is the cardinal of $s$, and $q(\bY; \tilde{\bX}^s)$  is the likelihood of the data in a given exponential family with estimated parameter $\tilde{\bX}^s$.
For the  case of Gaussian measurements with homoscedastic variance $\tau^{2}$,
one has that $q(\bY; \tilde{\bX}^s) = \| \bY - \tilde{\bX}^{s} \|^{2}_{F}/2\tau^2 $,
where $\| \cdot \|_{F}$ denotes the Frobenius norm of a matrix,
and we show that the active set of singular values boils down to
\begin{equation}
s^\star = \{k \; ; \; \tilde{\sigma}_k > c_+^{n,m}\} \label{eq:sastGauss},
\end{equation}
where $c_+^{n,m}  = \tau(\sqrt{m} + \sqrt{n})$. 
For Gamma and Poisson measurements, we resort to a greedy optimization procedure
described in Section \ref{sec:bulk_exp_fam}.

Once the active set has been determined,
the subsequent shrinkage estimator is obtained by optimizing
only for the weights within this subset while setting the other ones to zero.

\subsubsection{Novel data-driven shrinkage rules minimizing SURE-like formulas}

We use the principle of Stein's unbiased risk estimation (SURE) \cite{MR630098} to derive unbiased estimation formulas for the mean squared error (MSE) risk and mean Kullback-Leibler (MKL) risks of spectral estimators. Minimizing such SURE-like formulas over an appropriate class of spectral estimators is shown to lead to  novel data-driven shrinkage rules of  the singular values of the matrix $\bY$. In particular, our approach leads to novel spectral estimators in situations where the variances $\tau_{ij}^{2}$ of the entries $\bW_{ij}$ of the noise matrix are not necessarily equal, and may depend on the signal matrix $\bX$.  

As an illustrative example, let us consider spectral estimators of the form
\begin{equation} \label{eq:hatXf1}
\hat{\bX}_{w}^{1} = f ( \bY ) =  w_{1}(\bY) \tilde{\sigma}_{1} \tilde{\bu}_{1} \tilde{\bv}_{1}^{t},
\end{equation}
which only act on the first singular value $\tilde{\sigma}_{1}$ of the data while setting all the other ones to zero. In this paper, examples of data-driven choices for the weight  $w_{1}(\bY)$  are  the following ones:
\begin{description}
\item[$\bullet$] for Gaussian measurements with $n \leq m$ and known homoscedastic variance $\tau^{2}$ 
  \begin{flalign}
  w_{1}(\bY) = \left( 1 - \frac{\tau^2}{ \tilde{\sigma}_{1}^{2}}  \left(  1+ |m - n| +  2     \sum_{\ell = 2}^{n} \frac{ \tilde{\sigma}_{1}^{2}}{\tilde{\sigma}_{1}^{2} - \tilde{\sigma}_{\ell}^{2} } \right) \right)_{+} \1_{\left\{ \tilde{\sigma}_{1} > c_+^{n,m} \right\}}
  , && \label{eq:optgauss}
\end{flalign}
\item[$\bullet$] for Gamma measurements with $\tau_{ij}^2 = \bX_{ij}^2/L$ and $L > 2$ (see Section \ref{sec:examples} for a precise definition),    
  \begin{gather}
    \hspace{-3em}
  w_{1}(\bY) = \min \left[ 1,  \left(    \frac{L - 1}{ L m n } \sum_{i=1}^{n} \sum_{j=1}^{m}   \frac{\hat \bX^{1}_{ij}}{\bY_{ij} } + \frac{1}{ L m n} \left(1 + |m - n| + 2   \sum_{\ell =2  }^{\min(n,m)} \frac{\tilde{\sigma}_{1}^{2}}{\tilde{\sigma}_{1}^{2} - \tilde{\sigma}_{\ell}^{2} } \right) \right)^{-1} \right]
  \1_{\left\{ 1 \in s^\ast \right\} }
  ,
  \hspace{0.5em}
  \label{eq:optgamma}
\end{gather}
\item[$\bullet$] for Poisson measurements  with $\tau_{ij}^2 = \bX_{ij}$ (see Section \ref{sec:examples} for a precise definition) 
\begin{flalign}
  w_{1}(\bY) =  \min \left[ 1,  \frac{ \sum_{i=1}^{n} \sum_{j=1}^{m}  \bY_{ij} }{  \sum_{i=1}^{n} \sum_{j=1}^{m} \hat \bX^{1}_{ij} } \right] \1_{\left\{ 1 \in s^\ast \right\} }. && \label{eq:optpoisson}
\end{flalign}
\end{description}


Beyond the case of rank one, closed-form solutions for the weights cannot be obtained,
except for
the case of Gaussian measurements with homoscedastic variance $\tau^{2}$. 
In this latter case, the rule for $w_{1}(\bY)$ in \eqref{eq:optgauss} generalizes to other
eigenvalues $w_{k}(\bY)$ as
\begin{equation}
  w_{k}(\bY) = \left( 1 - \frac{\tau^2}{ \tilde{\sigma}_{k}^{2}}  \left(  1+ |m - n| +  2    \sum_{\ell =1 ; \ell \neq k}^{\min(n,m)} \frac{ \tilde{\sigma}_{k}^{2}}{\tilde{\sigma}_{k}^{2} - \tilde{\sigma}_{\ell}^{2} } \right) \right)_{+}
  \1_{\left\{ \tilde{\sigma}_{k} > c_+^{n,m} \right\}}. \label{eq:optGauss}
\end{equation}
For Gamma or Poisson distributed measurements,
we propose fast algorithms to get numerical approximations of the weights $w_{k}(\bY)$
(see Section \ref{sec:rankmoretwo} for more details).

\subsubsection{Asymptotic properties in the Gaussian spiked population model} \label{sec:defspiked}

Another contribution of the paper is to discuss the optimality of the shrinking weights \eqref{eq:optGauss} for Gaussian noise in the asymptotic setting where the dimensions of the matrix $\bY$ are let going to infinity. These theoretical results are obtained for the so-called {\it spiked population model} that has been introduced in the literature on random matrix theory and high-dimensional covariance matrix estimation (see e.g.\ \cite{Baik20061382,Benaych-GeorgesN12,MR2322123,MR3054091}). All the theoretical and asymptotic results of the paper (other than derivation of proposed estimators) assume this model.

\begin{defin} \label{def:spiked}
{\it
The Gaussian spiked population model corresponds to the following setting:
\begin{description}
\item[$\bullet$]   the $\bW_{ij}$  in \eqref{eq:deformodel} are iid Gaussian random variables with zero mean and  variance $\tau^2 = 1/m$,
\item[$\bullet$]    the $\bX_{ij}$'s  in \eqref{eq:deformodel} are the entries  of an unknown $n \times m$ matrix $\bX$ that has a low rank structure, meaning that it admits the  SVD
$
\bX = \sum_{k = 1}^{r^{\ast}} \sigma_{k} \bu_{k} \bv_{k}^{t},
$
where $\bu_{k}$ and $\bv_{k}$ are the left and right singular vectors associated to the singular value $\sigma_{k} > 0$, for each $1 \leq k \leq r^{\ast}$, with $\sigma_1 > \sigma_2 > \ldots > \sigma_{r^{\ast}}$,
\item[$\bullet$] the rank $r^{\ast}$ of the matrix  $\bX$ is assumed to be fixed,
\item[$\bullet$] the dimensions of the data matrix $\bY = \bX + \bW$ are let going to infinity in the asymptotic framework where the sequence $m = m_{n} \geq n$ is such that $\lim_{n \to + \infty} \frac{n}{m} = c$ with $0 < c \leq 1$.
\end{description}
}
\end{defin}

In the Gaussian spiked population model, the asymptotic locations of the empirical singular values $\tilde{\sigma}_{1} \geq \ldots \geq \tilde{\sigma}_{\min(n,m)}$ are well understood in the  random matrix theory (further details are given in Section \ref{sec:asymptotic_sv}). Note that the setting where the rank $r^{\ast}$ is not held fixed but allowed to grow with $\min(n,m)$ is very different, see e.g.\ \cite{ledoit2012} and references therein.

Under the Gaussian spiked population model, our contributions are then as follows:

\begin{description}
\item[$\bullet$] we prove the convergence of the SURE formula when the dimensions of $\bY$ tend to infinity,
\item[$\bullet$] it is shown that minimizing the asymptotic value of SURE leads to the same estimator as the limiting value of the estimator obtained by minimizing the SURE,
\item[$\bullet$] this model allows to show that the novel data-driven spectral estimators derived in this paper are asymptotically  connected to existing  optimal shrinkage rules \cite{MR3054091,GavishDonoho,MR3200641} for low-rank matrix denoising,
\item[$\bullet$] in this setting, we are also able to connect the choice of the penalty function $2 |s| p_{n,m}$ in \eqref{eq:AICintro} with Stein's notion of degrees of freedom (see e.g.\ \cite{Efron04}) for spectral estimators.
\end{description}

\subsubsection{Numerical experiments and publicly available source code}

As the theoretical properties of our estimators are studied in an asymptotic setting, we report the results of various numerical experiments to analyze the performances of the proposed estimators for finite-dimensional matrices. These experiments allow the comparison with existing shrinkage rules for Gaussian-distributed measurements and they are also used to shed some lights on the finite sample properties of the method for Gamma-distributed or Poisson-distributed measurements. We also exhibit the settings where the signal matrix $\bX$ is either easy or more difficult to recover. 
From these experiments, the main findings are the following ones:

\begin{description}
\item[$\bullet$] the use of an appropriate active set $s$ of singular values is an essential step for the quality of shrinkage estimators whose weights are data-driven by SURE-like estimators; taking $s = \left\{1 ,\ldots, \min(n,m) \right\}$ leads to poor results while  the choice of $s = s^{\ast}$ minimizing the AIC criterion \eqref{eq:AICintro} appears to yield the best performances,
\item[$\bullet$] for Gaussian noise, the performances of our approach are similar to those obtained by the asymptotically optimal spectral estimator proposed in \cite{GavishDonoho} when the true rank $r^{\ast}$ of the signal matrix $\bX$ is sufficiently small, but for large to moderate values of the signal-to-noise ratio our approach may perform better than existing methods in the literature,
\item[$\bullet$] for Gamma or Poisson distributed measurements, the spectral estimators proposed in this paper 
  give better results than estimators based on PCA (restricted to the active set $s^{\ast}$) or soft-thresholding of singular values.
\end{description}

Beyond the case of Gaussian noise, the implementation of the estimators is not straightforward, and we thus provide publicly available source code at
\begin{center}
  \url{https://www.math.u-bordeaux.fr/~cdeledal/gsure_low_rank}
\end{center}
to reproduce the figures and the numerical experiments of this paper.

\subsection{Related results in the literature}

Early work on singular value thresholding began with the work in \cite{EYM} on the best approximation of fixed rank to the data matrix $\bY$. Spectral estimators with different amounts of shrinkage for each singular value of the data matrix have then been proposed in \cite{efron1972,efron1976}. In the case of Gaussian measurements with homoscedastic variance, the problem of estimating $\bX$ under a low-rank assumption has recently received a lot of attention in the literature on high-dimensional statistics, see e.g.\ \cite{MR3105401,donoho2014,JosseSardy,MR3054091}. Recent works \cite{GavishDonoho,MR3200641} also consider the more general setting where the distribution of the additive noise matrix $\bW$ is orthogonally invariant, and such that its entries are  iid random variables with zero mean and finite fourth moment. In all these papers, the authors have focused on spectral estimators which shrink or threshold  the singular values of   $\bY$, while its singular vectors are left unchanged. In this setting, the main issue is  to derive  optimal shrinkage rules  that depends on the class of spectral estimators that is considered, on the loss function used to measure the risk of an estimator of $\bX$, and on appropriate assumptions for the distribution of the additive noise matrix $\bW$.

\subsection{Organization of the paper}

Section \ref{sec:exp} is devoted to the analysis of a data matrix whose entries are distributed according to a continuous exponential family. SURE-like formula are first given  for the mean squared error  risk, and then for the Kullback-Leibler risk. As an example of discrete exponential family, we also derive such risk estimators for Poisson distributed measurements. The computation of data-driven shrinkage rules is then discussed for Gaussian, Gamma and Poisson noises. In Section \ref{sec:gauss}, we restrict our attention to the Gaussian spiked population model in order to derive asymptotic properties of our approach. We study the asymptotic behavior of the SURE formula proposed in \cite{MR3105401,donoho2014}  for  spectral estimators using tools from RMT. This result allows to make a connection between data-driven spectral estimators minimizing the SURE for Gaussian noise, and the asymptotically optimal shrinkage rules proposed in  \cite{MR3054091,MR3200641} and \cite{GavishDonoho}. In Section \ref{sec:bulk_exp_fam},  we study  the penalized log-likelihood criterion \eqref{eq:AICintro} used to select an active set of singular values.  Its connection to the degrees of freedom of spectral estimators and rank estimation in matrix denoising is discussed.  Various numerical experiments are finally proposed in Section \ref{sec:num} to illustrate the usefulness of the approach developed in this paper for low-rank denoising and to compare its performances with existing methods. The proofs of the main results of the paper  are gathered in a technical Appendix \ref{sec:proofs}, and numerical implementation details are described in Appendix \ref{sec:algo}.

\section{SURE-like formulas in exponential families} \label{sec:exp}

For an introduction to exponential families, we refer to \cite{Brown}. The idea of unbiased risk estimation in exponential families dates back to \cite{hudson1978}. More recently, generalized SURE formulas have been proposed for the estimation of the MSE risk, 
for denoising under various continuous and discrete distributions in \cite{raphan2007learning},
and for inverse problems whithin the continuous exponential families in \cite{Eldar09}.
In \cite{Deledalle}, SURE-like formula are derived for the estimation of the Kullback-Leibler risk that applies to both
continuous and discrete exponential families. In what follows, we borrow  some ideas and results from these works. We first treat the case of continuous exponential families, and then we focus on Poisson data in the discrete case.

\subsection{Data sampled from a continuous exponential family} \label{sec:examples}

We recall that  $\bY$ is an $n \times m$ matrix with independent and real entries $\bY_{ij}$. For each $1 \leq i \leq n$ and $1 \leq j \leq m$, we assume that the random variable $\bY_{ij}$ is sampled from   a continuous exponential family, in the sense that each $\bY_{ij}$ admits a probability density function (pdf) $q(y ; \bX_{ij})$ with respect to the Lebesgue measure $\mathrm{d}y$ on the real line $\YY = \R$. The pdf  $q(y ; \bX_{ij})$ of  $\bY_{ij}$ can thus be written in the general form:
\begin{equation}
q(y ; \bX_{ij}) =  h(y) \exp \left(\eta(\bX_{ij}) y - A( \eta(\bX_{ij})) \right),\; y \in \YY,  \label{eq:expfam}
\end{equation}
where $\eta$ (the link function) is a one-to-one and smooth function,  $A$ (the log-partition function) is a twice differentiable mapping,  $h$ is a known function, and $\bX_{ij}$  is an unknown parameter of interest belonging to some open subset $\XX$ of $\R$.  Throughout the paper, we will suppose that the following assumption holds:
\begin{hyp} \label{hyp:link}
The link function $\eta$ and the log-partition function $A$ are such that
\begin{align*}
A'(\eta(x)) = x \mbox{ for all } x \in \XX,
\end{align*}
where $A'$ denotes the first derivative of $A$.
\end{hyp}
Since $\E[ \bY_{ij} ] = A'(\eta(\bX_{ij}))$ for exponential families in the general form \eqref{eq:expfam}, Assumption \ref{hyp:link} implies that $\E[\bY_{ij}] = \bX_{ij}$,    and thus the data matrix satisfies the relation $\bY = \bX + \bW$ where $\bW$ is a centered noise matrix, which is in agreement with model \eqref{eq:deformodel}.
Now, if we let  $\Theta = \eta(\XX)$, it will be also convenient to consider the expression of the pdf of $\bY_{ij}$ in the canonical form:
\begin{equation}
p(y ; \btheta_{ij}) =  h(y) \exp \left( \btheta_{ij} y - A(\btheta_{ij}) \right),\; y \in \YY,  \label{eq:expfamcan}
\end{equation}
where $\btheta_{ij} = \eta(\bX_{ij}) \in \Theta$ is usually called the canonical parameter of the exponential family. Finally, we  recall the relation $ \var( \bY_{ij} ) = A''(\btheta_{ij}) = A''(\eta(\bX_{ij}))$ where $A''$ denotes the second derivative of $A$.
Then, we denote by $\btheta$ the $n \times m$ matrix whose entries are the $\btheta_{ij}$'s.

Examples of data satisfying model \eqref{eq:expfam} are the following ones: \\

\noindent {\bf Gaussian noise with known variance $\tau^{2}$:} \\
$$
q(y ; \bX_{ij}) = \frac{1}{\sqrt{2 \pi}} \exp \left( - \frac{(y - \bX_{ij})^{2}}{2 \tau^2} \right), \; \E[\bY_{ij}] = \bX_{ij}, \; \var(\bY_{ij}) = \tau^{2},
$$
$$
\YY = \R,  \; \XX = \R, \; \Theta = \R, \;  h(y) = \frac{1}{\sqrt{2 \pi} \tau} \exp\left( - \frac{y^2}{2 \tau^2} \right), \; \eta(x) = \frac{x}{\tau^{2}}, \; A(\theta) = \tau^{2} \frac{\theta^2}{2}. \vspace{0.5cm}
$$

\noindent {\bf Gamma-distributed measurements with known shape parameter $L > 0$:} \\
$$
q(y ; \bX_{ij}) = \frac{L^{L} y^{L-1}}{\Gamma(L) \bX_{ij}^{L}} \exp \left( -L \frac{y}{\bX_{ij}} \right) \1_{]0,+\infty[}(y), \; \E[\bY_{ij}] = \bX_{ij}, \; \var(\bY_{ij}) = \frac{\bX_{ij}^{2}}{L},
$$
$$
\YY = \R, \; \XX = ]0,+\infty[, \; \Theta = ]-\infty,0[,  \; h(y) =  \frac{L^{L} y^{L-1}}{\Gamma(L)} \1_{]0,+\infty[}(y), \; \eta(x) = -\frac{L}{x}, \; A(\theta) = -L \log\left(-\frac{\theta}{L}\right). \vspace{0.5cm}
$$



The matrix $\btheta = \eta(X)$ 
 can then be estimated via the $n \times m$ matrix $\hat{\btheta}^{f} = \hat{\btheta}^{f}( \bY ) $ whose entries are given by
\begin{equation} \label{eq:hatZf}
\hat{\btheta}^{f}_{ij}( \bY ) =  \eta \left( \hat{\bX}^f_{ij} \right),  \mbox{ for all } 1 \leq i \leq n, \; 1 \leq j \leq m,
\end{equation}
where $\hat{\bX}^f_{ij}$ is a spectral estimator as defined
in eq.~\eqref{eq:hatXf}.

In the rest of this section, we follow the arguments in \cite{Eldar09} and  \cite{Deledalle} to derive SURE-like formulas under the exponential family for the estimators $\hat{\btheta}^{f}$ and $\hat{\bX}^{f}$, using either the mean-squared error (MSE) risk or the  Kullback-Leibler (KL) risk.

\subsubsection{Unbiased estimation of the MSE risk}

We consider the following MSE risk which provides a measure of discrepancy in the space $\Theta$ of natural parameters, and then indirectly in the space of interest $\XX$.

\begin{defin}  \label{def:MSErisk}
{\it
The squared error (SE) risk of $\hat{\btheta}^{f}$ is
$\SE(\hat{\btheta}^{f}, \btheta) = \| \hat{\btheta}^{f} - \btheta \|^{2}_{F}$, and the mean-squared error (MSE) risk of $\hat{\btheta}^{f}$ is defined as
$
\MSE(\hat{\btheta}^{f}, \btheta)  = \E \left[ \SE(\hat{\btheta}^{f}, \btheta) \right] = \E \left[ \| \hat{\btheta}^{f} - \btheta \|^{2}_{F} \right].
$
}
\end{defin}

Using the above MSE risk to compare $\hat{\btheta}^{f}$ and $\btheta$ implies that the discrepancy between the estimator $\hat{\bX}^{f}$ and the matrix of interest $\bX$ is  measured by the  quantity
$
\MSE_{\eta}(\hat{\bX}^{f},\bX) = \MSE(\eta(\hat{\bX}^{f}),\eta(\bX) )
$
which is different from $\MSE(\hat{\bX}^{f},\bX)$.
For Gaussian noise, $\MSE_{\eta}(\hat{\bX}^{f},\bX)  = \frac{1}{\tau^{2}} \E \left[ \|  \hat{\bX}^{f} -  \bX \|^{2}_{F} \right]$, while for Gamma distributed measurements with known shape parameter $L > 0$, it follows that
\begin{align*}
\MSE_{\eta}(\hat{\bX}^{f},\bX) = L^{2} \sum_{i=1}^{n} \sum_{j=1}^{m}  \left(   \frac{\bX_{ij} - \hat{\bX}^{f}_{ij}}{\bX_{ij} \hat{\bX}^{f}_{ij} }  \right)^2.
\end{align*}
The following proposition gives a SURE formula for the  MSE risk introduced in Definition \ref{def:MSErisk}.

\begin{prop} \label{prop:SURE-MSE}
Suppose that the data are sampled from a continuous exponential family. Assume that the function $h$, in the definition \eqref{eq:expfamcan} of the exponential family, is twice continuously differentiable on $\YY = \R$. If the following condition holds
\begin{equation}
\E \left[ \left| \hat{\btheta}^{f}_{ij}(\bY)  \right|  \right] < + \infty, \mbox{ for all } 1 \leq i \leq n, \; 1 \leq j \leq m, \label{eq:condGSURE}
\end{equation}
then, the quantity
\begin{equation}\label{eq:GSURE}
\GSURE( \hat{\btheta}^{f} )  = \|  \hat{\btheta}^{f}( \bY ) \|^{2} +  \sum_{i=1}^{n} \sum_{j=1}^{m} \left( 2 \frac{h'(\bY_{ij})}{h(\bY_{ij})} \hat{\btheta}^{f}_{ij}(\bY) + \frac{h''(\bY_{ij})}{h(\bY_{ij})} \right) + 2 \div \hat{\btheta}^{f}( \bY ),
\end{equation}
where
$\displaystyle
 \div \hat{\btheta}^{f}( \bY ) =  \sum_{i=1}^{n} \sum_{j=1}^{m}  \frac{\partial \hat{\btheta}^{f}_{ij}( \bY ) }{ \partial \bY_{ij}},
$
is an unbiased estimator of $\MSE(\hat{\btheta}^{f}, \btheta)$
\end{prop}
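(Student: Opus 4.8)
The plan is to start from the expansion
\[
\MSE(\hat{\btheta}^{f}, \btheta) = \E\bigl[\| \hat{\btheta}^{f} \|_{F}^{2}\bigr] - 2\,\E\bigl[\langle \hat{\btheta}^{f}, \btheta\rangle_{F}\bigr] + \| \btheta \|_{F}^{2},
\]
and to replace the two rightmost terms, which depend on the unknown matrix $\btheta$, by statistics whose expectations match them. Both are obtained from an integration-by-parts (Stein-type) identity tailored to the canonical form \eqref{eq:expfamcan}: since $\partial_{y} p(y;\theta) = \bigl(h'(y)/h(y) + \theta\bigr) p(y;\theta)$, one has $\theta\, p(y;\theta) = \partial_{y} p(y;\theta) - \tfrac{h'(y)}{h(y)} p(y;\theta)$, so that for a sufficiently regular scalar function $g$,
\[
\E_{\theta}\bigl[\theta\, g(Y)\bigr] = -\,\E_{\theta}\bigl[g'(Y)\bigr] - \E_{\theta}\Bigl[\tfrac{h'(Y)}{h(Y)} g(Y)\Bigr],
\]
the boundary terms at the ends of $\YY = \R$ (or of the support of $h$) vanishing.

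First I would handle the cross term. Fixing $(i,j)$, I apply the identity to $g = \hat{\btheta}^{f}_{ij}$ viewed as a function of $\bY_{ij}$ alone, the remaining entries being frozen, and then average over those entries using independence of the $\bY_{ij}$. This gives
\[
\E\bigl[\btheta_{ij}\, \hat{\btheta}^{f}_{ij}(\bY)\bigr] = -\,\E\Bigl[\tfrac{\partial \hat{\btheta}^{f}_{ij}(\bY)}{\partial \bY_{ij}}\Bigr] - \E\Bigl[\tfrac{h'(\bY_{ij})}{h(\bY_{ij})}\hat{\btheta}^{f}_{ij}(\bY)\Bigr],
\]
and summing over $(i,j)$ turns $-2\,\E[\langle \hat{\btheta}^{f},\btheta\rangle_{F}]$ into $2\,\E[\div \hat{\btheta}^{f}(\bY)] + 2\sum_{i,j}\E\bigl[\tfrac{h'(\bY_{ij})}{h(\bY_{ij})}\hat{\btheta}^{f}_{ij}(\bY)\bigr]$, i.e.\ exactly the first-order and $h'$-terms in \eqref{eq:GSURE}. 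For $\|\btheta\|_{F}^{2}$ I would apply the same identity twice: with $g \equiv 1$ it yields $\E[h'(Y)/h(Y)] = -\theta$, and then with $g = h'/h$ it yields $\theta^{2} = \E\bigl[(h'/h)'(Y)\bigr] + \E\bigl[(h'/h)^{2}(Y)\bigr] = \E[h''(Y)/h(Y)]$, the square terms cancelling after expanding $(h'/h)' = h''/h - (h'/h)^{2}$. Hence $\|\btheta\|_{F}^{2} = \sum_{i,j}\E[h''(\bY_{ij})/h(\bY_{ij})]$, which supplies the last term of \eqref{eq:GSURE}. Collecting the three contributions gives $\MSE(\hat{\btheta}^{f},\btheta) = \E[\GSURE(\hat{\btheta}^{f})]$.

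The main obstacle is the rigorous justification of these manipulations rather than the algebra: one must show that the boundary terms in each integration by parts vanish, that differentiation under the integral sign and Fubini's theorem are licit when one coordinate is peeled off at a time, and that the integrals defining the cross term and $\|\btheta\|_{F}^{2}$ converge. This is precisely where the standing hypotheses enter --- $h \in C^{2}(\R)$ and the integrability condition \eqref{eq:condGSURE} on $\hat{\btheta}^{f}_{ij}(\bY)$ --- together with the (implicit) requirement that each $\hat{\btheta}^{f}_{ij}$ be weakly differentiable in $\bY_{ij}$ with locally integrable partial derivative, so that the identity above holds in the absolutely continuous sense (as needed for non-smooth weights such as those coming from soft-thresholding). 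For the explicit families considered in the paper (Gaussian, Gamma) the required decay of $p(y;\theta)$ against $g$, $h'/h$ and $h''/h$ at the boundary is straightforward to check; I would either verify it under a general tail assumption or restrict attention to those cases.
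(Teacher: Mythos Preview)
Your proposal is correct and follows essentially the same route as the paper: expand $\MSE$ into a square term, a cross term, and $\|\btheta\|_F^2$; handle the cross term with the Stein-type integration-by-parts identity (isolated in the paper as a separate lemma, with the same hypothesis \eqref{eq:condGSURE}); and identify each $\btheta_{ij}^2$ with $\E[h''(\bY_{ij})/h(\bY_{ij})]$ via two integrations by parts. The only cosmetic difference is that the paper computes $\E[h''/h]=\btheta_{ij}^2$ by integrating $\int h''(y)\exp(\btheta_{ij}y-A(\btheta_{ij}))\,\mathrm{d}y$ by parts twice directly, whereas you obtain it by applying the Stein identity first with $g\equiv 1$ and then with $g=h'/h$; both arguments are equivalent.
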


Note that $\GSURE( \hat{\btheta}^{f} )$ is an unbiased estimator of $\MSE(\hat{\btheta}^{f}, \btheta)$ and not of  $\MSE(\hat{\bX}^{f},\bX)$. It is shown in Section \ref{sec:SUREGauss} that the results of Proposition \ref{eq:condGSURE} coincide with the approach in \cite{MR3105401} on the derivation of a SURE formula in the case of Gaussian noise for smooth spectral estimators.
In the case of Gamma noise, assuming $L > 2$ implies that the conditions on the function $h$ in Proposition \ref{prop:SURE-MSE} is satisfied,
hence assuming that conditions \eqref{eq:condGSURE} holds as well, and using that
\begin{align*}
 \hat{\btheta}^{f}_{ij}(\bY) = -\frac{L}{ f_{ij}(\bY)}
 \quad
 \text{and}
 \quad
 \frac{\partial \hat{\btheta}^{f}_{ij}(\bY) }{ \partial \bY_{ij}} = \frac{L}{| f_{ij}(\bY) |^2} \frac{\partial f_{ij}(\bY) }{ \partial \bY_{ij}},
\end{align*}
it follows that
\begin{align}\label{eq:GSUREexp}
  \GSURE( \hat{\btheta}^{f} ) &= \sum_{i=1}^{n} \sum_{j=1}^{m} \frac{L^2}{ |f_{ij}(\bY)|^2 } - \frac{2 L(L-1)}{\bY_{ij} f_{ij}(\bY)}  + \frac{ 2 L  }{| f_{ij}(\bY) |^2}\frac{\partial f_{ij}\left(\bY\right) }{ \partial \bY_{ij}} - \frac{(L-1)(L-2)}{ |\bY_{ij}|^2 }.
\end{align}

\subsubsection{Unbiased estimation of  KL risks}

Following the terminology in \cite{Deledalle}, let us now introduce two different notions of Kullback-Leibler risk, which arise from the non-symmetry of this discrepancy measure.

\begin{defin}  \label{def:KLrisk}
{\it
Let $f : \R^{n \times m} \to \R^{n \times m}$ be a smooth spectral function. Consider the estimator $\hat{\btheta}^{f}$ defined  by \eqref{eq:hatZf}, where $\bY$ is a matrix whose entries $\bY_{ij}$  are independent random variables sampled from the exponential family \eqref{eq:expfamcan} in canonical form:
\begin{description}
\item[$\bullet$] the  Kullback-Leibler synthesis (KLS) risk of $\hat{\btheta}^{f}$ is defined as
\begin{align*}
\KLS(\hat{\btheta}^{f}, \btheta) = \sum_{i=1}^{n} \sum_{j=1}^{m}  \int_{\R} \log\left( \frac{p(y ;  \hat{\btheta}^{f}_{ij}  )}{ p(y ; \btheta_{ij}) } \right) p(y ; \hat{\btheta}^{f}_{ij} ) \;\mathrm{d} y,
\end{align*}
and the mean KLS risk of $\hat{\btheta}^{f}$ is defined as
$
\MKLS(\hat{\btheta}^{f}, \btheta) = \E \left[ \KLS(\hat{\btheta}^{f}, \btheta) \right],
$
\item[$\bullet$] the   Kullback-Leibler analysis (KLA) risk    of $\hat{\btheta}^{f}$ is defined as
\begin{align*}
\KLA(\hat{\btheta}^{f}, \btheta) = \sum_{i=1}^{n} \sum_{j=1}^{m} \int_{\R} \log\left( \frac{p(y ;   \btheta_{ij}  )}{ p(y ; \hat{\btheta}^{f}_{ij}) } \right) p(y ; \btheta_{ij} ) \;\mathrm{d} y,
\end{align*}
and the mean KLA risk    of $\hat{\btheta}^{f}$ is defined as
$
\MKLA(\hat{\btheta}^{f}, \btheta) =  \E \left[ \KLA(\hat{\btheta}^{f}, \btheta) \right].
$
\end{description}
}
\end{defin}

A key advantage of the Kullback-Leibler risk is that it measures the discrepancy between  the unknown distribution  $p(y ;   \btheta_{ij} )$ and its estimate $p(y ; \hat{\btheta}^{f}_{ij})$. It is thus invariant with respect to the reparametrization $\hat{\btheta}^{f} = \eta(\hat{\bX}^{f})$ (unlike the MSE risk),   and we may also write $\MKLS(\hat{\btheta}^{f}, \btheta) = \MKLS(\hat{\bX}^{f}, \bX)$ and $\MKLA(\hat{\btheta}^{f}, \btheta) = \MKLA(\hat{\bX}^{f}, \bX)$. As suggested in  \cite{Deledalle}, the MKLA risk represents how well the distribution $p(y ; \hat{\btheta}^{f}_{ij})$ explain a random variable $\bY_{ij}$ sampled from the pdf $p(y ;   \btheta_{ij} )$. The MKLA risk is a natural loss function in many statistical problems since it takes as a reference measure the true distribution of the data, see e.g.\ \cite{MR913570}. The MKLS risk represents how well one may generate an independent copy of $\bY_{ij}$ by sampling a random variable from the pdf $p(y ; \hat{\btheta}^{f}_{ij})$. The MKLS risk has also been considered in various inference problems in statistics \cite{MR2181981,MR1272745}.

By simple calculation, it follows that
\begin{align} \label{eq:MKLS}
\MKLS(\hat{\btheta}^{f}, \btheta) &=  \sum_{i=1}^{n} \sum_{j=1}^{m}    \E \left[  \left(\hat{\btheta}^{f}_{ij}  - \btheta_{ij} \right)   A'( \hat{\btheta}^{f}_{ij})  \right] +  A(\btheta_{ij}) - \E  \left[ A(\hat{\btheta}^{f}_{ij}) \right],
\\
\text{and} \quad
\label{eq:MKLA}
\MKLA(\hat{\btheta}^{f}, \btheta) &=  \sum_{i=1}^{n} \sum_{j=1}^{m}    \E \left[  \left(\btheta_{ij}  -   \hat{\btheta}^{f}_{ij}\right)   A'( \btheta_{ij})  \right] +   \E  \left[ A( \hat{\btheta}^{f}_{ij} )  \right] -   A( \btheta_{ij} ).
\end{align}
Hence,   in the case of Gaussian measurements with known variance $\tau^2$, we easily retrieve that
$
\MKLS(\hat{\btheta}^{f}, \btheta) = \MKLA(\hat{\btheta}^{f}, \btheta) =   \frac{\tau^{2}}{2}  \MSE(\hat{\btheta}^{f}, \btheta)  = \frac{1}{2 \tau^2} \E \left[ \| \hat{\bX}^{f} - \bX \|^{2}_{F} \right].
$
In the case of Gamma distributed measurements with known shape parameter $L > 0$, it follows that
\begin{eqnarray*}
\MKLS(\hat{\btheta}^{f}, \btheta) & = &     L \sum_{i=1}^{n} \sum_{j=1}^{m}  \E \left[ \frac{\hat{\bX}^{f}_{ij}}{\bX_{ij}} - \log \left(  \frac{\hat{\bX}^{f}_{ij}}{\bX_{ij}}  \right) -1\right]    , \\
\MKLA(\hat{\btheta}^{f}, \btheta) & = &  L \sum_{i=1}^{n} \sum_{j=1}^{m}  \E \left[ \frac{\bX_{ij}}{\hat{\bX}^{f}_{ij}} - \log \left(  \frac{\bX_{ij}}{\hat{\bX}^{f}_{ij}}  \right) -1\right].
\end{eqnarray*}
Below, we use some of the results  in \cite{Deledalle} whose main contributions are the derivation of new unbiased estimators of the MKLS and MKLA risks. For continuous exponential family, the  risk estimate derived in \cite{Deledalle} is unbiased for the MKLS risk, while it is only asymptotically unbiased for  the MKLA risk with respect to the signal-to-noise ratio.  For data sampled from a continuous exponential family, this makes simpler the use of  the MKLS risk to derive data-driven shinkage in low rank matrix denoising, and we have therefore chosen to concentrate our study on this risk in this setting.  The following proposition establishes a SURE formula to estimate  the  MKLS risk in the continuous case.

\begin{prop} \label{prop:SURE-MKLS}
Suppose that the data are sampled from a continuous exponential family. Assume that the function $h$, in the definition \eqref{eq:expfamcan} of the exponential family, is continuously differentiable on $\YY = \R$. Suppose that the function $A$, in the definition \eqref{eq:expfamcan} of the exponential family, is twice continuously differentiable on $\Theta$. If the following condition holds
\begin{equation}
\E \left[ \left| A'( \hat{\btheta}^{f}_{ij}(\bY) ) \right| \right] < + \infty, \mbox{ for all } 1 \leq i \leq n, \; 1 \leq j \leq m, \label{eq:condSUKLS}
\end{equation}
then, the quantity
\begin{equation}\label{eq:SUKLS}
\SUKLS(\hat{\btheta}^{f}) = \sum_{i=1}^{n} \sum_{j=1}^{m}  \left( \left( \hat{\btheta}^{f}_{ij}(\bY) +  \frac{h'(\bY_{ij})}{h(\bY_{ij})} \right) A'(  \hat{\btheta}^{f}_{ij}(\bY) ) - A(\hat{\btheta}^{f}_{ij}(\bY) )   \right)  + \div f ( \bY ),
\end{equation}
where
$\displaystyle
\div f ( \bY ) = \sum_{i = 1}^{m} \sum_{j = 1}^{n}  \frac{\partial f_{ij}(\bY) }{ \partial \bY_{ij}},
$
is an unbiased estimator of $\displaystyle \MKLS(\hat{\btheta}^{f}, \btheta) - \sum_{i=1}^{n} \sum_{j=1}^{m} A(\btheta_{ij})$. 
\end{prop}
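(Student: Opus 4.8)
The plan is to start from the closed-form expression \eqref{eq:MKLS} for the MKLS risk and to show that, after subtracting $\sum_{i,j} A(\btheta_{ij})$, every term still involving the unknown $\btheta$ becomes observable via a generalized Stein identity. Subtracting $\sum_{i,j} A(\btheta_{ij})$ from \eqref{eq:MKLS} gives
\[
\MKLS(\hat{\btheta}^{f},\btheta) - \sum_{i,j} A(\btheta_{ij}) = \sum_{i,j} \E\left[ \hat{\btheta}^{f}_{ij} A'(\hat{\btheta}^{f}_{ij}) - A(\hat{\btheta}^{f}_{ij}) \right] - \sum_{i,j} \E\left[ \btheta_{ij}\, A'(\hat{\btheta}^{f}_{ij}) \right].
\]
By Assumption \ref{hyp:link} one has $A'(\hat{\btheta}^{f}_{ij}) = A'(\eta(\hat{\bX}^{f}_{ij})) = \hat{\bX}^{f}_{ij} = f_{ij}(\bY)$, so comparing the right-hand side above with the definition \eqref{eq:SUKLS} of $\SUKLS(\hat{\btheta}^{f})$ shows that the proposition is equivalent to the identity
\[
\sum_{i,j} \E\left[ \left( \btheta_{ij} + \frac{h'(\bY_{ij})}{h(\bY_{ij})} \right) f_{ij}(\bY) + \frac{\partial f_{ij}(\bY)}{\partial \bY_{ij}} \right] = 0 .
\]

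The second step is to establish this identity one coordinate at a time. Fixing $(i,j)$ and conditioning on all the other entries of $\bY$, which are independent of $\bY_{ij}$, the variable $\bY_{ij}$ has the canonical density $p(y;\btheta_{ij}) = h(y)\exp(\btheta_{ij} y - A(\btheta_{ij}))$ and $y \mapsto f_{ij}(\bY)$ is a scalar function $g$ of $y=\bY_{ij}$. Since $\tfrac{\mathrm{d}}{\mathrm{d}y}\log p(y;\btheta_{ij}) = h'(y)/h(y) + \btheta_{ij}$, we get $\tfrac{\mathrm{d}}{\mathrm{d}y} p(y;\btheta_{ij}) = \bigl(h'(y)/h(y) + \btheta_{ij}\bigr) p(y;\btheta_{ij})$, and integration by parts yields
\[
\int_{\R} g'(y)\, p(y;\btheta_{ij})\,\mathrm{d}y = \Bigl[ g(y)\, p(y;\btheta_{ij}) \Bigr]_{-\infty}^{+\infty} - \int_{\R} g(y) \left( \frac{h'(y)}{h(y)} + \btheta_{ij} \right) p(y;\btheta_{ij})\,\mathrm{d}y .
\]
Once the boundary term is seen to vanish, this is precisely the per-coordinate version of the required identity; summing over $(i,j)$ and taking the expectation over the remaining coordinates (Fubini) completes the argument.

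The main obstacle is analytic rather than algebraic: one must justify that the boundary term $\bigl[g(y)\,p(y;\btheta_{ij})\bigr]_{\pm\infty}$ vanishes and that the repeated appeals to Fubini's theorem are legitimate. This is where the hypotheses enter: $h \in C^1$ and $A \in C^2$ supply the differentiability needed for the integration by parts, the spectral structure of $f$ (in particular the $1$-Lipschitz continuity of singular-value maps and the resulting at most linear growth of $g$) controls the behaviour of $g$ at infinity, and the integrability condition \eqref{eq:condSUKLS}, namely $\E[|A'(\hat{\btheta}^{f}_{ij})|] = \E[|f_{ij}(\bY)|] < +\infty$, ensures all the integrals are finite and the boundary contributions decay. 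This is the continuous-exponential-family instance of the SURE-type arguments of \cite{hudson1978,Eldar09,Deledalle}; an alternative would be to invoke directly the MKLS risk estimator of \cite{Deledalle} specialised to $\hat{\bX}^{f}$, but I would prefer to present the self-contained integration-by-parts proof above.
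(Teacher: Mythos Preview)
Your proposal is correct and follows essentially the same approach as the paper: both start from the MKLS expression \eqref{eq:MKLS}, isolate the term $\E[\btheta_{ij}\,A'(\hat{\btheta}^{f}_{ij})]$, and replace it by an observable quantity via the integration-by-parts identity of Lemma~\ref{lem:GSURE}. The only cosmetic difference is the order of operations: you first invoke Assumption~\ref{hyp:link} to rewrite $A'(\hat{\btheta}^{f}_{ij})=f_{ij}(\bY)$ and then apply the Stein identity directly to $f_{ij}$, whereas the paper applies Lemma~\ref{lem:GSURE} to $F_{ij}=A'(\hat{\btheta}^{f}_{ij})$, obtains a derivative term $A''(\hat{\btheta}^{f}_{ij})\,\partial\hat{\btheta}^{f}_{ij}/\partial\bY_{ij}$, and only afterwards uses Assumption~\ref{hyp:link} and the chain rule to recognise this as $\partial f_{ij}/\partial\bY_{ij}$.
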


A key difference in the formula of unbiased estimates for the MSE and the KL risks is the computation of the divergence term in \eqref{eq:GSURE} and \eqref{eq:SUKLS}, when $\hat{\bX}^{f} = \sum_{k = 1}^{\min(n,m)} f_{k} ( \tilde{\sigma}_{k} ) \tilde{\bu}_{k} \tilde{\bv}_{k}^{t}$ is a smooth spectral estimator in the sense where each function $f_{k} : \R_{+} \to \R_{+}$ is assumed to be (almost everywhere) differentiable for $1 \leq k \leq \min(n,m)$. In this setting, the divergence term in the expression of $\GSURE( \hat{\btheta}^{f} )$ depends upon the matrix $\hat{\btheta}^{f}(\bY) = \eta(\hat{\bX}^{f})$. Therefore, when $\eta$ is a nonlinear mapping, it is generally not possible to obtain a simpler expression for $ \div \hat{\btheta}^{f}( \bY )$. To the contrary, for $\SUKLS( \hat{\btheta}^{f} )$, the divergence term is $\div f ( \bY )$ which has the following closed-form expression for any smooth spectral estimators
\begin{equation}
 \div f ( \bY ) = |m - n| \sum_{k = 1}^{\min(n,m)} \frac{f_{k}(\tilde{\sigma}_{k})}{\tilde{\sigma}_{k}} + \sum_{k = 1}^{\min(n,m)}f_{k}'(\tilde{\sigma}_{k}) + 2 \sum_{k = 1}^{\min(n,m)} f_{k}(\tilde{\sigma}_{k}) \sum_{\ell =1 ; \ell \neq k}^{\min(n,m)} \frac{\tilde{\sigma}_{k}}{\tilde{\sigma}_{k}^{2} - \tilde{\sigma}_{\ell}^{2} }~, \label{eq:divf}
\end{equation}
thanks to the results from Theorem IV.3 in \cite{MR3105401}.

Note that $\SUKLS( \hat{\bX}^{r}_{w} ) = \frac{\tau^2}{2} \SURE( \hat{\bX}^{r}_{w} )$ for Gaussian measurements, hence, the $\GSURE$ and $\SUKLS$ strategies match in this case.
In the case of Gamma measurements, assuming that $L > 2$ implies that the conditions on the function $h$ in Proposition \ref{prop:SURE-MKLS} is satisfied, and by
assuming that condition \eqref{eq:condSUKLS} holds as well,
it follows that
\begin{align*}
\SUKLS(\hat{\btheta}^{f}) &= \sum_{i=1}^{n} \sum_{j=1}^{m}  \left(  (L-1) \frac{ f_{ij}(\bY)  }{\bY_{ij}} - L \log \left( f_{ij}(\bY) \right)   \right) - L m n  + \div f ( \bY ),
\end{align*}
where the expression of $\div f ( \bY )$ is given by \eqref{eq:divf}.

Note that it is implicitly understood in the definition of $\div f ( \bY )$ that each mapping $f_{ij} : \R^{n \times m} \to \R$ is differentiable. The differentiability of the spectral function $f$ (and thus of its components $f_{ij}$) is a consequence of the assumption that the functions  $f_{1},\ldots,f_{\min(n,m)}$ (acting on the singular values) are supposed to be differentiable. For further details, on the differentiability of $f$ and the $f_{ij}$'s, we refer to Section IV in \cite{MR3105401}. From the arguments in \cite{MR3105401}, it  follows  that formula \eqref{eq:divf} for the divergence of $f$ is also valid under the assumption that each function $f_{k}$ is  differentiable on $\R_{+}$ except on a set of Lebesgue measure zero.

\subsection{The case of Poisson data}


For Poisson data, the key result to obtain unbiased estimate of a given risk is the following lemma which dates back to the work in \cite{hudson1978}.

\begin{lem} \label{lem:hudson}
Let $f : \Z^{n \times m} \to \R^{n \times m}$ be a measurable mapping. Let $1 \leq i \leq n$ and $1 \leq j \leq m$, and denote by $f_{ij} : \Z^{n \times m} \to \R$ a measurable function. Let $\bY \in \Z^{n \times m}$ be a matrix whose entries are independently sampled from a Poisson distribution on $\Z$.  Then,
\begin{align*}
\E \left[  \sum_{i=1}^{n} \sum_{j=1}^{m} \bX_{ij} f_{ij}(\bY) \right] = \E \left[   \sum_{i=1}^{n} \sum_{j=1}^{m} \bY_{ij} f_{ij}(\bY - \be_{i} \be_{j}^{t})  \right],
\end{align*}
where,  for each $1 \leq i \leq n$ and $1 \leq j \leq m$, $f_{ij}(\bY)$ denotes the $(i,j)$-th entry of the matrix $f(\bY)$, and $\be_{i}$ (resp.\ $\be_{j}$) denotes the vector of $\Z^{n}$ (resp.\ $\Z^{m}$) with the $i$-th entry (resp.\ $j$-th entry) equals to one and all others equal to zero.
\end{lem}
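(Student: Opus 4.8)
The plan is to reduce the statement to the classical scalar Hudson identity for a single Poisson random variable, and then lift it to matrices entry by entry by conditioning on the other coordinates.

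First I would recall (or re-prove in one line) the scalar fact: if $N$ is a Poisson random variable with mean $\mu > 0$ and $g : \Z \to \R$ is any measurable function such that $\E[N |g(N)|] < \infty$, then $\E[\mu\, g(N)] = \E[N\, g(N-1)]$. This follows from the definition of the Poisson mass function $\P(N = k) = e^{-\mu} \mu^k / k!$, since
\begin{align*}
\E[\mu\, g(N)] = \sum_{k \geq 0} g(k)\, \mu\, e^{-\mu} \frac{\mu^k}{k!} = \sum_{k \geq 0} g(k)\, e^{-\mu} \frac{\mu^{k+1}}{k!} = \sum_{k \geq 1} g(k-1)\, e^{-\mu} \frac{\mu^{k}}{(k-1)!} = \sum_{k \geq 1} k\, g(k-1)\, e^{-\mu} \frac{\mu^{k}}{k!},
\end{align*}
which is exactly $\E[N\, g(N-1)]$ (the $k = 0$ term carries a factor $k = 0$ and may be added freely). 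Absolute convergence under the stated moment hypothesis justifies the reindexing.

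Then I would fix a pair $(i,j)$ and apply this to the $(i,j)$-th summand. Write $\bY = (\bY_{ij}, \bY_{-(ij)})$ where $\bY_{-(ij)}$ collects all entries other than the $(i,j)$-th; by independence, conditionally on $\bY_{-(ij)} = y_{-(ij)}$ the variable $\bY_{ij}$ is Poisson with mean $\bX_{ij}$, and the function $k \mapsto f_{ij}\big((k, y_{-(ij)})\big)$ is a deterministic measurable function of the single integer $k$. Applying the scalar identity to this conditional law gives
\begin{align*}
\E\big[\bX_{ij}\, f_{ij}(\bY) \,\big|\, \bY_{-(ij)}\big] = \E\big[\bY_{ij}\, f_{ij}(\bY - \be_i \be_j^t)\,\big|\, \bY_{-(ij)}\big],
\end{align*}
since subtracting $\be_i \be_j^t$ decreases exactly the $(i,j)$-th entry by one and leaves the others fixed. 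Taking expectations over $\bY_{-(ij)}$ removes the conditioning, and summing over all $1 \leq i \leq n$, $1 \leq j \leq m$ yields the claimed identity.

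The only genuine subtlety — and the step I would treat with care rather than as routine — is the integrability bookkeeping needed to apply Fubini/Tonelli when passing from the conditional identity to the unconditional one and when interchanging the finite sum with the expectation. One must check that $\E[\bX_{ij}|f_{ij}(\bY)|]$ (equivalently $\E[\bY_{ij}|f_{ij}(\bY - \be_i \be_j^t)|]$, which equals it by the positive-function version of the same computation) is finite so that the conditional expectations are well defined and the exchange of summation and expectation is legitimate; this is implicit in the way the lemma will be used (the candidate estimators $f$ are built so that the relevant moments exist). Since the outer sum is finite, no further uniformity is required, and the argument closes.
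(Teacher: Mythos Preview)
Your proof is correct and follows the standard route: reduce to the scalar Hudson identity by a one-line reindexing of the Poisson series, then lift to the matrix case by conditioning on the remaining entries and summing. The paper itself does not prove this lemma; it simply states it and attributes it to Hudson~\cite{hudson1978}, so there is no alternative argument to compare against. Your treatment of the integrability caveat is appropriate and matches how the lemma is actually invoked later (e.g.\ in the proof of Proposition~\ref{prop:SURE-MKLA}).
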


Hudson's lemma provides a way to estimate (in an unbiased way) the expectation of the Frobenius inner product between the matrix $\bX$ and the matrix $f(\bY)$. To see the usefulness of this result, one may consider the following mean-squared error
\begin{align*}
{\MSE}(\hat{\bX}^{f},\bX)  =  \E \left[ \left\| \hat{\bX}^{f}  - \bX \right\|^{2}_{F} \right] =
  \E \left[ \left\| \hat{\bX}^{f}   \right\|^{2}_{F} - 2 \sum_{i=1}^{n} \sum_{j=1}^{m} \bX_{ij} \hat{\bX}^{f}_{ij}(\bY)  +  \left\|  \bX \right\|^{2}_{F} \right].
\end{align*}
Therefore, by Lemma \ref{lem:hudson}, one immediately obtains that
\begin{equation} \label{eq:PURE}
\PURE(\hat{\btheta}^{f}) = \left\| \hat{\bX}^{f} \right\|^{2}_{F} -2  \sum_{i=1}^{n} \sum_{j=1}^{m}  \bY_{ij} f_{ij}(\bY - \be_{i} \be_{j}^{t}),
\end{equation}
is an unbiased estimate for the quantity ${\MSE}(\hat{\bX}^{f},\bX) -  \left\| \bX \right\|^{2}_{F}$.

For Poisson data, one may also define the following KL risks
\begin{eqnarray}
\MKLS(\hat{\btheta}^{f}, \btheta) & = & \sum_{i=1}^{n} \sum_{j=1}^{m}  \E \left[\bX_{ij} - \hat{\bX}^{f}_{ij} -   \hat{\bX}^{f}_{ij} \log \left( \frac{\bX_{ij}}{\hat{\bX}^{f}_{ij}}  \right)  \right], \label{eq:MKLSPoisson} \\
\MKLA(\hat{\btheta}^{f}, \btheta) & = &   \sum_{i=1}^{n} \sum_{j=1}^{m}  \E \left[ \hat{\bX}^{f}_{ij} - \bX_{ij} -  \bX_{ij} \log \left( \frac{\hat{\bX}^{f}_{ij}}{\bX_{ij}}  \right)  \right], \label{eq:MKLAPoisson}
\end{eqnarray}
which are in agreement with Definition \ref{def:KLrisk} of KL risks for data sampled from a Poisson distribution. From the arguments in \cite{Deledalle}, there does not currently exist an approach to derive a SURE formula for the MKLS risk in the Poisson case since they are no unbiased formula for
$\hat{\bX}^{f}_{ij} \log \bX_{ij}$.
Nevertheless, as shown  in   \cite{Deledalle},
Hudson's Lemma \ref{lem:hudson} provides an unbiased estimator for
$\bX_{ij} \log \hat{\bX}^{f}_{ij}$,
and then it is possible to unbiasedly estimate  the MKLA risk   as follows.
\begin{prop} \label{prop:SURE-MKLA}
For data sampled from a Poisson distribution, the quantity
\begin{equation}\label{eq:PUKLA}
\PUKLA(\hat{\btheta}^{f}) =  \sum_{i=1}^{n} \sum_{j=1}^{m} \hat{\bX}^{f}_{ij} - \bY_{ij} \log\left( f_{ij}(\bY - \be_{i} \be_{j}^{t}) \right),
\end{equation}
is an unbiased estimator of $\displaystyle \MKLA(\hat{\btheta}^{f}, \btheta) +  \sum_{i=1}^{n} \sum_{j=1}^{m}   \bX_{ij} -  \bX_{ij} \log \left( \bX_{ij}  \right)$.
\end{prop}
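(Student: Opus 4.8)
The plan is to verify directly that $\E[\PUKLA(\hat{\btheta}^{f})]$ equals the claimed quantity by expanding the MKLA risk \eqref{eq:MKLAPoisson} and applying Hudson's Lemma \ref{lem:hudson} to the single problematic term. First I would write out
$$
\MKLA(\hat{\btheta}^{f}, \btheta) = \sum_{i=1}^{n} \sum_{j=1}^{m} \E\left[ \hat{\bX}^{f}_{ij} \right] - \bX_{ij} - \bX_{ij} \log \hat{\bX}^{f}_{ij} + \bX_{ij}\log \bX_{ij},
$$
so that, after moving the deterministic terms $\sum_{ij}\bX_{ij} - \bX_{ij}\log\bX_{ij}$ to the other side, the target becomes $\sum_{ij}\E[\hat{\bX}^{f}_{ij}] - \E[\bX_{ij}\log\hat{\bX}^{f}_{ij}]$. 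The first summand needs no manipulation since $\PUKLA$ contains $\hat{\bX}^{f}_{ij}$ verbatim. For the second summand, I would apply Lemma \ref{lem:hudson} with the measurable choice $f_{ij}(\bY) := \log\bigl(\hat{\bX}^{f}_{ij}(\bY)\bigr) = \log\bigl(f_{ij}(\bY)\bigr)$ (using that $\hat{\bX}^{f}_{ij}$ is a measurable function of the integer matrix $\bY$ and is positive), which gives
$$
\E\left[ \sum_{i=1}^{n}\sum_{j=1}^{m} \bX_{ij}\log\bigl(f_{ij}(\bY)\bigr) \right] = \E\left[ \sum_{i=1}^{n}\sum_{j=1}^{m} \bY_{ij}\log\bigl(f_{ij}(\bY - \be_{i}\be_{j}^{t})\bigr) \right].
$$
Substituting this into the expression and comparing with \eqref{eq:PUKLA} yields $\E[\PUKLA(\hat{\btheta}^{f})] = \MKLA(\hat{\btheta}^{f}, \btheta) + \sum_{ij}\bX_{ij} - \bX_{ij}\log\bX_{ij}$, as claimed.

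The main point requiring care is the applicability of Hudson's lemma to $f_{ij} = \log(f_{ij})$: one must check measurability (immediate, since $\bY$ ranges over the countable set $\Z^{n\times m}$ and $\log$ is continuous on $(0,\infty)$), and, implicitly, the integrability needed for the identity and for $\MKLA$ to be finite — the statement of Lemma \ref{lem:hudson} as given only requires measurability, so I would simply invoke it as stated, perhaps noting that one needs $\hat{\bX}^{f}_{ij} > 0$ for the logarithm to be defined (which is guaranteed by the assumption in \eqref{eq:hatXf} that the $f_k(\bY)$ are real positive values, hence the $\hat{\bX}^{f}_{ij}$ should be interpreted on the support where they are positive, or one restricts to spectral estimators with this property). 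The only genuine subtlety is the index shift $\bY - \be_i\be_j^t$ inside the logarithm: since $\bY_{ij}$ is Poisson, $\bY_{ij} \geq 1$ whenever the corresponding term $\bY_{ij}\log(\cdot)$ is nonzero, so $\bY - \be_i\be_j^t$ still has nonnegative integer entries and lies in the domain $\Z^{n\times m}$ of $f$, keeping everything well-defined.

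I do not anticipate any serious obstacle beyond this bookkeeping; the proof is essentially a one-line application of Lemma \ref{lem:hudson} to the term $\bX_{ij}\log\hat{\bX}^{f}_{ij}$, exactly parallel to how \eqref{eq:PURE} was obtained from the cross term $\bX_{ij}\hat{\bX}^{f}_{ij}$ in the MSE expansion. The structure of the argument mirrors that sketch verbatim, differing only in that the relevant function fed to Hudson's lemma is $\log(f_{ij})$ rather than $f_{ij}$ itself.
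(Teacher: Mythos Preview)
Your proposal is correct and follows essentially the same approach as the paper: expand the MKLA expression \eqref{eq:MKLAPoisson}, isolate the target $\sum_{ij}\E[\hat{\bX}^{f}_{ij}] - \E[\bX_{ij}\log\hat{\bX}^{f}_{ij}]$, and apply Hudson's Lemma~\ref{lem:hudson} with the choice $F_{ij}(\bY)=\log\bigl(\hat{\bX}^{f}_{ij}\bigr)$ to convert the cross term into $\bY_{ij}\log f_{ij}(\bY-\be_i\be_j^{t})$. The extra bookkeeping you add (positivity, measurability, nonnegativity of $\bY-\be_i\be_j^{t}$ on the support where $\bY_{ij}\geq 1$) is not made explicit in the paper but is a welcome clarification; the only cosmetic point is to avoid overloading the symbol $f_{ij}$ for both the spectral-estimator entry and the function fed to Hudson's lemma.
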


\subsection{Data-driven shrinkage in low-rank matrix denoising} \label{sec:optexp}

\label{sec:optexp_rank1}

For a matrix $\bX$ with entries $\bX_{ij} \in \mathcal{X} = \mathbb{R}$,
we consider shrinkage estimators of the form
\begin{align} \label{eq:hatXw1}
  &\hat{\bX}^{s}_{w} = f ( \bY ) = \sum_{k \in s}  w_{k} \tilde{\sigma}_{k} \tilde{\bu}_{k} \tilde{\bv}_{k}^{t},
\end{align}
with $s \subseteq \mathcal{I} = \{1,2,\ldots,\min(n, m)\}$ and
$w_{k} \in [0, 1]$, for all $k \in s$.

When the underlying matrix $\bX$ is constrained to have positive entries, e.g.\ $\mathcal{X} = ]0, +\infty[$ in the Gamma and Poisson cases, 
we consider instead estimators of the form
\begin{align} \label{eq:hatXw1_eps}
  \hat{\bX}^{s}_{w} = f ( \bY ) =
  \max\left[
    \sum_{k \in s}  w_{k} \tilde{\sigma}_{k} \tilde{\bu}_{k} \tilde{\bv}_{k}^{t},
    \varepsilon
    \right],
\end{align}
where $\varepsilon > 0$ is an {\it a priori} lower bound on the smallest value of $\bX_{ij}$, where for any matrix $\bX$, $\max[\bX, \varepsilon]_{ij} = \max[\bX_{ij}, \varepsilon]$, for all $1 \leq i \leq m$ and $1 \leq j \leq n$.

The construction of the subset $s$ is postponed to Section \ref{sec:bulk_exp_fam},
and we focus here in selecting the weights in a data-driven way
for a fixed given $s$.
In the following, we denote by $s^c$ the complementary set of $s$ in $\mathcal{I}$,
{\it i.e.}, $s^c = \mathcal{I} \;\backslash\; s$,
and we let $\hat{\btheta}^{s}_{w} = \eta\left( \hat{\bX}^{s}_{w} \right)$. When  $\mathcal{X} = ]0, +\infty[$, we have found that considering estimators of the form \eqref{eq:hatXw1_eps} is more appropriate than trying to find shrinking weights $(w_{k})_{k \in s}$ such that all the entries of the matrix $\sum_{k \in s}  w_{k} \tilde{\sigma}_{k} \tilde{\bu}_{k} \tilde{\bv}_{k}^{t}$ are positive,  for a given subset $s$.

\subsubsection*{Gaussian noise with known homoscedastic variance $\tau^{2}$}

By applying the GSURE formula \eqref{eq:GSURE} for Gaussian distributed measurements and thanks to the expression \eqref{eq:divf} for the divergence of smooth spectral estimators, we obtain for $\hat{\bX}^{s}_{w}$, as defined in \eqref{eq:hatXw1}, the SURE expression given by
\begin{gather*}
  \hspace{-2em}
  \SURE ( \hat{\bX}^{s}_{w} ) = - m n \tau^2 + \sum_{k \in s}  (w_{k}-1)^2  \tilde{\sigma}_{k}^2  +  \sum_{k \in s^c}  \tilde{\sigma}_{k}^2
  + 2 \tau^2  \sum_{k = 1}^{s} \left(  1+ |m - n|  +  2 \sum_{\ell =1 ; \ell \neq k}^{\min(n,m)} \frac{ \tilde{\sigma}_{k}^{2}}{\tilde{\sigma}_{k}^{2} - \tilde{\sigma}_{\ell}^{2} } \right) w_{k}
\end{gather*}
which unbiasedly estimate $\MSE(\hat{\bX}^{s}_{w}, \bX)$.
Hence, for each $k \in s$, by differentiating the above expression with respect to $w_{k}$, it follows that a data-driven weight for the $k$-th empirical singular value is given by
\begin{equation}
w_{k}(\bY) = \left( 1 - \frac{\tau^2}{ \tilde{\sigma}_{k}^{2}}  \left(  1+ |m - n| +  2    \sum_{\ell =1 ; \ell \neq k}^{\min(n,m)} \frac{ \tilde{\sigma}_{k}^{2}}{\tilde{\sigma}_{k}^{2} - \tilde{\sigma}_{\ell}^{2} } \right) \right)_{+}, \label{eq:optGaussSURE}
\end{equation}
which fullfils the requirement that $w_k(\bY) \in [0, 1]$.
Note that as $\SUKLS( \hat{\bX}^{s}_{w} ) = \frac{\tau^2}{2} \SURE( \hat{\bX}^{s}_{w} )$ for Gaussian measurements, the exact same data-driven weight would be obtained by minimizing an estimate of the $\MKLS(\hat{\bX}^{s}_{w}, \bX)$.

\bigskip
{\it The case of estimators with rank one.}
Consider the case of estimators with rank $1$, {\it i.e.}, let $s = \{ 1 \}$.
It follows that
$\hat \bX^1_w = \hat \bX^{\{1\}}_w = w_1 \hat \bX^1$ where $w_1 \in [0, 1]$ is given by
\begin{equation*}
w_{1}(\bY) = \left( 1 - \frac{\tau^2}{ \tilde{\sigma}_{1}^{2}}  \left(  1+ |m - n| +  2    \sum_{\ell =1 ; \ell \neq 1}^{\min(n,m)} \frac{ \tilde{\sigma}_{1}^{2}}{\tilde{\sigma}_{1}^{2} - \tilde{\sigma}_{\ell}^{2} } \right) \right)_{+}.
\end{equation*}

\subsubsection*{Gamma and Poisson distributed measurements}

In Gamma and Poisson cases, it is not possible to follow the same strategy as in the Gaussian case to derive optimal weights for \eqref{eq:hatXw1_eps} in a closed-form using the established SURE-like formulas.
We shall investigate how data-driven shinkage can be approximated in Section \ref{sec:num} on numerical experiments using fast algorithms.
Nevertheless, when the estimator is restricted to rank $1$, optimizing KL risk estimators lead to closed-form expressions under the assumption that all the entries of the data matrix $\bY$ are strictly positive.

\bigskip

{\it The case of estimators with rank one under Gamma noise.}
Consider again the case of estimators with rank $1$, {\it i.e.}, let $s = \{ 1 \}$,
and let $\hat \bX^1 = \tilde{\sigma}_{1} \tilde{\bu}_{1} \tilde{\bv}_{1}^{t}$
denote the PCA approximation of rank 1 of $\bX$.
If all the entries of the matrix $\bY$ are strictly positive, by the Perron-Frobenius theorem, all the entries of the first singular vectors  $\tilde{\bu}_{1}$ and $\tilde{\bv}_{1}$ are strictly positive. Therefore, all the entries of $\hat{\bX}^{1}$ belong to the set $\XX = ]0,+\infty[$, and
we can consider $\hat \bX^1_w = \hat \bX^{\{1\}}_w = w_1 \tilde{\sigma}_{1} \tilde{\bu}_{1} \tilde{\bv}_{1}^{t}$ as defined in \eqref{eq:hatXw1} instead of \eqref{eq:hatXw1_eps}.
Assuming $L > 2$ for the $\SUKLS$ formula to hold, it follows by simple calculations that
\begin{align*}
\SUKLS(\hat{\btheta}^{1}_{w})   =    \sum_{i=1}^{n} \sum_{j=1}^{m}  (L-1)   w_{1} \frac{\hat \bX^1_{ij}}{\bY_{ij} }  - m n L  &  \log \left( w_{1} \right)   - L   \log \left(  \frac{\hat \bX^1_{ij}}{\bY_{ij} } \right)  - L m n \\
&  + (1 + |m - n| ) w_{1} +  2 w_{1} \sum_{\ell =2  }^{\min(n,m)} \frac{\tilde{\sigma}_{1}^{2}}{\tilde{\sigma}_{1}^{2} - \tilde{\sigma}_{\ell}^{2} }.
\end{align*}
Hence, by differentiating the above expression with respect to $w_{1}$
and as it is monotonic on both sides of its unique minimum,
the optimal value of $w_{1} \in [0, 1]$ minimizing $\SUKLS(\hat{\btheta}^{1}_{w}) $
is given by
\begin{align*}
w_{1}(\bY) =  \min \left[ 1, \left(    \frac{L - 1}{L m n } \sum_{i=1}^{n} \sum_{j=1}^{m}   \frac{\hat \bX^1_{ij}}{\bY_{ij} } + \frac{1}{L m n} \left(1 + |m - n| + 2   \sum_{\ell =2  }^{\min(n,m)} \frac{\tilde{\sigma}_{1}^{2}}{\tilde{\sigma}_{1}^{2} - \tilde{\sigma}_{\ell}^{2} } \right) \right)^{-1} \right],
\end{align*}
which yields the  shrinking rule \eqref{eq:optgamma} stated in the introduction of this paper.
Note that it is not possible to obtain, in a closed-form, the optimal value of the weight $w_{1}$ that minimizes the criterion $\GSURE( \hat{\btheta}^{1}_{w})$.

\bigskip

{\it The case of estimators with rank one under Poisson noise.}
Using again that all the assumption that the entries of $\bY$ are positive, 
we can consider (by the Perron-Frobenius theorem) $\hat \bX^1_w = \hat \bX^{\{1\}}_w = w_1 \tilde{\sigma}_{1} \tilde{\bu}_{1} \tilde{\bv}_{1}^{t}$ as defined in \eqref{eq:hatXw1} instead of \eqref{eq:hatXw1_eps}.
Then, the PURE formula \eqref{eq:PURE} and Proposition \ref{prop:SURE-MKLA} apply to the estimator $\hat{\btheta}^{1}_{w} = \log\left( \hat{\bX}^{1}_{w} \right)$ which yield to
\begin{align*}
&\PURE(\hat{\btheta}^{1}_{w}) = w_{1}^{2} \tilde{\sigma}_{1}^{2} -2  \sum_{i=1}^{n} \sum_{j=1}^{m}  \bY_{ij} w_{1}  \tilde{\sigma}_{1}^{(ij)} \tilde{\bu}_{1,i}^{(ij)} \tilde{\bv}_{1,j}^{(ij)},\\
\text{and} \quad &
\PUKLA(\hat{\btheta}^{1}_{w}) =  \sum_{i=1}^{n} \sum_{j=1}^{m} w_{1} \hat \bX^1_{ij} - \bY_{ij} \left( \log\left( w_{1}  \right) +  \log\left( \tilde{\sigma}_{1}^{(ij)} \tilde{\bu}_{1,i}^{(ij)} \tilde{\bv}_{1,j}^{(ij)}  \right) \right),
\end{align*}
where $\hat \bX^1_{ij} = \tilde{\sigma}_1 \tilde{\bu}_{1,i} \tilde{\bv}_{1,j}$,  $\tilde{\sigma}_{1}^{(ij)}$ is the largest singular value of the matrix $\bY - \be_{i} \be_{j}^{t}$, and $\tilde{\bu}_{1}^{(ij)}$ (resp.\ $\tilde{\bv}_{1}^{(ij)}$) denotes its left (resp.\ right) singular vectors. Therefore,
by differentiating the above expression with respect to $w_{1}$
and as it is monotonic on both sides of its unique minimum,
an optimal value for $w_1 \in [0, 1]$ which minimizes $\PURE(\hat{\btheta}^{1}_{w})$ is given by
\begin{align*}
w_{1}(\bY) = \min\left[1, \frac{1}{\tilde{\sigma}_{1}^{2}} \sum_{i=1}^{n} \sum_{j=1}^{m}  \bY_{ij}   \tilde{\sigma}_{1}^{(ij)} \tilde{\bu}_{1,i}^{(ij)} \tilde{\bv}_{1,j}^{(ij)}\right].
\end{align*}
However, this optimal shrinking rule cannot be used in practice since evaluating the values of $\tilde{\sigma}_{1}^{(ij)},\tilde{\bu}_{1}^{(ij)},\tilde{\bv}_{1}^{(ij)}$ for all $1 \leq i \leq n$ and $1 \leq j \leq m$ is not feasible from a computational point of view for large values of $n$ and $m$. Nevertheless, a fast algorithm to find a numerical approximation of the optimal value ${w}_{1}(\bY)$ is proposed  in Section \ref{sec:num}.

 To the contrary, using again that all the $\hat \bX^1_{ij}$ are positive by the Perron-Frobenius theorem, the value of $w_1 \in [0, 1]$ minimizing $\PUKLA(\hat{\btheta}^{1}_{w})$ is
\begin{align*}
w_{1}(\bY) = \min\left[1, \frac{ \sum_{i=1}^{n} \sum_{j=1}^{m}  \bY_{ij} }{  \sum_{i=1}^{n} \sum_{j=1}^{m} \hat \bX^1_{ij} } \right],
\end{align*}
which is straightforward to compute. This corresponds to the  shrinkage rule \eqref{eq:optpoisson} given in the introduction.

\section{Gaussian spiked population model} \label{sec:gauss}

In this section, we restrict our analysis to the Gaussian spiked population model and the asymptotic setting introduced in Definition \ref{def:spiked}.

%

\subsection{Asymptotic location of empirical singular values}
\label{sec:asymptotic_sv}

We summarize below the asymptotic behavior of the singular values of  the data matrix
$
\bY =  \sum_{k = 1}^{\min(n,m)} \tilde{\sigma}_{k} \tilde{\bu}_{k} \tilde{\bv}_{k}^{t}
$
in the Gaussian spiked population model.

In the case where $\bX = 0$, it is well known \cite{MR2760897,MR2567175} that the empirical distribution of the singular values of $\bY = \bW$ (with $\tau = \frac{1}{\sqrt{m}}$) converges, as $n \to + \infty$, to the quarter circle distribution if $c = 1$ and to its generalized version if $c < 1$. This distribution is supported on the compact interval $[c_{-},c_{+}]$ with
\begin{align*}
c_{\pm} = 1 \pm \sqrt{c}
\end{align*}
where $c_{+}$ is the so-called bulk (right) edge.

 When $\bX \neq 0$ has a low rank structure, the asymptotic behavior of the singular values of $\bY = \bX + \bW$  is also well understood \cite{Benaych-GeorgesN12,MR2322123,MR3054091}, and generalizations to noise matrix $\bW$ whose distribution is orthogonally invariant have also been recently considered in \cite{Benaych-GeorgesN12}. Below, we recall some of these results that will be needed  in this paper. To this end, let us introduce the real-valued function $\rho$ defined by
\begin{align*}
\rho\left( \sigma \right) = \sqrt{\frac{(1+\sigma^{2})(c+\sigma^{2})}{\sigma^{2}}} \mbox{ for any } \sigma > 0.
\end{align*}
Then, the following result holds (see e.g.\ Theorem 2.8 in  \cite{Benaych-GeorgesN12} and Proposition 9 in \cite{MR3054091}).
\begin{prop} \label{prop:sv}
Assume that $\bY  = \bX + \bW$ is a random matrix sampled from the Gaussian spiked population model with $\tau = \frac{1}{\sqrt{m}}$ and $\bX = \sum_{k = 1}^{r^{\ast}} \sigma_{k} \bu_{k} \bv_{k}^{t}$. Then, for any fixed $k \geq 1$, one has that, almost surely,
\begin{align*}
\lim_{n \to + \infty} \tilde{\sigma}_{k} =
\left\{
\begin{array}{cc}
\rho\left( \sigma_{k} \right) & \mbox{ if } k \leq r^{\ast} \mbox{ and }   \sigma_{k} > c^{1/4}, \\
c_{+} & \mbox{ otherwise.}
\end{array}
\right.
\end{align*}
Moreover,
\begin{align*}
\lim_{n \to + \infty} \tilde{\sigma}_{\min(n,m)} = c_{-}.
\end{align*}
\end{prop}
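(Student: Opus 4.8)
The plan is to obtain the proposition from the general theory of finite-rank additive perturbations of large random matrices, specialised to the rectangular Gaussian case; concretely, one invokes the framework of Benaych-Georges and Nadakuditi, whose rectangular result applies once the relevant transforms of the limiting noise spectrum are identified. The first step is to record two facts about the pure-noise matrix $\bW$ (i.i.d.\ $N(0,1/m)$ entries, $n/m\to c$): (i) its empirical singular value distribution converges almost surely to a measure $\mu_c$ supported on $[c_-,c_+]$ (the square root of a rescaled Marchenko--Pastur law), and (ii) for every fixed $j\ge 1$ one has $s_j(\bW)\to c_+$ and $s_{\min(n,m)-j}(\bW)\to c_-$ almost surely (edge rigidity / Bai--Yin), in particular $s_1(\bW)\to c_+$ and $s_{\min(n,m)}(\bW)\to c_-$.

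Granting (i)--(ii), the ``otherwise'' case and the last assertion follow by interlacing. Writing $\bX=\sum_{k\le r^\ast}\sigma_k\bu_k\bv_k^t$ as a sum of $r^\ast$ rank-one matrices and applying Weyl's inequalities $r^\ast$ times gives $s_{k+r^\ast}(\bW)\le\tilde\sigma_k\le s_{k-r^\ast}(\bW)$; hence for each fixed $k>r^\ast$ both sides tend to $c_+$, so $\tilde\sigma_k\to c_+$, and likewise $\tilde\sigma_{\min(n,m)}\le s_{\min(n,m)-r^\ast}(\bW)\to c_-$, while the matching lower bound $\tilde\sigma_{\min(n,m)}\ge c_- - o(1)$ holds because the additive spiked perturbation detaches no singular value below $[c_-,c_+]$ (a fact contained in the cited results). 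What remains is the behaviour of the top $r^\ast$ singular values $\tilde\sigma_k$, $k\le r^\ast$, which the same theory governs through a secular equation: for $z>c_+$, an outlier of $\bY$ associated with $\sigma_k$ exists iff $\sigma_k^2\,D_{\mu_c}(z)=1$, where $D_{\mu_c}(z)=\varphi_{\mu_c}(z)\,\widetilde\varphi_{\mu_c}(z)$ with $\varphi_{\mu_c}(z)=\int\frac{z}{z^2-t^2}\,d\mu_c(t)$ and $\widetilde\varphi_{\mu_c}$ its aspect-ratio-corrected companion (accounting for the $m-n$ extra null singular values); if no such $z$ exists the singular value sticks to the edge $c_+$.

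The computational core is then to evaluate $D_{\mu_c}$ explicitly. Since the Stieltjes transform of the Marchenko--Pastur law is algebraic, $\varphi_{\mu_c}$ and $\widetilde\varphi_{\mu_c}$ are available in closed form as roots of quadratics in $z$, and solving $\sigma^2 D_{\mu_c}(z)=1$ yields $z^2=(1+\sigma^2)(c+\sigma^2)/\sigma^2$, i.e.\ $z=\rho(\sigma)$. One then checks that the relevant root genuinely exceeds $c_+$ precisely when $1/\sigma^2<D_{\mu_c}(c_+^{\,+})=1/\sqrt c$, that is $\sigma>c^{1/4}$, and that $\rho(c^{1/4})=c_+$, so that the two regimes match continuously at the phase transition. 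Assembling: for $k\le r^\ast$ with $\sigma_k>c^{1/4}$, $\tilde\sigma_k\to\rho(\sigma_k)$; for $k\le r^\ast$ with $\sigma_k\le c^{1/4}$ (and for all $k>r^\ast$), $\tilde\sigma_k\to c_+$; and $\tilde\sigma_{\min(n,m)}\to c_-$.

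The step I expect to be the main obstacle is not the $D$-transform algebra (routine once the Marchenko--Pastur Stieltjes transform is written down) but the rigorous justification of the ``edge-sticking'' phenomena: convergence of the extreme bulk singular values of $\bW$ to $c_\pm$, the absence of outliers above $c_+$ for subcritical spikes $\sigma_k\le c^{1/4}$, and the absence of outliers below $c_-$. These rest on largest/smallest-singular-value results for Marchenko--Pastur ensembles together with a perturbation argument showing that a singular value can leave $[c_-,c_+]$ only by satisfying the secular equation. All of these ingredients are available in the references cited in the statement (Benaych-Georges--Nadakuditi, and the surrounding random matrix literature), so in practice I would assemble them rather than reprove them from scratch.
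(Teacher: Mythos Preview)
Your proposal is correct and in fact goes beyond what the paper does: the paper states this proposition without proof, simply citing Theorem~2.8 in Benaych-Georges--Nadakuditi and Proposition~9 in \cite{MR3054091} as its source. Your outline of how those results apply (Weyl interlacing for the bulk edges, the $D$-transform secular equation for the outliers, and the phase transition at $\sigma=c^{1/4}$) is exactly the content of those references specialised to the Gaussian rectangular case, so your approach and the paper's are the same.
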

In what follows, we shall also use the relation
\begin{equation}
\frac{1}{\sigma^{2}} = \frac{\rho^{2}(\sigma) -(c+1) - \sqrt{(\rho^{2}(\sigma)-(c+1))^2-4c} }{2 c} \mbox{ that holds for any } \sigma > c^{1/4}, \label{eq:link}
\end{equation}
which is a consequence of e.g.\  the results in Section 3.1 in \cite{Benaych-GeorgesN12}.


\subsection{Existing asymptotically optimal shrinkage rules}

Below, we briefly summarize some results in  \cite{GavishDonoho} and \cite{MR3200641} on the construction of asymptotically optimal spectral estimators. 
Let
\begin{equation}
\hat{\bX}^{f} = f ( \bY ) =  \sum_{k = 1}^{\min(n,m)} f_{k} ( \tilde{\sigma}_{k} ) \tilde{\bu}_{k} \tilde{\bv}_{k}^{t} \label{eq:smoothspectralestim}
\end{equation}
be a given smooth spectral estimator,  
and consider the standard squared error
$
\SE(\hat{\bX}^{f}, \bX) = \| \hat{\bX}^{f} - \bX \|^{2}_{F}
$
as a measure of risk. The set of spectral functions minimizing $\SE(\hat{\bX}^{f}, \bX)$ is given by $f_{k} ( \tilde{\sigma}_{k} ) = \tilde{\bu}_{k}^{t} \bX \tilde{\bv}_{k}$, for $1 \leq k \leq \min(n,m)$. However, it cannot be used in practice since $\bX$ is  obviously unknown. A first alternative suggested in  \cite{GavishDonoho} and \cite{MR3200641} is to rather study the asymptotic risk
\begin{equation}
\SE_{\infty}(\hat{\bX}^{f}) = \lim_{n \to \infty} \SE(\hat{\bX}^{f}, \bX) \mbox{ (in the almost sure sense)} \label{eq:SE}
\end{equation}
 in the Gaussian spiked population model. 
Then, it is proposed  in  \cite{GavishDonoho} and \cite{MR3200641}  to find an asymptotically optimal choice of $f$ by minimizing $ \SE_{\infty}(\hat{\bX}^{f}) $ among a given class of smooth spectral functions. The results in \cite{GavishDonoho} show that, among spectral estimators of the form
$
\hat{\bX}^{\eta} =   \sum_{k = 1}^{\min(n,m)}    \eta(\tilde{\sigma}_{k}) \tilde{\bu}_{k} \tilde{\bv}_{k}^{t},
$
where $\eta : \R_{+} \to \R_{+}$ is a continuous shrinker such that $\eta(\sigma) = 0$ whenever $\sigma \leq c_{+}$,  an asymptotically optimal shrinkage rule is given by the choice
\begin{equation} \label{eq:optshrinkGD}
\eta^{\ast}(\sigma) = \left\{
\begin{array}{cll}
\frac{1}{\sigma} \sqrt{\left(\sigma^2-(c+1) \right)^{2}-4c}& \mbox{ if } \sigma > c_{+}, \\
0 & \mbox{ otherwise} . \\
\end{array}\right.
\end{equation}
In \cite{MR3200641}, it is proposed to consider spectral estimators of the form
$
\hat{\bX}^{\delta} = \sum_{k = 1}^{r} \delta_{k}  \tilde{\bu}_{k} \tilde{\bv}_{k}^{t}
$
where  $\delta_1,\ldots,\delta_{r}$ are positive weights.  By Theorem 2.1 in \cite{MR3200641}, it follows that, if $\sigma_{k} > c^{1/4}$ for all $1 \leq k \leq r$ with $r \leq r^{\ast}$, then the weights which minimize $ \SE_{\infty}(\hat{\bX}^{\delta}) $ over $\R_{+}^{r}$ are given by
\begin{equation} \label{eq:optshrinkRao}
\delta_k^{\ast} = \delta_k(\sigma_{k}) =
\frac{\sigma_{k}^{4} - c}{\sigma_{k} \sqrt{(1+\sigma_{k}^{2})(c+\sigma_{k}^{2})}} ,  \mbox{ for all } 1 \leq k \leq r.
\end{equation}

 In what follows,  the shrinkage rules  \eqref{eq:optshrinkGD}  and \eqref{eq:optshrinkRao} are shown to be equivalent, and they will serve as a reference of asymptotic optimality. It should be stressed that the estimators in  \cite{GavishDonoho}  and \cite{MR3200641}  are not equivalent. Indeed, the method in \cite{MR3200641} requires an estimate of the rank, while the approach in \cite{GavishDonoho} applies the same  shrinker to all empirical singular values. Nevertheless, the shrinkage function that is applied to significant singular values (either above the bulk edge in  \cite{GavishDonoho} or up to a given rank in \cite{MR3200641}) is the same.

\subsection{Asymptotic behavior of data-driven estimators based on SURE} \label{sec:SUREGauss}

Following the principle of SURE, a second alternative to choose a smooth spectral estimator of the form \eqref{eq:smoothspectralestim} is to study the problem of selecting a set of functions $(f_{k})_{1 \leq k \leq \min(n,m)}$ that minimize an unbiased estimate of
$
\MSE(\hat{\bX}^{f}, \bX) = \E \left[ \| \hat{\bX}^{f} - \bX \|^{2}_{F} \right].
$
For any $1 \leq i \leq m$ and $1 \leq j \leq n$, we recall that $f_{ij}(\bY)$ denotes the $(i,j)$-th entry of the matrix $\hat{\bX}^{f} = f ( \bY )$.  Under the  condition that
\begin{equation}
\E \left[ \left| \bY_{ij}  f_{ij}(\bY) \right| + \left| \frac{\partial f_{ij}(\bY) }{ \partial \bY_{ij}} \right|\right] < + \infty, \mbox{ for all } 1 \leq i \leq n, \; 1 \leq j \leq m. \label{eq:condSURE}
\end{equation}
it follows from the results in \cite{MR3105401} (or equivalently from Proposition \ref{prop:SURE-MSE} for Gaussian noise with $\tau^{2} = 1/m$) that
\begin{equation}
\SURE \left( \hat{\bX}^{f} \right) = - n + \|  f ( \bY )  - \bY \|^{2}_{F} + \frac{2}{m} \div f ( \bY ),
\label{eq:SURE}
\end{equation}
is an unbiased estimate of $\MSE(\hat{\bX}^{f}, \bX)$, where the divergence
$
\div f ( \bY )
$
admits the closed-form expression \eqref{eq:divf}. The SURE formula \eqref{eq:SURE} has been used in \cite{MR3105401}  to find a data-driven value for $\lambda = \lambda(\bY)$ in the the case of singular values shrinkage by soft-thresholding  which corresponds to the choice
\begin{align*}
f_{k}(\tilde{\sigma}_{k}) = (\tilde{\sigma}_{k} - \lambda)_{+}, \mbox{ for all } 1 \leq k \leq \min(n,m).
\end{align*}

We study now the asymptotic behavior of the SURE formula \eqref{eq:SURE}. To this end, we shall use Proposition \ref{prop:sv}, but  we will also need the following result (whose proof can be found in the Appendix) to study some of the terms in expression \eqref{eq:divf} of the divergence of $f(\bY)$.

\begin{prop} \label{prop:sti}
Assume that $\bY  = \bX + \bW$ is a random matrix sampled from the Gaussian spiked population model with $\tau = \frac{1}{\sqrt{m}}$ and $\bX = \sum_{k = 1}^{r^{\ast}} \sigma_{k} \bu_{k} \bv_{k}^{t}$. Then, for any fixed $1 \leq k \leq r^{\ast}$ such that $\sigma_{k} > c^{1/4}$, one has that, almost surely,
\begin{align*}
\lim_{n \to + \infty} \frac{1}{n}  \sum_{\ell =1 ; \ell \neq k}^{n} \frac{\tilde{\sigma}_{k}}{\tilde{\sigma}_{k}^{2} - \tilde{\sigma}_{\ell}^{2} } = \frac{1}{\rho\left( \sigma_{k} \right) } \left( 1 + \frac{1}{\sigma_{k}^{2}} \right).
\end{align*}
\end{prop}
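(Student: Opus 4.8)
The plan is to split the sum $\frac{1}{n}\sum_{\ell\neq k}\frac{\tilde\sigma_k}{\tilde\sigma_k^2-\tilde\sigma_\ell^2}$ into the contribution of the finitely many spiked indices $\ell\le r^\ast$ (with $\ell\neq k$) and the contribution of the bulk indices $\ell>r^\ast$. Since $r^\ast$ is fixed, the spiked part is a fixed finite sum, each term of which is $O(1/n)$ after dividing by $n$ (because for $\ell\neq k$ with $\sigma_\ell\neq\sigma_k$, Proposition \ref{prop:sv} gives $\tilde\sigma_k\to\rho(\sigma_k)$ and $\tilde\sigma_\ell\to\rho(\sigma_\ell)\neq\rho(\sigma_k)$ — using strict monotonicity of $\rho$ above $c^{1/4}$ — so the denominator stays bounded away from zero); hence the spiked part vanishes in the limit. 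The whole limit therefore comes from the bulk.

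For the bulk part, I would use the fact that the empirical distribution of the bulk singular values $\tilde\sigma_\ell$, $\ell>r^\ast$ (equivalently, of all but finitely many of the $\tilde\sigma_\ell$), converges weakly almost surely to the Mar\v{c}enko--Pastur-type law $\mu_c$ supported on $[c_-,c_+]$. Writing $z=\tilde\sigma_k\to\rho(\sigma_k)=:\rho$, which lies strictly to the right of $c_+$ by Proposition \ref{prop:sv}, the sum becomes a Stieltjes-type functional:
\begin{align*}
\frac{1}{n}\sum_{\ell>r^\ast}\frac{\tilde\sigma_k}{\tilde\sigma_k^2-\tilde\sigma_\ell^2}
\;\longrightarrow\;
\int \frac{\rho}{\rho^2-t^2}\,\mathrm{d}\mu_c(t)
= \frac{\rho}{2}\int\left(\frac{1}{\rho-t}+\frac{1}{\rho+t}\right)\mathrm{d}\mu_c(t),
\end{align*}
where the convergence is justified because $t\mapsto \rho/(\rho^2-t^2)$ is bounded and continuous on $[c_-,c_+]$ when $\rho>c_+$. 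The right-hand integral is exactly (a symmetrized combination of) the Stieltjes transform of $\mu_c$ evaluated at $\pm\rho$, for which closed-form expressions are classical in random matrix theory and are precisely the quantities that enter the derivation of the location map $\rho(\sigma)$ in Section 3.1 of \cite{Benaych-GeorgesN12}.

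The last step is the algebraic identification of this integral with $\frac{1}{\rho(\sigma_k)}\bigl(1+\frac{1}{\sigma_k^2}\bigr)$. Here I would invoke relation \eqref{eq:link}, which expresses $1/\sigma_k^2$ in terms of $\rho(\sigma_k)$, $c$, and the quantity $\sqrt{(\rho^2-(c+1))^2-4c}$ — note this is, up to the factor $1/\rho$, exactly $\eta^\ast(\rho)$ from \eqref{eq:optshrinkGD}. The functional equation characterizing $\rho$ in \cite{Benaych-GeorgesN12} states that the D-transform of $\mu_c$ at $\rho$ equals $1/\sigma_k^2$; the derivative of that transform (or an equivalent partial-fraction manipulation) yields $\int \rho/(\rho^2-t^2)\,\mathrm{d}\mu_c(t)$ in terms of the same radicals, and plugging in \eqref{eq:link} collapses it to $\frac{1}{\rho(\sigma_k)}(1+\frac{1}{\sigma_k^2})$.

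The main obstacle is the third step: correctly assembling the Stieltjes transforms of $\mu_c$ at $+\rho$ and $-\rho$ and matching them against the implicit characterization of $\rho(\sigma)$, since the bookkeeping with the square roots $\sqrt{(\rho^2-(c+1))^2-4c}$ and the sign conventions (which branch, and whether $c<1$ or $c=1$) is delicate. A clean way to sidestep part of it is to differentiate the defining relation of $\rho$: if $\Phi$ denotes the function with $\Phi(\rho(\sigma))=1/\sigma^2$, then $\Phi'(\rho)$ relates directly to $\int\rho/(\rho^2-t^2)\,\mathrm d\mu_c$, and one computes $\Phi'$ from the explicit formula for $\rho$ rather than from the integral — this trades a measure-theoretic computation for a one-variable derivative, and then \eqref{eq:link} finishes it.
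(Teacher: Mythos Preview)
Your strategy is essentially the paper's: reduce to the Stieltjes transform of the limiting Marchenko--Pastur distribution, evaluated at the limit of $\tilde\sigma_k$, and then simplify via relation \eqref{eq:link}. Two remarks, one a genuine gap and one a simplification.

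The gap is in your justification of the bulk convergence. Saying ``$t\mapsto\rho/(\rho^2-t^2)$ is bounded and continuous on $[c_-,c_+]$'' is not enough: weak convergence of the empirical bulk measure to $\mu_c$ only lets you pass to the limit for test functions that are bounded and continuous on all of $\R$, not merely on the limiting support. Your integrand blows up at $t=\tilde\sigma_k$, and weak convergence by itself does not rule out some bulk $\tilde\sigma_\ell$ sitting close to $\tilde\sigma_k$ for finite $n$. You must first establish a spectral gap: almost surely, for all large $n$, every $\tilde\sigma_\ell$ with $\ell\neq k$ stays outside a fixed neighborhood of $\rho(\sigma_k)$. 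The paper obtains this from Weyl's interlacing inequalities (sandwiching the $\tilde\sigma_\ell$ between singular values of $\bW$) together with the a.s.\ edge behavior $\sigma_1(\bW)\to c_+$, and then upgrades weak convergence to \emph{uniform} convergence of the Stieltjes transform on a compact neighborhood of $\rho^2(\sigma_k)$. That uniformity is exactly what lets you evaluate at the random, moving point $\tilde\sigma_k^2$ rather than at the fixed limit $\rho^2$; your sketch silently conflates the two.

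On the algebra: the partial-fraction route through $\frac{1}{\rho-t}+\frac{1}{\rho+t}$, the D-transform, and differentiating $\Phi$ is unnecessarily heavy. Pass to squared singular values $\tilde\lambda_\ell=\tilde\sigma_\ell^2$ from the start; then $\frac{1}{n}\sum_{\ell\neq k}\frac{1}{\tilde\sigma_k^2-\tilde\sigma_\ell^2}$ is \emph{literally} the Stieltjes transform of the empirical eigenvalue measure at $\tilde\lambda_k$, whose limit is $g_{MP}(\rho^2(\sigma_k))$ with the standard closed form $g_{MP}(z)=\bigl(z-(1-c)-\sqrt{(z-(c+1))^2-4c}\bigr)/(2cz)$. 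Writing $z-(1-c)=\bigl(z-(c+1)\bigr)+2c$ and invoking \eqref{eq:link} collapses this to $\frac{1}{\rho^2(\sigma_k)}\bigl(1+\frac{1}{\sigma_k^2}\bigr)$ in one line --- no branch bookkeeping at $-\rho$, no derivative trick. This is precisely what the paper does.
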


In what follows, we restrict  our analysis to the following class of spectral estimators (the terminology in the definition below is borrowed from \cite{GavishDonoho}).

\begin{defin}
Let $\hat{\bX}^{f} = f(\bY) =  \sum_{k = 1}^{\min(n,m)} f_{k} ( \tilde{\sigma}_{k} ) \tilde{\bu}_{k} \tilde{\bv}_{k}^{t}$ be a smooth spectral estimator. For a given $1 \leq r \leq \min(n,m)$, the  estimator $f$ is said to be a spectral shrinker of order $r$ that collapses the bulk to 0 if
\begin{align*}
\left\{\begin{array}{cl}
f_{k}(\sigma) = 0 & \mbox{ whenever }  \sigma \leq c_{+} \mbox{ and } 1 \leq k \leq r,\\
f_{k}(\sigma)  =  0 & \mbox{ for all } \sigma \geq 0 \mbox{ and }  k > r.
\end{array}\right.
\end{align*}
\end{defin}

 The reason for restricting the study to spectral estimators such that $f_{k}(\tilde{\sigma}_{k}) = 0$ whenever $\tilde{\sigma}_{k} <   c_{+} $ is linked to the choice of the active set $s^{\ast}$ \eqref{eq:sastGauss} of singular values in the Gaussian case, as detailed in Section \ref{sec:bulk_exp_fam}. Now, for a spectral shrinker $\hat{\bX}^{f}$ of order $r$ that collapses the bulk to 0, we study the asymptotic behavior of the terms in expression \eqref{eq:SURE}  that only depend on $f$, namely
\begin{eqnarray}
\overline{\SURE} \left( \hat{\bX}^{f} \right) & = & \sum_{k = 1}^{r} (  f_{k}( \tilde{\sigma}_{k})  - \tilde{\sigma}_{k} )^{2}  + 2  \left(1 - \frac{n}{m}\right) \sum_{k = 1}^{r} \frac{f_{k}(\tilde{\sigma}_{k})}{\tilde{\sigma}_{k}} + \frac{2}{m} \sum_{k = 1}^{r}f_{k}'(\tilde{\sigma}_{k}) \nonumber \\
& & +  4 \frac{n}{m} \sum_{k = 1}^{r} f_{k}(\tilde{\sigma}_{k}) \left( \frac{1}{n} \sum_{\ell =1 ; \ell \neq k}^{n} \frac{\tilde{\sigma}_{k}}{\tilde{\sigma}_{k}^{2} - \tilde{\sigma}_{\ell}^{2} } \right)  \label{eq:barSURE}
\end{eqnarray}

The reason for studying $\overline{\SURE} \left( \hat{\bX}^{f} \right) $ is that finding an optimal shrinkage rule that minimizes  $\SURE \left( \hat{\bX}^{f} \right)$  is equivalent to minimizing expression \eqref{eq:barSURE} over spectral shrinkers  of order $r$ that collapses the bulk to 0, since
$
\SURE \left( \hat{\bX}^{f} \right) - \overline{\SURE} \left( \hat{\bX}^{f} \right)  = - n +  \sum_{k = r+1}^{n}  \tilde{\sigma}_{k}^{2}
$
for such $ \hat{\bX}^{f}$.

Then, using Proposition \ref{prop:sv},  Proposition \ref{prop:sti}, and the assumption that  the $f_{k}$'s are continuously differentiable functions on $\R_{+}$, we immediately obtain the following result.

\begin{lem} \label{lem:limSURE}
Assume that $\bY  = \bX + \bW$ is a random matrix sampled from the Gaussian spiked population model with $\tau = \frac{1}{\sqrt{m}}$ and $\bX = \sum_{k = 1}^{r^{\ast}} \sigma_{k} \bu_{k} \bv_{k}^{t}$. Let $\hat{\bX}^{f}$ be a spectral shrinker  of order $r \leq r^{\ast}$  that collapses the bulk to 0, such that  each function $f_{k}$, for $1 \leq k \leq r$, is continuously differentiable on $]c_{+},+\infty[$. Moreover, assume that $\sigma_{k} > c^{1/4}$ for all $1 \leq k \leq r$. Then, one has that, almost surely,
\begin{equation}
\lim_{n \to + \infty} \overline{\SURE} \left( \hat{\bX}^{f} \right)  = \sum_{k = 1}^{r} (  f_{k}( \rho(\sigma_{k}))  - \rho(\sigma_{k}) )^{2}  + 2  f_{k}(\rho(\sigma_{k}))  \left( \frac{\sigma_{k}^{2}(1+c) + 2c}{\sigma_{k}^{2}\rho(\sigma_{k})}  \right)   \label{eq:limSURE}
\end{equation}
\end{lem}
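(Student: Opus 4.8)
The plan is to take the limit term by term in the expression \eqref{eq:barSURE} for $\overline{\SURE}(\hat{\bX}^f)$, which has four pieces, using the almost sure convergence of the empirical singular values (Proposition \ref{prop:sv}) and of the Stieltjes-type sum (Proposition \ref{prop:sti}), together with the continuity and continuous differentiability of the $f_k$'s on $]c_+,+\infty[$. Since $r\le r^\ast$ and each $\sigma_k>c^{1/4}$, Proposition \ref{prop:sv} gives $\tilde\sigma_k\to\rho(\sigma_k)$ almost surely for $1\le k\le r$, and $\rho(\sigma_k)>c_+$ (this strict inequality is exactly the content of the phase transition: $\sigma_k>c^{1/4}$ iff the $k$-th empirical singular value detaches from the bulk edge), so each $f_k$ and $f_k'$ is continuous at $\rho(\sigma_k)$ and the substitution is legitimate.

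Carrying this out: the first sum $\sum_{k=1}^r (f_k(\tilde\sigma_k)-\tilde\sigma_k)^2$ converges to $\sum_{k=1}^r (f_k(\rho(\sigma_k))-\rho(\sigma_k))^2$ by continuity. The second term $2(1-n/m)\sum_{k=1}^r f_k(\tilde\sigma_k)/\tilde\sigma_k$ converges to $2(1-c)\sum_{k=1}^r f_k(\rho(\sigma_k))/\rho(\sigma_k)$, since $n/m\to c$. The third term $\frac{2}{m}\sum_{k=1}^r f_k'(\tilde\sigma_k)$ vanishes in the limit because $r$ is fixed, $m\to\infty$, and $f_k'(\tilde\sigma_k)\to f_k'(\rho(\sigma_k))$ is bounded. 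The fourth term $4\frac{n}{m}\sum_{k=1}^r f_k(\tilde\sigma_k)\bigl(\frac1n\sum_{\ell\neq k}\frac{\tilde\sigma_k}{\tilde\sigma_k^2-\tilde\sigma_\ell^2}\bigr)$ converges, by Proposition \ref{prop:sti} and continuity of $f_k$, to $4c\sum_{k=1}^r f_k(\rho(\sigma_k))\cdot\frac{1}{\rho(\sigma_k)}\bigl(1+\frac{1}{\sigma_k^2}\bigr)$. Collecting the coefficients of $2f_k(\rho(\sigma_k))$ from the second and fourth terms gives
\[
(1-c)\frac{1}{\rho(\sigma_k)} + 2c\cdot\frac{1}{\rho(\sigma_k)}\Bigl(1+\frac{1}{\sigma_k^2}\Bigr)
= \frac{1}{\rho(\sigma_k)}\Bigl(1+c+\frac{2c}{\sigma_k^2}\Bigr)
= \frac{\sigma_k^2(1+c)+2c}{\sigma_k^2\rho(\sigma_k)},
\]
which is exactly the coefficient in \eqref{eq:limSURE}.

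The only genuinely delicate point is the fourth term, and more precisely the interchange of the limit with the finite sum over $k$ and the application of Proposition \ref{prop:sti}: that proposition is stated for a single fixed $k$, but since $r$ is fixed this is just finitely many simultaneous almost sure statements, and one must also note that for $\ell>r$ (or $\ell\le r^\ast$ with $\sigma_\ell\le c^{1/4}$, or $r<\ell\le r^\ast$) the tail singular values $\tilde\sigma_\ell$ all pile up near $c_+<\rho(\sigma_k)$, so no spurious near-singularity of $\frac{\tilde\sigma_k}{\tilde\sigma_k^2-\tilde\sigma_\ell^2}$ appears — this is implicit in Proposition \ref{prop:sti} and need not be reproved here. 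Everything else is continuity plus the fact that a $\frac{1}{m}$-weighted fixed-length sum vanishes; since all convergences invoked hold on a common almost sure event, the conclusion holds almost surely, completing the proof.
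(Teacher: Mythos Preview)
Your proof is correct and follows exactly the approach the paper takes: the paper simply remarks that the result follows immediately from Proposition~\ref{prop:sv}, Proposition~\ref{prop:sti}, and the continuous differentiability of the $f_k$'s, which is precisely the term-by-term limit argument you carry out in detail. Your explicit bookkeeping of the four pieces and the combination of the coefficients from the second and fourth terms matches the claimed limit \eqref{eq:limSURE}.
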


\noindent {\bf Asymptotically optimal shrinkage of singular values.}  Thanks to Lemma \ref{lem:limSURE}, one may determine an asymptotic optimal spectral shrinker as the one minimizing  $\lim_{n \to + \infty} \overline{\SURE} \left( \hat{\bX}^{f} \right)$. For this purpose, let us define the class of estimators
\begin{equation} \label{eq:hatXw}
\hat{\bX}^{r}_{w} =   \sum_{k = 1}^{r}    w_{k}  \tilde{\sigma}_{k}  \1_{\left\{ \tilde{\sigma}_{k} > c_{+}\right\}} \tilde{\bu}_{k} \tilde{\bv}_{k}^{t},
\end{equation}
where $1 \leq r \leq r^{\ast}$ is a given integer, and the $w_{k}$'s are positive weights. In practice, the estimator $\hat{\bX}^{r}_{w}$ is computed by replacing the bulk edge $c_+$ by its approximation $c_+^{n,m}  = 1 + \sqrt{\frac{n}{m}}$ in eq.~\eqref{eq:hatXw}. For moderate to large values of $n$ and $m$, the quantities $c_+$ and $c_+^{n,m}$ are very close, and this replacement does not change the numerical performances of $\hat{\bX}^{r}_{w}$.

Then, provided that $\sigma_{k} > c^{1/4}$ for all $1 \leq k \leq r$, it follows from Lemma \ref{lem:limSURE}   that
\begin{align*}
\lim_{n \to + \infty} \overline{\SURE} \left( \hat{\bX}^{r}_{w} \right)  = \sum_{k = 1}^{r} \rho^{2}(\sigma_{k})(  w_{k}  - 1 )^{2}  + 2  w_{k} \left( \frac{\sigma_{k}^{2}(1+c) + 2c}{\sigma_{k}^{2} }  \right).
\end{align*}
Differentiating the above  expression with respect to each weight $w_{k}$ leads to the following choice of asymptotically optimal weights
\begin{equation} \label{eq:wkast}
w_{k}^{\ast} = 1 -  \frac{\sigma_{k}^{2}(1+c) + 2c}{\sigma_{k}^{2} \rho^{2}(\sigma_{k})} \mbox{ for all } 1 \leq k \leq r.
\end{equation}
Therefore, if the singular values of the matrix $\bX$ to be estimated are sufficiently large (namely $\sigma_{k} > c^{1/4}$ for all $1 \leq k \leq r$), by using Proposition \ref{prop:sv} and eq.~\eqref{eq:wkast},  one has that an asymptotically optimal spectral shrinker of order $r \leq r^{\ast}$  is given by the  choice of functions
\begin{equation}
f_{k}^{\ast}(\rho(\sigma_{k}))   = \left\{
\begin{array}{cc}
 \left(  1 -  \frac{\sigma_{k}^{2}(1+c) + 2c}{\sigma_{k}^{2} \rho^{2}(\sigma_{k})} \right) \rho(\sigma_{k})  & \mbox{ if }  \rho(\sigma_{k})  > c_{+}, \\
0 & \mbox{ otherwise,}
\end{array}
\right.
\mbox{ for all } 1 \leq k \leq r. \label{eq:fast}
\end{equation}
Using, the relation \eqref{eq:link} one may also express the asymptotically optimal shrinking rule \eqref{eq:fast} either as a function of  $\rho(\sigma_{k})$ only,
\begin{equation}
\label{eq:fastrho}
f_{k}^{\ast}(\rho(\sigma_{k})) =
\left\{
\begin{array}{cc}
\frac{1}{\rho(\sigma_{k})} \sqrt{(\rho^{2}(\sigma_{k})-(c+1))^2-4c} & \mbox{ if }  \rho(\sigma_{k})  > c_{+}, \\
0 & \mbox{ otherwise.}
\end{array}
\right.
\end{equation}
or as  function  of $\sigma_{k}$
only (using that $\rho(\sigma_{k})  > c_{+}$ is equivalent to $\sigma_{k} > c^{1/4}$),
\begin{equation} \label{eq:fastsigma}
f_{k}^{\ast}(\rho(\sigma_{k})) =
\left\{
\begin{array}{cc}
 \frac{\sigma_{k}^{4} -c}{\sigma_{k} \sqrt{(1+\sigma_{k}^{2})(c+\sigma_{k}^{2})}} & \mbox{ if } \sigma_{k} > c^{1/4}, \\
0 & \mbox{ otherwise.}
\end{array}
\right.
\end{equation}

 Therefore, for spectral shrinker of order $r$, we remark that the shrinkage rule \eqref{eq:fastrho}   coincides with the rule  \eqref{eq:optshrinkGD} which has been obtained in \cite{GavishDonoho}.
 Similarly, when the quantity $f_{k}^{\ast}(\rho(\sigma_{k}))$   is expressed as a function of $\sigma_{k}$ only  in \eqref{eq:fastsigma},  then we retrieve the shrinking rule \eqref{eq:optshrinkRao} derived in  \cite{MR3200641}. Therefore, it appears that minimizing either  the asymptotic behavior of the $\SURE$, that is $\lim_{n \to + \infty} \overline{\SURE} \left( \hat{\bX}^{f} \right)$, or the limit of $\SE$ risk \eqref{eq:SE} leads to the same  choice of an asymptotically optimal spectral estimator. \\

\noindent{\bf Data-driven shrinkage of empirical singular values.} From the results in Section \ref{sec:optexp}, the principle of SURE minimisation leads to the following data-driven choice of spectral shrinker of order $r$ that collapses the bulk to 0
\begin{equation} \label{eq:hatXast}
\hat{\bX}^{r}_{w} =   \sum_{k = 1}^{r}    f_{k}(\tilde{\sigma}_{k}) \tilde{\bu}_{k} \tilde{\bv}_{k}^{t},
\end{equation}
where
$
 f_{k}(\tilde{\sigma}_{k})  = w_{k}(\bY) \tilde{\sigma}_{k} \1_{\left\{ \tilde{\sigma}_{k} > c_{+}\right\}}, \mbox{ for all } 1 \leq k \leq r,
$
with  $w_{k}(\bY)$ given by   \eqref{eq:optGaussSURE}.
From  Proposition \ref{prop:sv} and Proposition \ref{prop:sti} it follows that, if  $\sigma_{k} > c^{1/4}$,  then, almost surely,
\begin{align*}
\lim_{n \to + \infty}  f_{k}(\tilde{\sigma}_{k}) =  \left(  1 -  \frac{\sigma_{k}^{2}(1+c) + 2c}{\sigma_{k}^{2} \rho^{2}(\sigma_{k})} \right) \rho(\sigma_{k}), \mbox{ for all } 1 \leq k \leq r \leq r^{\ast}.
\end{align*}
Therefore, the data-driven spectral estimator $\hat{\bX}^{r}_{w}$ \eqref{eq:hatXast} asymptotically leads to the optimal shrinking rule of singular values given by  \eqref{eq:fast}  which has been obtained by minimizing the asymptotic behavior of the SURE.

Note that when $\tau \neq 1 /\sqrt{m}$, it suffices to replace the condition $\tilde{\sigma}_{k} > c_{+}$ by $\tilde{\sigma}_{k} > \tau (\sqrt{m} + \sqrt{n})$ in the definition of $\hat{\bX}^{r}_{w}$, which yields the  shrinking rule \eqref{eq:optGauss} stated in the introduction of this paper.

\section{Estimating active sets of singular values in exponential families}
\label{sec:bulk_exp_fam}

In this section, we propose to formulate a new Akaike information criterion (AIC) to select an appropriate set of singular values over which a shrinkage procedure might be applied. To this end, we shall consider the estimator
$
\tilde{\bX}^s = \sum_{k \in s} \tilde{\sigma}_k \tilde{\bu}_k \tilde{\bv}^t_k
$
defined for a subset $s \subseteq \mathcal{I} = \{1,2,\ldots,\min(n, m)\}$,
and we address the problem of selecting an optimal subset $s^\star$ from the data $\bY$.

In the case of Gaussian measurements, the shrinkage estimators that we use in our numerical experiments are of the form
$
\hat{\bX}^f = \sum_{k \in s^\star} f_{k}(\tilde{\sigma}_k) \tilde{\bu}_k \tilde{\bv}^t_k
$
where
\begin{align*}
s^\star = \{k \; ; \; \tilde{\sigma}_k > c_+^{n,m}  \} \mbox{ with } c_+^{n,m}  = 1 + \sqrt{\frac{n}{m}},
\end{align*}
for some (possibly data-dependent) shrinkage functions $f_{k}$. The set $s^\star$ 
is based on the knowledge of an approximation $c_+^{n,m}$ of the bulk edge $c_{+}$. Thanks to Proposition \ref{prop:sv}, the bulk edge  $c_{+}$ is interpreted as the threshold which allows to distinguish the locations of significant singular values in the data from those  due to the presence of additive noise. 
Interestingly, the following result shows that  the active set $s^\star$ may be interpreted through the prism of model selection using the minimisation of a penalized log-likelihood criterion.

\begin{prop}\label{prop:bulk_aic_gaussian}
  Assume that $\bY = \bX + \bW$ where the entries of $\bW$ are iid Gaussian variables with zero mean and standard deviation  $\tau = 1/\sqrt{m}$.  Then, we have 
\begin{equation} \label{eq:AICGauss}
 s^\ast =  \uargmin{s \subseteq \mathcal{I} } m \| \bY - \tilde{\bX}^{s} \|^{2}_{F} + 2 |s| p_{n,m} \mbox{ with } p_{n,m} =   \left(  \frac{1}{2} \left(\sqrt{m} + \sqrt{n}\right)^2 \right),
\end{equation}
where $\tilde{\bX}^{s} = \sum_{k \in s}  \tilde{\sigma}_k  \tilde{\bu}_k \tilde{\bv}^t_k$  for $s \in \mathcal{I} = \{1,2,\ldots,\min(n, m)\}$, and $|s|$ is the cardinal of  $s$.
\end{prop}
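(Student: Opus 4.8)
The plan is to unfold the objective function in \eqref{eq:AICGauss} and show that minimizing over all subsets $s$ decouples into independent decisions, one per index $k \in \mathcal{I}$. First I would observe that, since $\tilde{\bX}^s = \sum_{k\in s}\tilde\sigma_k\tilde\bu_k\tilde\bv_k^t$ is a partial reconstruction from the SVD of $\bY$ and the singular vectors $(\tilde\bu_k,\tilde\bv_k)$ are orthonormal, the Frobenius residual satisfies
\begin{equation*}
  \| \bY - \tilde{\bX}^{s} \|^{2}_{F} = \sum_{k \in \mathcal{I} \setminus s} \tilde\sigma_k^2 = \sum_{k\in\mathcal{I}}\tilde\sigma_k^2 - \sum_{k\in s}\tilde\sigma_k^2 .
\end{equation*}
Substituting this into the criterion, the quantity to be minimized becomes $m\sum_{k\in\mathcal{I}}\tilde\sigma_k^2 - m\sum_{k\in s}\tilde\sigma_k^2 + 2|s| p_{n,m}$. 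The first term does not depend on $s$, so the problem is equivalent to minimizing $\sum_{k\in s}\bigl(2 p_{n,m} - m\tilde\sigma_k^2\bigr)$, i.e. to
\begin{equation*}
  s^\ast \in \uargmin{s\subseteq\mathcal{I}} \ \sum_{k\in s}\bigl(2 p_{n,m} - m\tilde\sigma_k^2\bigr).
\end{equation*}

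Next I would note that this is a separable set-optimization: for each $k$ independently, including $k$ in $s$ is beneficial precisely when the corresponding summand is negative, i.e. when $2 p_{n,m} - m\tilde\sigma_k^2 < 0$. Plugging in $p_{n,m} = \tfrac12(\sqrt m + \sqrt n)^2$, the condition $2 p_{n,m} < m\tilde\sigma_k^2$ reads $(\sqrt m+\sqrt n)^2 < m\tilde\sigma_k^2$, equivalently $\tilde\sigma_k^2 > \bigl(\tfrac{\sqrt m+\sqrt n}{\sqrt m}\bigr)^2 = \bigl(1+\sqrt{n/m}\bigr)^2 = (c_+^{n,m})^2$, i.e. $\tilde\sigma_k > c_+^{n,m}$. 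Hence the minimizer is $s^\ast = \{k : \tilde\sigma_k > c_+^{n,m}\}$, as claimed. (When equality $\tilde\sigma_k = c_+^{n,m}$ occurs the summand is zero and $k$ may be included or excluded; I would either note this is a measure-zero event under the Gaussian model or simply adopt the convention of strict inequality, matching the statement.)

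There is essentially no hard analytic obstacle here — the whole argument is an algebraic rearrangement plus the observation that a sum over a freely chosen subset is minimized term-by-term. The one point deserving a sentence of care is the residual identity $\|\bY-\tilde\bX^s\|_F^2 = \sum_{k\notin s}\tilde\sigma_k^2$: it holds for \emph{any} subset $s$ (not only for $s$ of the form $\{1,\dots,r\}$) because $\bY - \tilde\bX^s = \sum_{k\notin s}\tilde\sigma_k\tilde\bu_k\tilde\bv_k^t$ is itself an orthogonal SVD-type expansion, so its squared Frobenius norm is the sum of the squares of the retained singular values. I would also remark, as the paper does in the surrounding text, that $c_+^{n,m} = 1+\sqrt{n/m}$ is exactly the finite-sample analogue of the bulk edge $c_+ = 1+\sqrt c$ appearing in Proposition \ref{prop:sv}, which is what gives the criterion its interpretation as an AIC-type model selection rule with penalty $2|s|p_{n,m}$ linked to the degrees of freedom.
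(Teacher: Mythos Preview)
Your proposal is correct and follows essentially the same route as the paper: both compute the residual $\|\bY-\tilde\bX^s\|_F^2=\sum_{k\notin s}\tilde\sigma_k^2$, observe that the criterion then decouples into an independent choice for each index $k$, and identify the threshold $\tilde\sigma_k>c_+^{n,m}$ from $2p_{n,m}=m(c_+^{n,m})^2$. Your version is slightly more explicit about the separability and the measure-zero boundary case, but the argument is the same.
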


\begin{proof}
We  remark that $ \bY - \tilde{\bX}^s  = \sum_{k \notin s} \tilde{\sigma}_k \tilde{\bu}_k \tilde{\bv}^t_k$.
  It results that
  \begin{align} \label{eq:opstar}
    m \norm{ \bY - \tilde{\bX}^s}^2_{F} + 2 |s| p_{n,m}
    =
    m \sum_{k \notin s} \tilde{\sigma}_k^2
    + 2 |s| p_{n,m}
    =
    \sum_{k=1}^{n} \left\{
    \begin{array}{ll}
      m \tilde{\sigma}_{k}^2 & \text{if } k \notin s\\
      2 p_{n,m} & \text{otherwise}
    \end{array}\right.
    .
  \end{align}
Using that  $\sqrt{2 p_{n,m} / m} = c_+^{n,m}$, it follows that the set $s^\star = \{k \; ; \; \tilde{\sigma}_k > c_+^{n,m}\}$ is by definition such that $k \in s^\star$ if and only if $2 p_{n,m} < m \tilde{\sigma}_k^2$. Therefore, by \eqref{eq:opstar}, the criterion $s \mapsto m \norm{ \bY - \tilde{\bX}^s}^2_{F} + 2 |s| p_{n,m}$ is minimum at $s = s^\star$ which
concludes the proof.
\end{proof}

In the model $\bY =  \bX + \bW$, where the entries of $\bW$ are iid  Gaussian variables with zero mean and variance $\tau^{2}$,  it is well known that the degrees of freedom (DOF)  of a given estimator $\hat{\bX}$  is defined as
\begin{align*}
\DOF( \hat{\bX} ) = \frac{1}{\tau^{2}} \sum_{i=1}^{n} \sum_{j=1}^{m} \cov ( \hat{\bX}_{ij} ,  \bY_{ij}) = \frac{1}{\tau^{2}} \sum_{i=1}^{n} \sum_{j=1}^{m} \E [ \hat{\bX}_{ij}  \bW_{ij}].
\end{align*}
The DOF is  widely used in statistics to define various criteria for model selection among a collection of estimators, see e.g.\ \cite{Efron04}. In low rank matrix denoising, the following proposition shows that it is possible to derive the asymptotic behavior of the DOF of spectral estimators.

\begin{prop} \label{prop:dof}
Assume that $\bY  = \bX + \bW$ is a random matrix sampled from the Gaussian spiked population model with $\tau = \frac{1}{\sqrt{m}}$ and $\bX = \sum_{k = 1}^{r^{\ast}} \sigma_{k} \bu_{k} \bv_{k}^{t}$.  Let $\hat{\bX}^{f}$ be a spectral shrinker  of order $r \leq r^{\ast}$  that collapses the bulk to 0, such that  each function $f_{k}$, for $1 \leq k \leq r$, is continuously differentiable on $]c_{+},+\infty[$. Moreover, assume that $\sigma_{k} > c^{1/4}$ for all $1 \leq k \leq r$. Then, one has that, almost surely,
\begin{align*}
\lim_{n \to + \infty}  \frac{1}{m} \DOF( \hat{\bX}^{f}) = \sum_{k = 1}^{r} \frac{f_{k}(\rho(\sigma_{k}))}{\rho(\sigma_{k})} \left(1 + c +    \frac{2 c }{\sigma_{k}^{2}} \right).
\end{align*}
\end{prop}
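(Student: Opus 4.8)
The plan is to combine Stein's identity for Gaussian noise with the closed-form divergence formula \eqref{eq:divf} and the random matrix asymptotics of Propositions \ref{prop:sv} and \ref{prop:sti}. First I would note that, since the $\bW_{ij}$ are iid Gaussian with variance $\tau^{2}=1/m$, an integration by parts (Stein's identity) gives, under the integrability conditions \eqref{eq:condSURE} assumed for the spectral shrinker $\hat{\bX}^{f}$,
\begin{equation*}
\DOF(\hat{\bX}^{f}) = \frac{1}{\tau^{2}}\sum_{i=1}^{n}\sum_{j=1}^{m} \E\bigl[\hat{\bX}^{f}_{ij}\bW_{ij}\bigr] = \E\bigl[\div f(\bY)\bigr],
\end{equation*}
so that $\frac{1}{m}\DOF(\hat{\bX}^{f}) = \frac{1}{m}\E[\div f(\bY)]$.

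Next, since $\hat{\bX}^{f}$ is a spectral shrinker of order $r$ that collapses the bulk to $0$, only the indices $k=1,\ldots,r$ contribute in \eqref{eq:divf}, and using $|m-n| = m-n$ (as $m\geq n$) one obtains
\begin{equation*}
\frac{1}{m}\div f(\bY) = \frac{m-n}{m}\sum_{k=1}^{r}\frac{f_{k}(\tilde{\sigma}_{k})}{\tilde{\sigma}_{k}} + \frac{1}{m}\sum_{k=1}^{r}f_{k}'(\tilde{\sigma}_{k}) + 2\,\frac{n}{m}\sum_{k=1}^{r}f_{k}(\tilde{\sigma}_{k})\left(\frac{1}{n}\sum_{\ell=1;\,\ell\neq k}^{n}\frac{\tilde{\sigma}_{k}}{\tilde{\sigma}_{k}^{2}-\tilde{\sigma}_{\ell}^{2}}\right).
\end{equation*}
I would then let $n\to+\infty$ term by term. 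By Proposition \ref{prop:sv}, for $1\leq k\leq r$ one has $\tilde{\sigma}_{k}\to\rho(\sigma_{k})>c_{+}$ almost surely (since $\sigma_{k}>c^{1/4}$), so the continuity of $f_{k}$ and $f_{k}'$ on $]c_{+},+\infty[$ yields $f_{k}(\tilde{\sigma}_{k})\to f_{k}(\rho(\sigma_{k}))$ and $f_{k}'(\tilde{\sigma}_{k})\to f_{k}'(\rho(\sigma_{k}))$. The middle term is a fixed finite sum divided by $m\to+\infty$, hence vanishes; $\frac{m-n}{m}\to 1-c$; and $\frac{n}{m}\to c$ while Proposition \ref{prop:sti} gives $\frac{1}{n}\sum_{\ell\neq k}\frac{\tilde{\sigma}_{k}}{\tilde{\sigma}_{k}^{2}-\tilde{\sigma}_{\ell}^{2}}\to\frac{1}{\rho(\sigma_{k})}\bigl(1+\frac{1}{\sigma_{k}^{2}}\bigr)$. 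Collecting the surviving contributions and using the algebraic identity $(1-c)+2c\bigl(1+\frac{1}{\sigma_{k}^{2}}\bigr)=1+c+\frac{2c}{\sigma_{k}^{2}}$ gives
\begin{equation*}
\lim_{n\to+\infty}\frac{1}{m}\div f(\bY) = \sum_{k=1}^{r}\frac{f_{k}(\rho(\sigma_{k}))}{\rho(\sigma_{k})}\left(1+c+\frac{2c}{\sigma_{k}^{2}}\right)\qquad\text{almost surely.}
\end{equation*}

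It then remains to transfer this almost sure limit to the deterministic quantity $\frac{1}{m}\DOF(\hat{\bX}^{f})=\frac{1}{m}\E[\div f(\bY)]$, i.e.\ to justify the interchange of limit and expectation. I would do this with a uniform integrability argument: the right-hand side of \eqref{eq:divf} is controlled by the extreme singular values $\tilde{\sigma}_{1},\tilde{\sigma}_{\min(n,m)}$ and by the gap terms $(\tilde{\sigma}_{k}^{2}-\tilde{\sigma}_{\ell}^{2})^{-1}$ for $k\leq r$, and classical moment bounds on the largest and smallest singular values of a Gaussian matrix, together with the asymptotic separation of the $r$ spiked singular values from each other and from the bulk (Proposition \ref{prop:sv}), keep $\frac{1}{m}\div f(\bY)$ bounded in $L^{2}$, which suffices. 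Controlling these near-singular gap terms in expectation rather than merely almost surely is the only delicate point; the rest is a direct substitution into \eqref{eq:divf} followed by the limits already established in Propositions \ref{prop:sv} and \ref{prop:sti}.
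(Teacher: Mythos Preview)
Your approach is essentially identical to the paper's: Stein's identity gives $\DOF(\hat{\bX}^{f})=\E[\div f(\bY)]$, then the closed-form divergence \eqref{eq:divf} is combined with Propositions \ref{prop:sv} and \ref{prop:sti} to obtain the almost sure limit of $\frac{1}{m}\div f(\bY)$. You are in fact more scrupulous than the paper, which stops at the almost sure convergence of $\frac{1}{m}\div f(\bY)$ and declares the proof complete without addressing the passage to $\frac{1}{m}\E[\div f(\bY)]$; your uniform integrability remark is precisely the step the paper leaves implicit.
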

\begin{proof}
Thanks to the derivation of the SURE in \cite{MR630098} and formula \eqref{eq:divf} on the divergence of spectral estimators, one has that
\begin{align*}
\DOF( \hat{\bX}^{f}) =  \E \left[ \div \hat{\bX}^{f} \right] = \E \left[ |m - n| \sum_{k = 1}^{r} \frac{f_{k}(\tilde{\sigma}_{k})}{\tilde{\sigma}_{k}} + \sum_{k = 1}^{r}f_{k}'(\tilde{\sigma}_{k}) + 2 \sum_{k = 1}^{r} f_{k}(\tilde{\sigma}_{k}) \sum_{\ell =1 ; \ell \neq k}^{n} \frac{\tilde{\sigma}_{k}}{\tilde{\sigma}_{k}^{2} - \tilde{\sigma}_{\ell}^{2} } \right].
\end{align*}
By  Proposition \ref{prop:sv},  Proposition \ref{prop:sti}, and our assumptions on  the $f_{k}$'s, one has that, almost surely,
\begin{align*}
\lim_{n \to + \infty} \frac{1}{m} \div \hat{\bX}^{f} =  \sum_{k = 1}^{r} \frac{f_{k}(\rho(\sigma_{k}))}{\rho(\sigma_{k})} \left(1 + c +    \frac{2 c }{\sigma_{k}^{2}} \right).
\end{align*}
which completes the proof.
\end{proof}
Hence, in the Gaussian spiked population model, by Proposition \ref{prop:dof} and using that $\sigma_{k}^{2} > \sqrt{c}$ for all $1 \leq k \leq r$, it follows that if $s \subseteq \{1,\ldots,r \}$ then
\begin{equation}
\lim_{n \to + \infty}  \frac{1}{m} \DOF( \tilde{\bX}^{s}) = |s| \left(1 + c +    \frac{2 c }{\sigma_{k}^{2}} \right) \leq  |s| \left(1 + \sqrt{c} \right)^2 = |s| c_{+}^{2}.
\end{equation}
Hence, the quantity $2 |s| \left(  \frac{1}{2} \left(\sqrt{m} + \sqrt{n}\right)^2 \right)$ is asymptotically an upper bound  of $\DOF( \tilde{\bX}^{s})$ (when normalized by $1/m$) for any given set $s \subseteq \{1,\ldots,r \}$.

Let us now consider the more general case where the entries of $\bY$ are sampled from an exponential family. To the best of our knowledge, extending the notion of the bulk edge to non-Gaussian data sampled from an exponential family has not been considered so far in the literature on random matrices and low rank perturbation model. Therefore, except in the Gaussian case, it is far from being trivial to find an appropriate threshold value $\bar{c}$ to define an active set  in the form $\bar{s} = \{k \; ; \; \tilde{\sigma}_k > \bar{c}\}$.

Nevertheless, to select an appropriate active set of singular values, we introduce the following criterion that is inspired by the previous results on the DOF of the estimator $\tilde{\bX}^{s}$ in the Gaussian case and the statistical literature on the well known AIC for model selection \cite{akaike1974new}.

\begin{defin}
{\it
The AIC associated to $\tilde{\bX}^s = \sum_{k \in s} \tilde{\sigma}_k \tilde{\bu}_k \tilde{\bv}^t_k$ is
\begin{align}
  \label{eq:aic}
  \mathrm{AIC}(\tilde{\bX}^s) = -2 \log q(\bY; \tilde{\bX}^s) + 2 |s| p_{n,m}
  \quad
  \text{with}
  \quad p_{n,m} = \frac{1}{2} \left(\sqrt{m} + \sqrt{n}\right)^2~.
\end{align}
where $|s|$ is the cardinal of $s$, and $q(\bY; \tilde{\bX}^s) = \prod_{i=1}^{n} \prod_{j=1}^{m} q( \bY_{ij} ; \tilde{\bX}^s_{ij})$ is the likelihood of the data in the general form \eqref{eq:expfam} at the estimated parameters $ \bX_{ij} =   \tilde{\bX}^s_{ij}$.
}
\end{defin}

In the above definition of $\mathrm{AIC}(\tilde{\bX}^s)$, the quantity $2 |s| p_{n,m}$ is an approximation of the degree of freedom of
$\tilde{\bX}^s$, i.e., of the numbers of its free parameters as it is justified by Proposition \ref{prop:dof} in the case of Gaussian measurements.
The AIC allows us to define an optimal subset of active variables as
\begin{align*}
s^\ast  =  \uargmin{ s \subseteq \mathcal{I}  }
  \mathrm{AIC}(\tilde{\bX}^s).
\end{align*}
For Gaussian measurements, Proposition \ref{prop:bulk_aic_gaussian} gives the value of the optimal set  $s^\ast$ in a closed-form. 

Following the arguments in Section \ref{sec:optexp}, for Gamma or Poisson measurements and for a given subset $s$, we consider the estimator 
\begin{align}  
  \tilde{\bX}^s_{\epsilon} = 
  \max\left[
    \sum_{k \in s} \tilde{\sigma}_{k} \tilde{\bu}_{k} \tilde{\bv}_{k}^{t},
    \varepsilon
    \right],
\end{align}
when $\epsilon > 0$ is an {\it a priori} value to satisfy the positivity constraint on the entries of an estimator in this setting. However, contrary to the case of Gaussian noise, the search of an optimal subset $s^\star \subset \uargmin{ s \subseteq \mathcal{I}  }
  \mathrm{AIC}(\tilde{\bX}^s_{\epsilon})$ becomes a combinatorial problem in this context. In our numerical experiments,  we thus choose to construct an approximation $\tilde{s}$ of $s^\star$ with a greedy search strategy that reads as follows
\begin{align} \label{eq:tildes}
  \tilde{s} =
  \mathcal{I} \setminus \left\{ k \in \mathcal{I} \; ; \;
  \mathrm{AIC}(\tilde{\bX}^{\mathcal{I} \backslash \{k\}}_{\epsilon}) \leq \mathrm{AIC}(\tilde{\bX}^{\mathcal{I}}_{\epsilon})
  \right\}.
\end{align}

For Gaussian measurements, $\tilde{s} = s^\star$
since the optimisation problem \eqref{eq:tildes} becomes separable.
In our numerical experiments, we have found that $\tilde s$ selects
a relevant set of active singular values which separates well the structural content of $\bX$
while removing most of the noise component. Further details are given in Section \ref{sec:num} below.


 For Gaussian noise, the computation of the active set $s^{\ast}$ of singular values may also be  interpreted as a way to estimate the unknown rank $r^{\ast}$ of the signal matrix $\bX$. In this setting, one has that $s^\star = \{k \; ; \; \tilde{\sigma}_k > c_+^{n,m}\}$ which suggests the choice
\begin{equation}
\hat{r} = \max \{k \; ; \; \tilde{\sigma}_k > c_+^{n,m}\}, \label{eq:estrankbulk}
\end{equation}
as an estimator of $r^{\ast}$.

There exists an abundant literature of the problem of estimating the rank of an empirical covariance matrix for the purpose of selecting the appropriate number of significant components to be kept  in PCA or factor analysis. It is much beyond the scope of this paper to give an overview of this topic. We point to the review in \cite{MR2036084} for a summary of existing methods to determine the number of components in PCA that are grouped into three categories: subjective methods, distribution-based test tools, and computational procedures.
 For recent contributions in the matrix denoising model \eqref{eq:deformodel} with Gaussian noise, we refer to the  works \cite{CTT2015,GDIEEE14} and references therein.  For example for Gaussian data with know variance $\tau^{2} = 1/m$, Eq.~(11) in  \cite{GDIEEE14}  on optimal hard thresholding of singular values suggest to take
\begin{equation}
\hat{r} = \max \{k \; ; \; \tilde{\sigma}_k > \lambda(c) \}, \mbox{ with }  \lambda(c) = \sqrt{2 (c+1) + \frac{8 c}{(c+1) + \sqrt{c^2+14c+1}}}, \label{eq:estrankHT}
\end{equation}
as a simple method to estimate  the rank. It should be remarked that the problem of estimating the true rank $r^{\ast}$ of $\bX$ in model  \eqref{eq:deformodel}  is somewhat ill-posed as, in the Gaussian spiked population model, Proposition \ref{prop:sv} implies that one may only expect to estimate the so-called effective rank $r_{\rm{eff}} = \max \{k \; ; \;  \sigma_k > c^{1/4}\}$  (see e.g.\ Section II.D in \cite{MR3200641}).

  In our numerical experiments, we shall compare different choices  for the active set of singular values of the form $\hat{s} =\{1,\ldots,\hat{r}\}$ where $\hat{r}$ is either given by \eqref{eq:estrankbulk},  \eqref{eq:estrankHT}, or by the ``oracle choices'' $\hat{r} = r^{\ast}$ and $\hat{r} = r_{\rm{eff}}$.

  Other methods based on hypothesis testing \cite{CTT2015} 
  could be used for rank estimation in the Gaussian model \eqref{eq:deformodel}, but it is beyond the purpose of this paper to give a detailed comparison.

For Poisson or Gamma noise, it is more difficult to interpret the computation of $s^{\ast}$ as a way to estimate the rank of $\bX$ since, in our numerical experiments, we have found that  the cardinality of $s^{\ast}$ is generally not equal to $\max \{k \; ; k \in s^{\ast} \}$. Moreover, to the best of our knowledge, there is not so much work on the estimation of the true rank of a noisy matrix beyond the Gaussian case. Therefore, we have not included a numerical comparison with other methods for the choice of the active set of singular values in these two cases.

\section{Numerical experiments} \label{sec:num}

In this section, we assess of the performance of data-driven srhinkage rules
under various numerical experiments involving Gaussian, Gamma and Poisson measurements.


\subsection{The case of a signal matrix of rank one}

We consider the simple setting where the rank $r^{\ast}$ of the matrix $\bX$ is known and equal to one meaning that
\begin{align*}
\bX = \sigma_{1} \bu_{1} \bv_{1}^{t},
\end{align*}
where $\bu_{1} \in \R^{n} $ and $\bv_{1} \in \R^{m}$ are vectors with unit norm that are fixed in this numerical experiment, and $\sigma_{1}$ is a positive real that we will let varying. We also choose to fix $n = m = 100$, and so to take $c = \frac{n}{m} = 1$ and $c_{+} = 2$. For the purpose of sampling data from Gamma and Poisson distribution, we took singular vectors $\bu_{1}$ and $\bv_{1}$ with positive entries. The $i$-th entry (resp.\  $j$-th entry) of $\bu_{1}$ (resp.\ $\bv_{1}$) is chosen to be proportional to $1 -(i/n-1/2)^2$ (resp.\ $1 -(j/m-1/2)^2$).
Let $\bY = \sum_{k = 1}^{\min(n,m)} \tilde{\sigma}_{k} \tilde{\bu}_{k} \tilde{\bv}_{k}^{t}$ be an $n \times m$ matrix whose entries are sampled from  model \eqref{eq:expfam}
and then satisfying $\E[\bY] = \bX$.

\subsubsection*{Gaussian measurements}

We first consider the case of Gaussian measurements,
where
$
\bY = \bX + \bW
$
with $\E[\bW_{ij}] = 0$, $\mathrm{Var}(\bW_{ij}) =\tau^2$ with $\tau = \frac{1}{\sqrt{m}}$.
In this context, we compare the following spectral shrinkage estimators:
\begin{description}
\item[$\bullet$] Rank-1 PCA shrinkage
\begin{flalign*}
\hat{\bX}^1 = \tilde \sigma_{1} \tilde{\bu}_{1} \tilde{\bv}_{1}^{t}, &&
\end{flalign*}
\item[$\bullet$] Rank-1 SURE-driven soft-thresholding
\begin{flalign*}
\hat{\bX}_{\mathrm{soft}}^1 = \hat{\sigma}_1 \tilde{\bu}_{1} \tilde{\bv}_{1}^{t}
\quad \text{with} \quad
\hat{\sigma}_1 =
( \tilde \sigma_{1} - \lambda(\bY))_{+}, &&
  \end{flalign*}
\item[$\bullet$] Rank-1 asymptotically optimal shrinkage proposed in \cite{MR3200641} and \cite{GavishDonoho}
  \begin{flalign*}
    \hat{\bX}_{\ast}^{1} = \hat{\sigma}_{1} \tilde\bu_{1} \tilde\bv_{1}^{t}
    \quad \text{with} \quad
    \hat{\sigma}_{1} =
    \sqrt{ \tilde{\sigma}_{1}^2 - 4} \; \1_{\left\{ \tilde{\sigma}_{1} > 2 \right\} }, &&
  \end{flalign*}
  \item[$\bullet$] Rank-1 SURE-driven weighted estimator that we have derived in Section \ref{sec:optexp}
\begin{flalign*}
\hat{\bX}_w^1 = \hat{\sigma}_1 \tilde{\bu}_{1} \tilde{\bv}_{1}^{t}
\quad \text{with} \quad
\hat{\sigma}_1 = \left( 1 - \frac{1}{ \tilde{\sigma}_{1}^2}  \left(  \frac{1}{m}+  \frac{2}{m}    \sum_{\ell = 2}^{n} \frac{ \tilde{\sigma}_{1}^{2}}{\tilde{\sigma}_{1}^{2} - \tilde{\sigma}_{\ell}^{2} } \right) \right)_{+} \tilde \sigma_1 \1_{\left\{ \tilde{\sigma}_{1} > 2 \right\} }, &&
\end{flalign*}
\end{description}
where the above formula follows from the results in Section \ref{sec:SUREGauss} using that $c = 1$ and $c_{+} = 2$ in these numerical experiments, and where, for the soft-thresholding,
the value $\lambda(\bY) > 0$ is obtained by a numerical solver in order to minimize the $\SURE$.
As a benchmark, we will also consider the oracle
estimator $\bX_*^1$ that performs shrinkage by using
the knowledge of the true singular-value $\sigma_1$ defined as
\begin{align*}
{\bX}_{\ast}^{1} = \hat{\sigma}_1 \tilde\bu_{1} \tilde\bv_{1}^{t}
\quad \text{with} \quad
\hat{\sigma}_1 = \sqrt{ \rho(\sigma_{1})^{2} - 4} \; \1_{\left\{ \rho(\sigma_{1}) > 2\right\}} 
\end{align*}
which corresponds to  the asymptotically optimal shrinking rule \eqref{eq:fastrho}  as a function of  $\rho(\sigma_{k})$  in the setting $c = 1$ and $c_+ = 2$.
Note that form the formula above $\hat w_1 = \hat{\sigma_1} / \tilde{\sigma_1}$
is necessary in the range $[0, 1]$ for all considered estimators.


In Figure \ref{fig:GaussQuantities}, we compare the
estimated singular-values $\hat{\sigma}_1$
and the estimated weights $\hat{w}_1 = \hat{\sigma}_1 / \tilde{\sigma}_1$
as functions of $\sigma_1$ for the four aforementionned estimators.
Because all estimators are subject to noise variance,
we display, for all estimators,
the median values and the 80\% confidence intervals
obtained from $M= 100$ noise realizations.
It can be seen that the median curves for the eigenvelues and the weights of $\hat{\bX}_{w}^{1}$ and $\hat{\bX}_{\ast}^{1}$ coincide (up to variations that are slightly larger for the former) which is in agreement with the asymptotic analysis of shrinkage rules that has been carried out in Section \ref{sec:SUREGauss}.
Spectral estimator obtained by SURE-driven soft-thresholding also leads to an optimal shrinkage rule. 

\begin{figure}[!t]
\centering%
\subfigure[]{\includegraphics[width=0.32\linewidth]{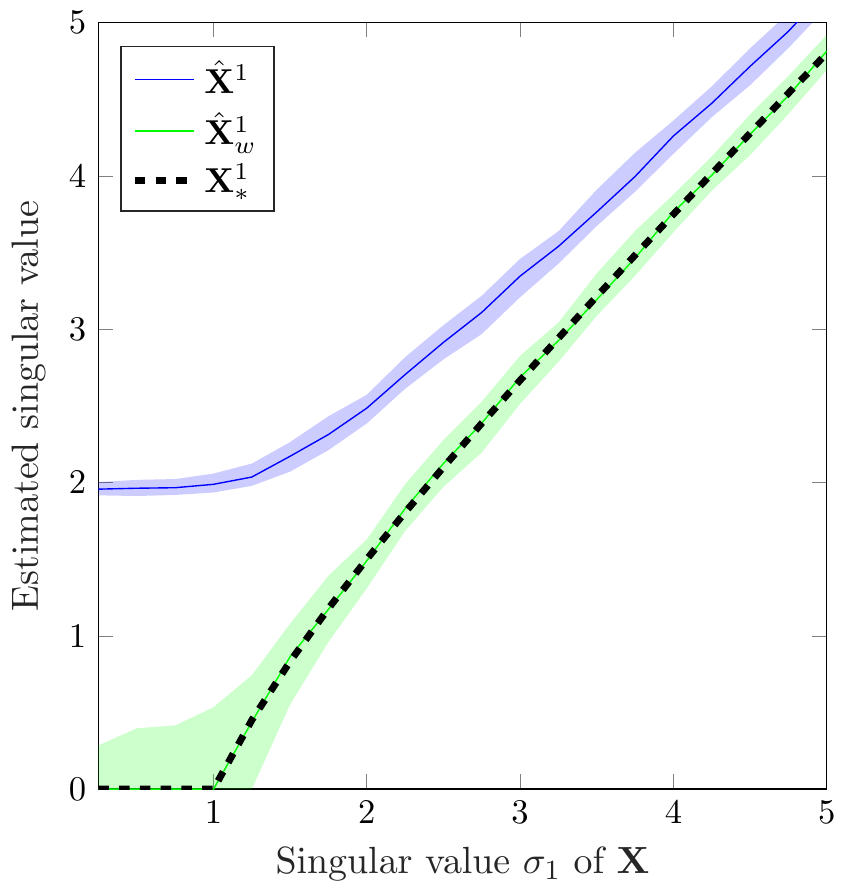}}\hfill%
\subfigure[]{\includegraphics[width=0.32\linewidth]{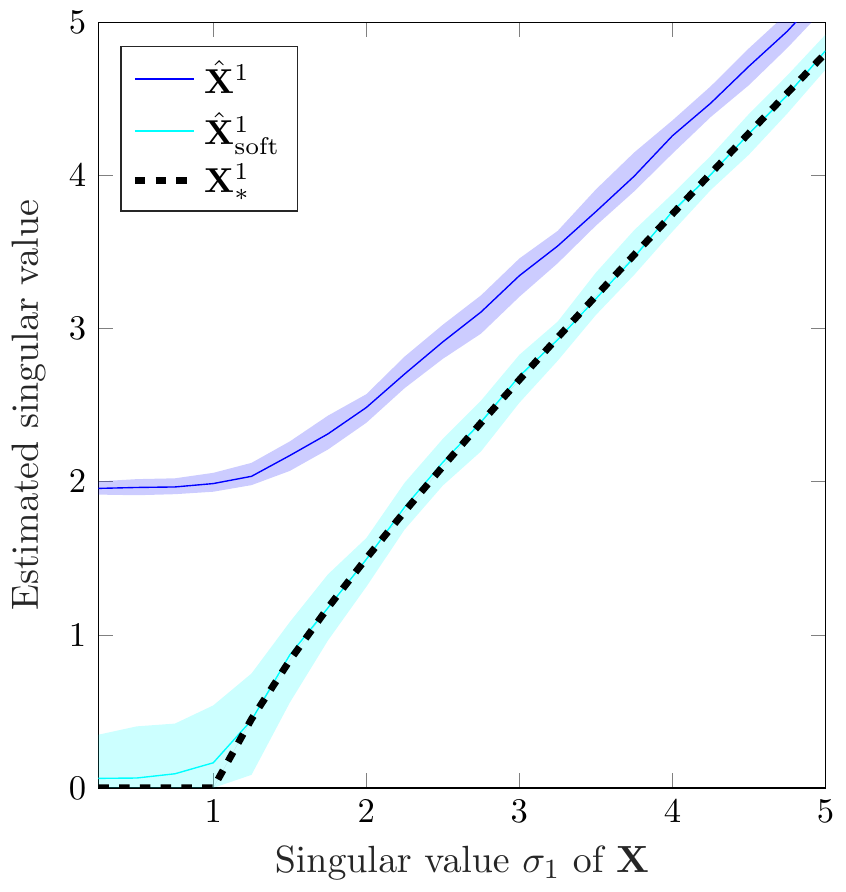}}\hfill%
\subfigure[]{\includegraphics[width=0.32\linewidth]{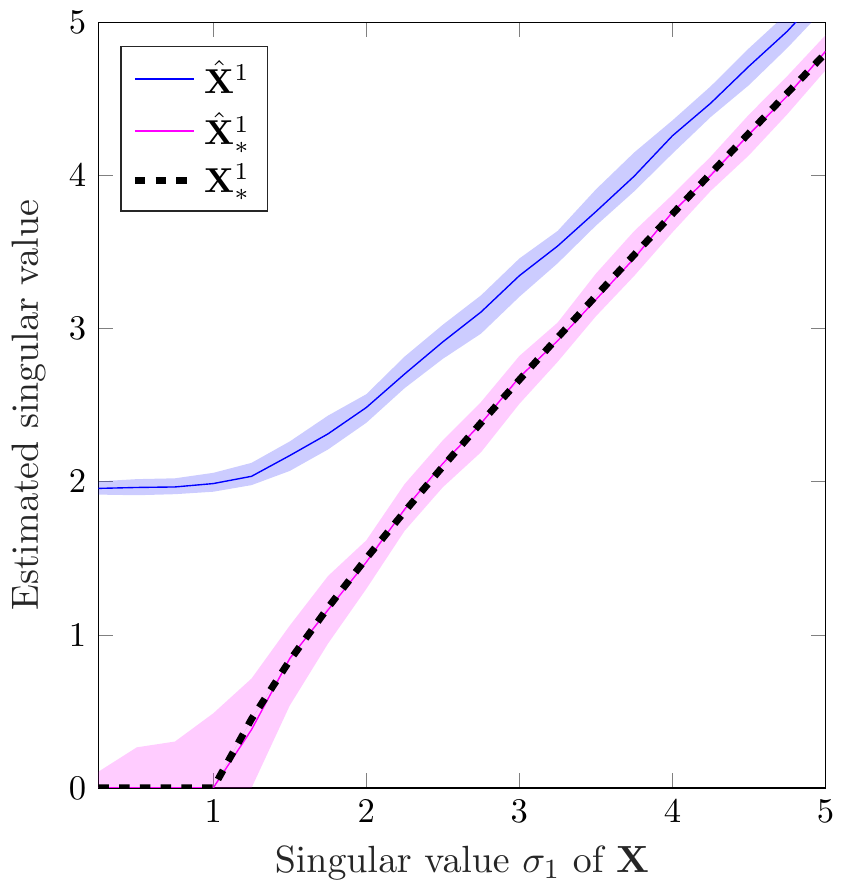}}\\
\subfigure[]{\includegraphics[width=0.32\linewidth]{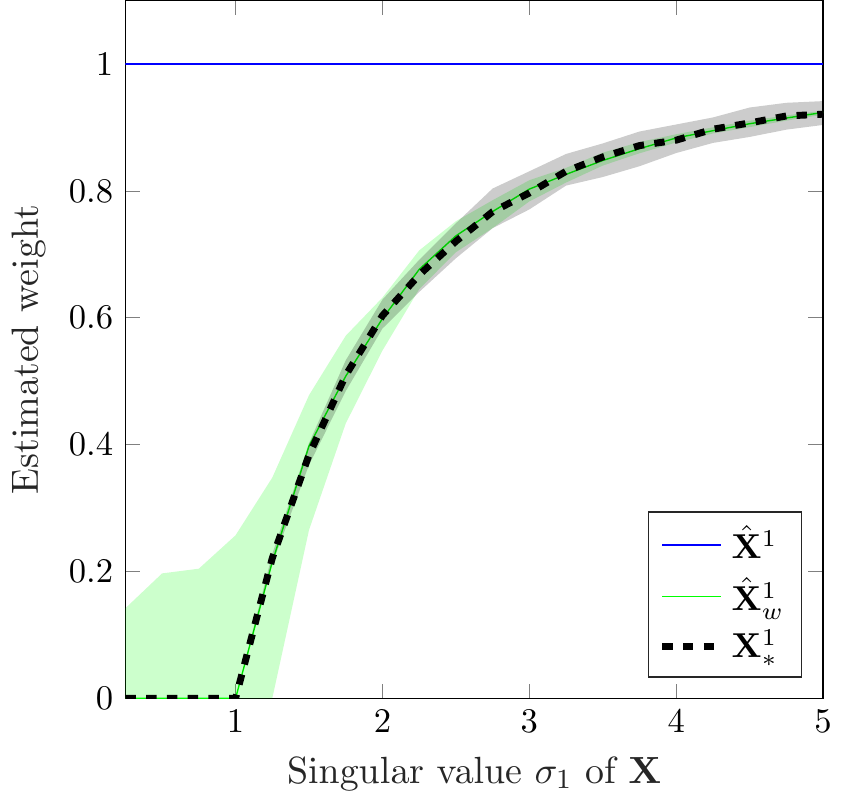}}\hfill%
\subfigure[]{\includegraphics[width=0.32\linewidth]{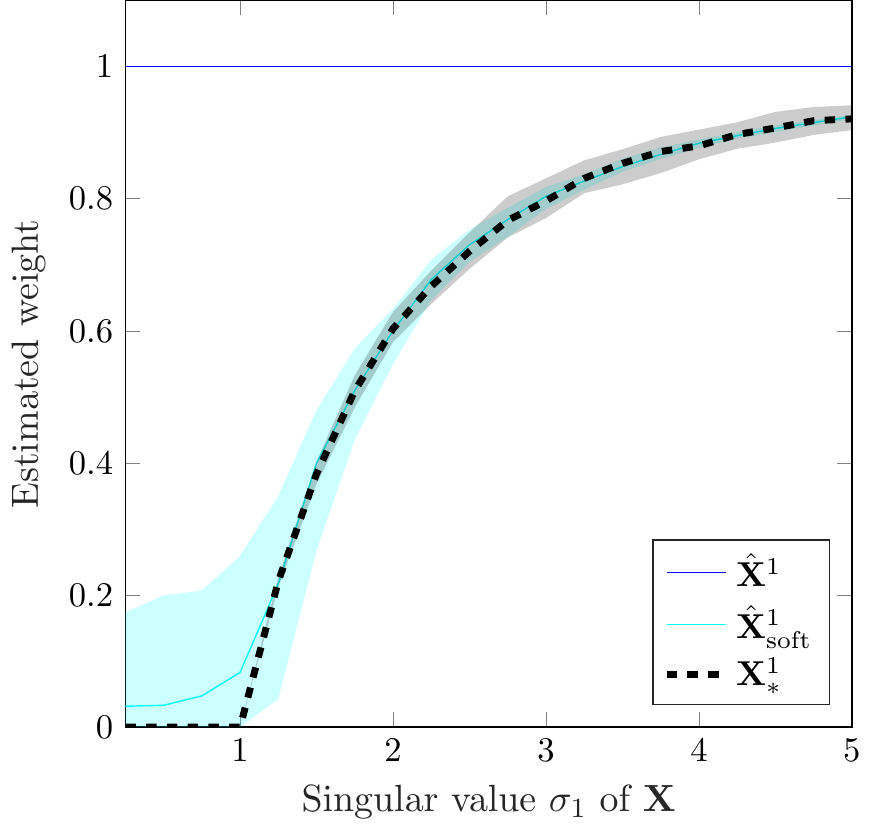}}\hfill%
\subfigure[]{\includegraphics[width=0.32\linewidth]{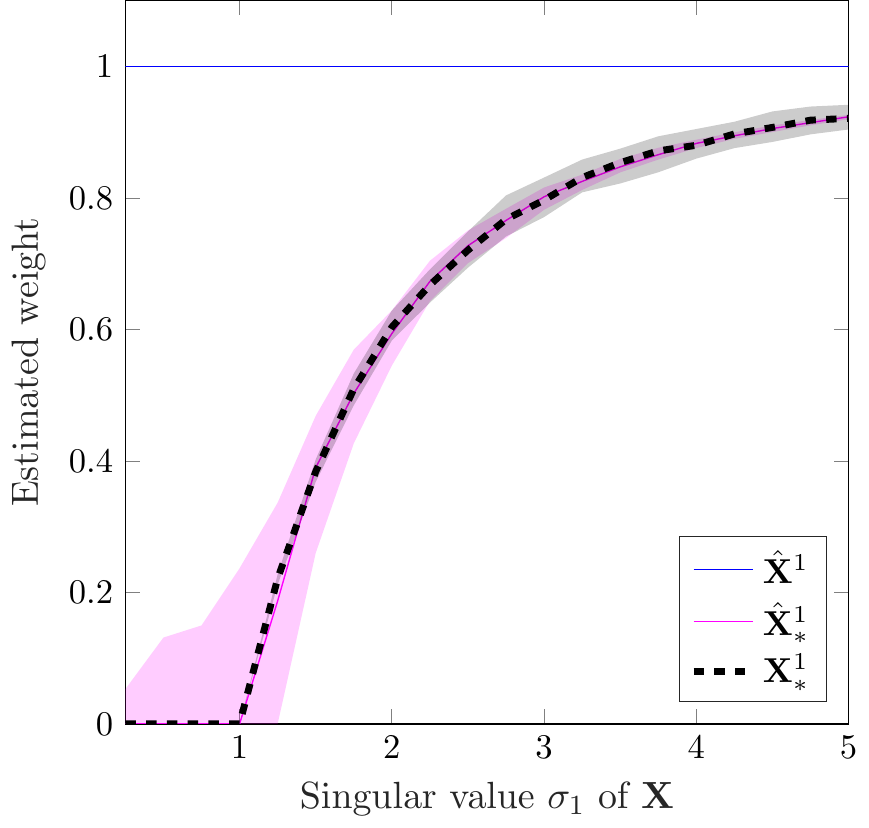}}
\caption{The case of Gaussian measurements with $m=n=100$.
  Estimated first singular value $\hat \sigma_1$ as a function of the true underlying one $\sigma_1$,
  for (a) our proposed estimator $\hat{\bX}_w^1$, (b) the soft-thresholding $\hat{\bX}_{\mathrm{\mathrm{soft}}}^1$ and (c) the asymptotical one $\hat{\bX}_{*}^1$.
  All of them are compared to the first singular value $\tilde \sigma_1$ of $\bY^1$ and the one of the oracle asymptotical estimator $\bX_{*}^1$.
  (c,d,e) Same but for the corresponding weight $\hat w_1 = \hat \sigma_1 / \tilde \sigma_1$.
  Curves have been computed on $M=100$ noise realizations, only the median and an 80\% confidence interval
  are represented respectively by a stroke and a shadded area of the same color.
  }
  \label{fig:GaussQuantities}
\end{figure}

In Figure \ref{fig:GaussRisk}, for each of the four spectral estimators above, we  display
for $M=100$ noise realizations, as functions of $\sigma_1$, the following normalized MSE
\begin{align*}
\NMSE(\hat{\bX})  =  \frac{\| \hat{\bX} - \bX \|^2_{F}}{ \| \bX \|^{2}_{F}}.
\end{align*}
The normalized MSE of the estimators $\hat{\bX}_{\mathrm{soft}}^{1}, \hat{\bX}_{\ast}^{1}$ and $\hat{\bX}_{w}^{1}$ are the same for values of $\sigma_1$ larger than $c^{1/4} = 1$, and they only differ for values of $\sigma_1$ close or below the threshold $c^{1/4} = 1$ (corresponding to values of $\rho(\sigma_1)$ below the bulk edge $c_+ = 2$).
More remarkably, above $c^{1/4} = 1$, they offer similar NMSE values to the oracle shrinkage estimator ${\bX}_{\ast}^{1}$, not only in terms of median but also
in terms of variability, as assessed by the confidence intervals.
The performances of the estimator $\hat{\bX}^{1}$ (standard PCA) are clearly poorer. These numerical experiments also illustrate that, for finite-dimensional low rank matrix denoising with $r^{\ast} = 1$, data-driven spectral estimators  obtained by minimizing a SURE criterion achieve performances that are similar to asymptotically optimal shrinkage rules.

\begin{figure}[!t]
\centering
\subfigure[]{\includegraphics[height=0.31\linewidth]{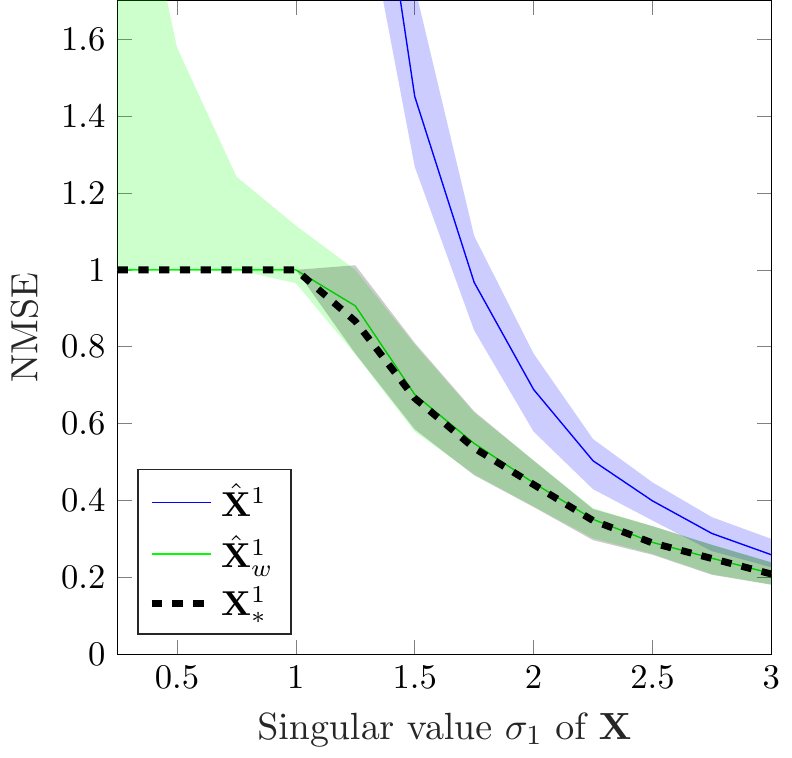}}\hfill%
\subfigure[]{\includegraphics[height=0.31\linewidth]{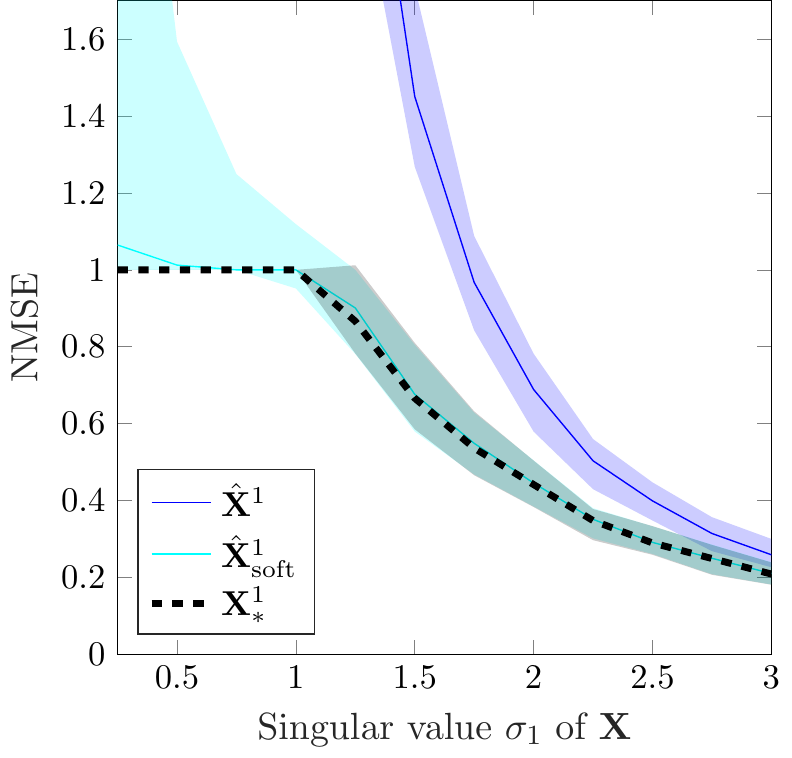}}\hfill%
\subfigure[]{\includegraphics[height=0.31\linewidth]{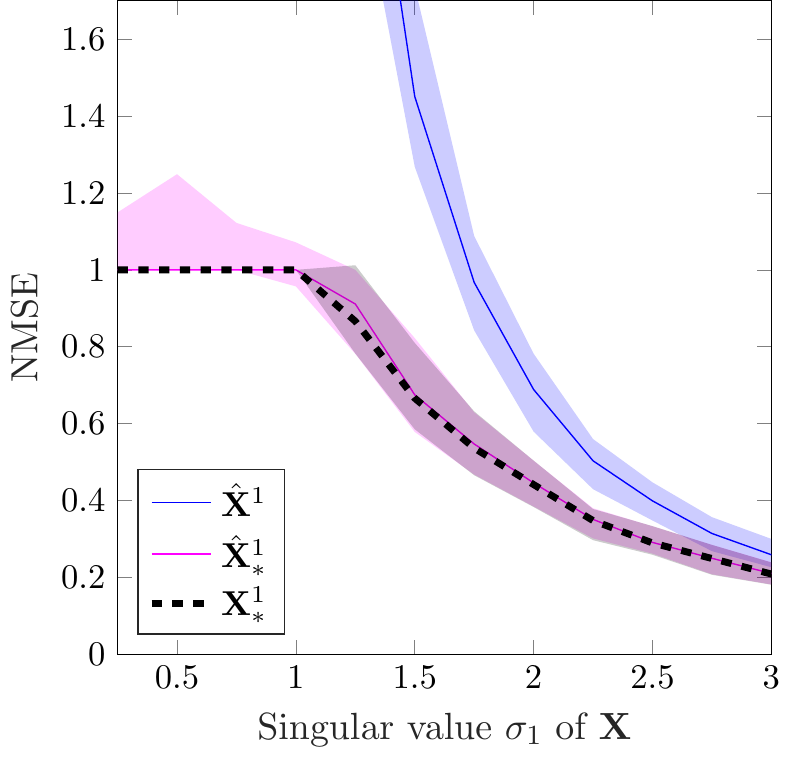}}\\
\caption{
  Same as Fig.~\ref{fig:GaussQuantities} but for the normalized MSE
  of the corresponding estimators.
}
 \label{fig:GaussRisk}
\end{figure}

\begin{figure}[!t]
\centering%
\subfigure[]{\includegraphics[height=0.24\linewidth]{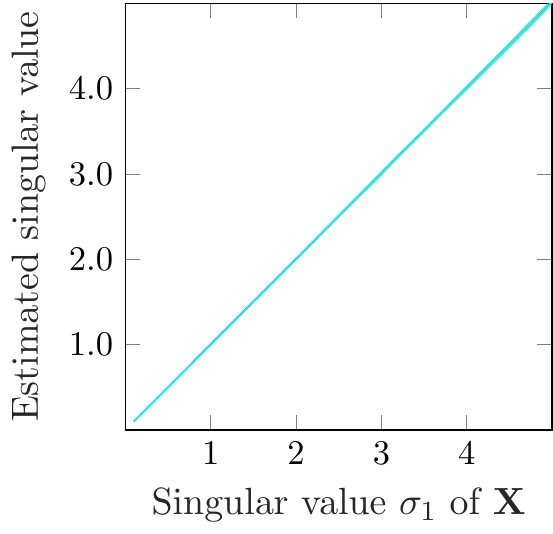}}\hfill%
\subfigure[]{\includegraphics[height=0.24\linewidth]{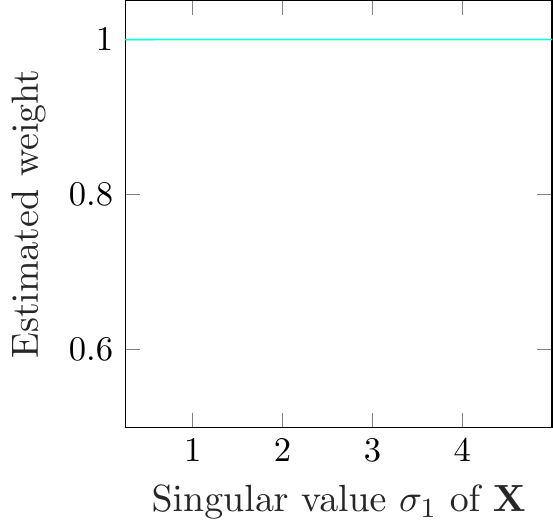}}\hfill%
\subfigure[]{\includegraphics[height=0.24\linewidth]{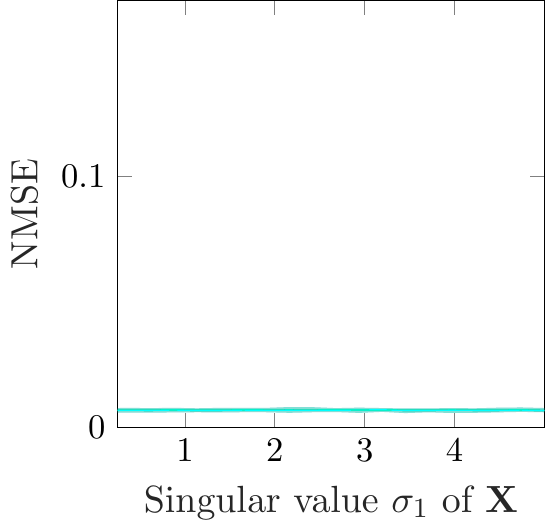}}\hfill%
\subfigure[]{\includegraphics[height=0.24\linewidth]{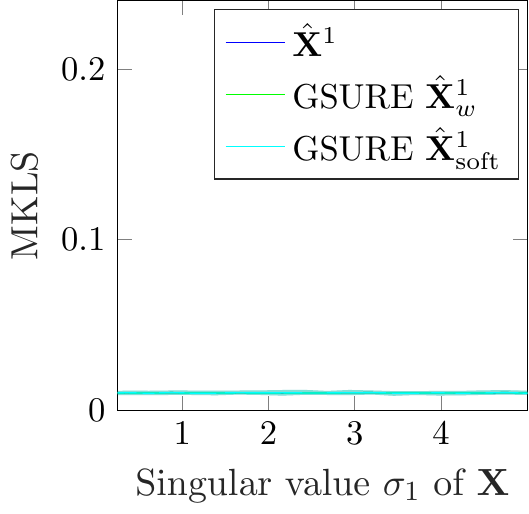}}\\
\subfigure[]{\includegraphics[height=0.24\linewidth]{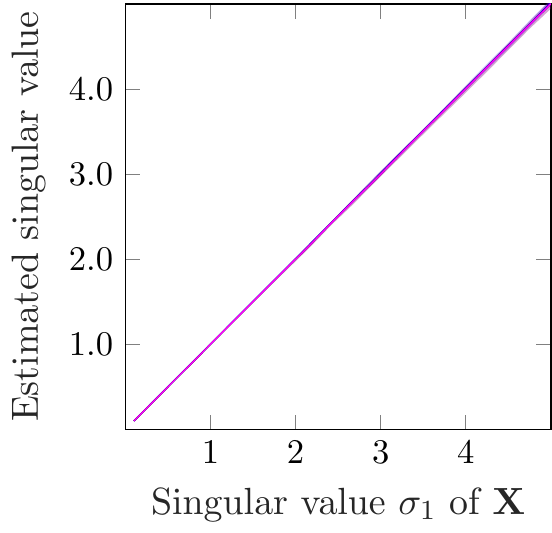}}\hfill%
\subfigure[]{\includegraphics[height=0.24\linewidth]{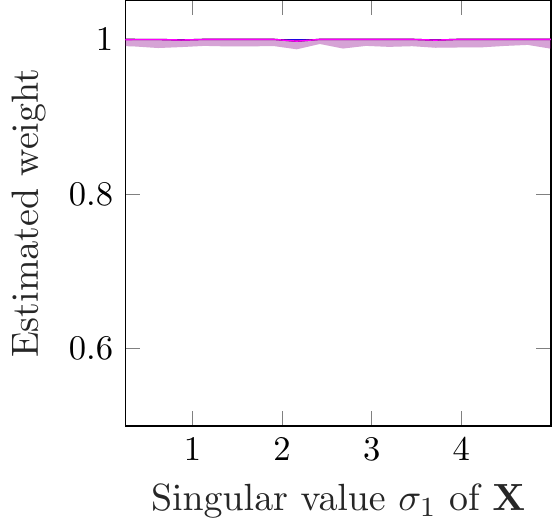}}\hfill%
\subfigure[]{\includegraphics[height=0.24\linewidth]{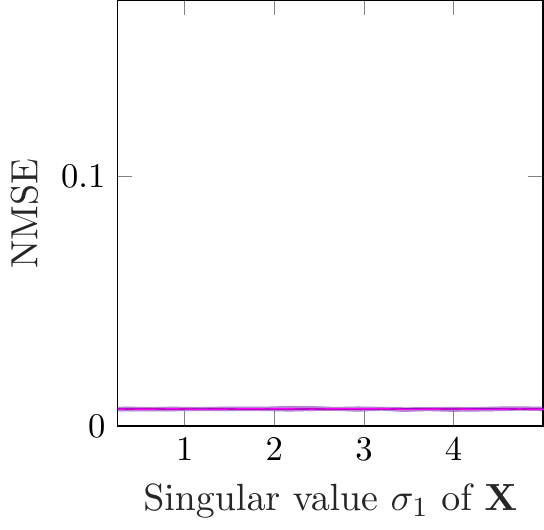}}\hfill%
\subfigure[]{\includegraphics[height=0.24\linewidth]{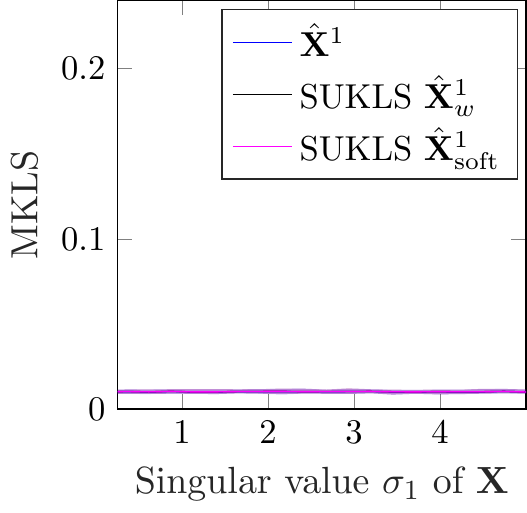}}\\
\caption{The case of Gamma measurements with $m=n=100$.
  (a) Estimated first eigenvalue $\hat \sigma_1$ as a function of the true underlying one $\sigma_1$
  for our proposed estimator $\hat{\bX}_w^1$ and the soft-thresholding $\hat{\bX}_{\mathrm{\mathrm{soft}}}^1$
  when both are guided by the GSURE.
  Both of them are compared to the first singular value $\tilde \sigma_1$ of $\bY^1$.
  Same but for (b) the corresponding weights $\hat w_1 = \hat \sigma_1 / \tilde \sigma_1$,
  (c) the NMSE risk and (d) the MKLS risk.
  (e-h) Exact same esperiments but when our proposed estimator and
  the soft-thresholding are both guided by SUKLS.
  Curves have been computed on $M=100$ noise realizations, only the median and an 80\% confidence interval
  are represented respectively by a stroke and a shadded area of the same color.
} \label{fig:GSURE}
\label{fig:SUKLS}
\end{figure}

\begin{figure}[!t]
\centering
\subfigure[]{\includegraphics[height=0.24\linewidth]{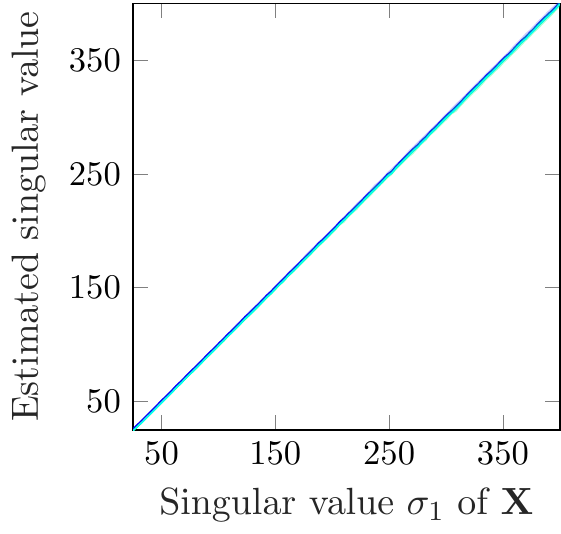}}\hfill%
\subfigure[]{\includegraphics[height=0.24\linewidth]{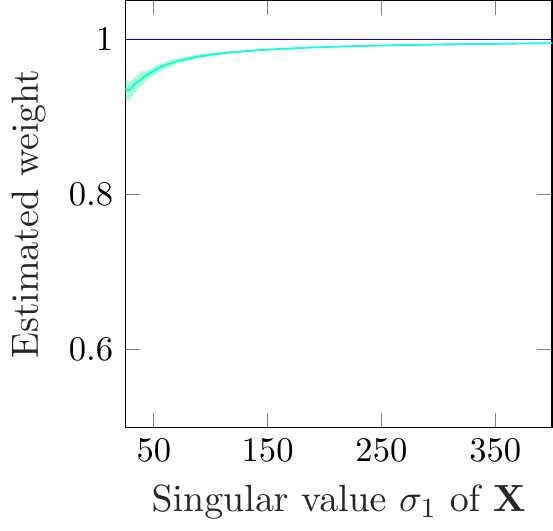}}\hfill%
\subfigure[]{\includegraphics[height=0.24\linewidth]{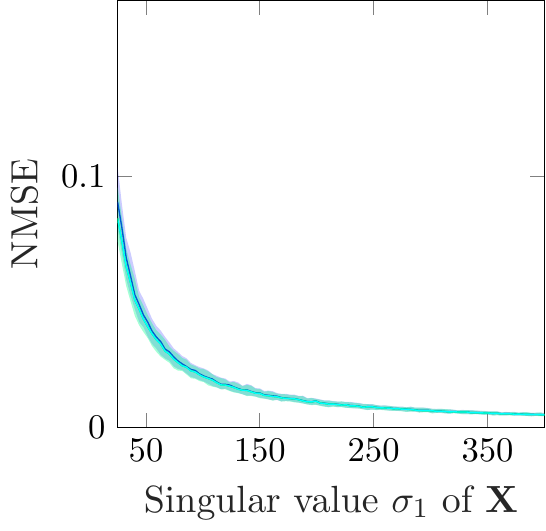}}\hfill%
\subfigure[]{\includegraphics[height=0.24\linewidth]{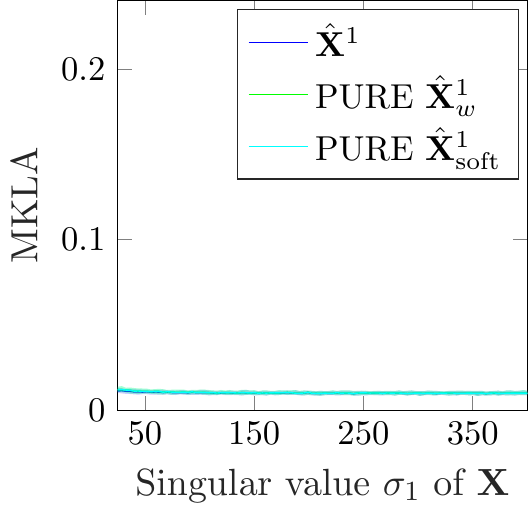}}\\
\subfigure[]{\includegraphics[height=0.24\linewidth]{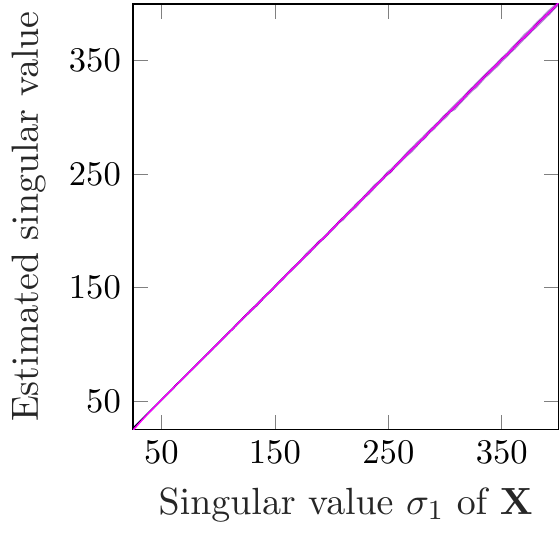}}\hfill%
\subfigure[]{\includegraphics[height=0.24\linewidth]{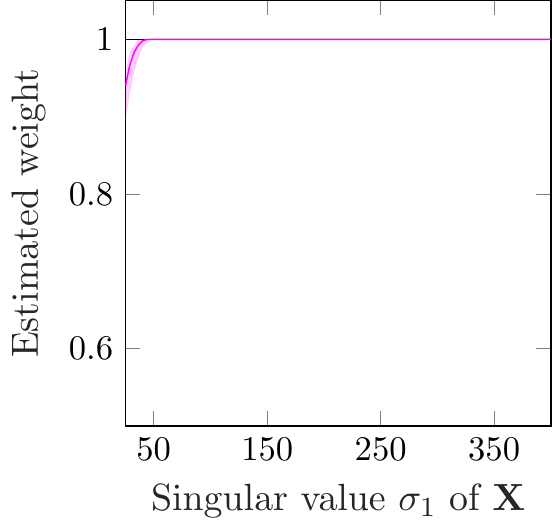}}\hfill%
\subfigure[]{\includegraphics[height=0.24\linewidth]{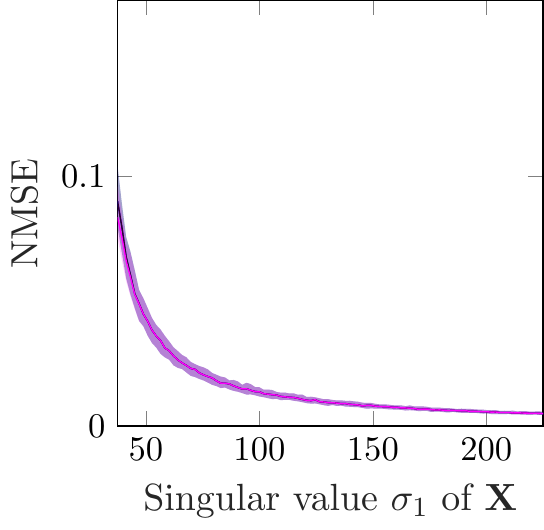}}\hfill%
\subfigure[]{\includegraphics[height=0.24\linewidth]{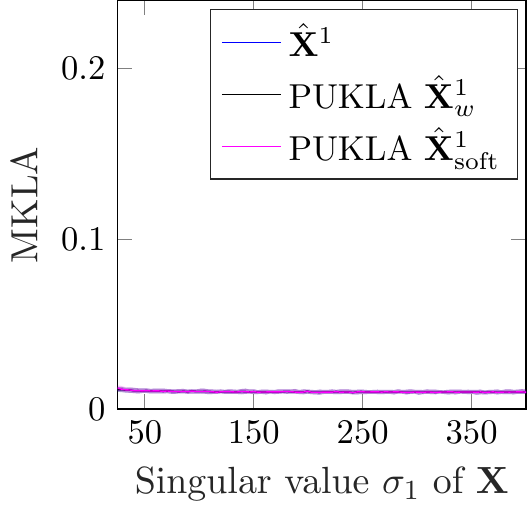}}\\
\caption{
  The case of Poisson measurements with $m=n=100$.
  (a) Estimated first eigenvalue $\hat \sigma_1$ as a function of the true underlying one $\sigma_1$
  for our proposed estimator $\hat{\bX}_w^1$ and the soft-thresholding $\hat{\bX}_{\mathrm{\mathrm{soft}}}^1$
  when both are guided by the PURE.
  Both of them are compared to the first singular value $\tilde \sigma_1$ of $\bY^1$.
  Same but for (b) the corresponding weights $\hat w_1 = \hat \sigma_1 / \tilde \sigma_1$,
  (c) the NMSE risk and (d) the MKLA risk.
  (e-h) Exact same esperiments but when our proposed estimator and
  the soft-thresholding are both guided by PUKLA.
  Curves have been computed on $M=100$ noise realizations, only the median and an 80\% confidence interval
  are represented respectively by a stroke and a shadded area of the same color.
} \label{fig:PURE}
\end{figure}

\subsubsection*{Gamma and Poisson distributed measurements}

Let us now consider the case where the entries of $\bY_{ij} \geq 0$ of the data matrix $\bY$ are independently sampled from a Gamma or Poisson distribution with mean $\bX_{ij} > 0$. To satisfy the constraint that the estimators must be matrices with positive entries, we consider estimators of the form \eqref{eq:hatXw1_eps}.
In this context, we compare the following spectral shrinkage estimators,
set for $\varepsilon = 10^{-6}$, as:
\begin{description}
\item[$\bullet$] Rank-1 PCA shrinkage
\begin{flalign*}
\hat{\bX}^1 = \max\left[\tilde \sigma_{1} \tilde{\bu}_{1} \tilde{\bv}_{1}^{t}, \varepsilon \right], &&
\end{flalign*}
\item[$\bullet$] Rank-1 GSURE/SUKLS/PURE/PUKLA-driven soft-thresholding
\begin{flalign*}
\hat{\bX}_{\mathrm{soft}}^1 = \max\left[\hat{\sigma}_1 \tilde{\bu}_{1} \tilde{\bv}_{1}^{t}, \varepsilon \right]
\quad \text{with} \quad
\hat{\sigma}_1 =
( \tilde \sigma_{1} - \lambda(\bY))_{+}, &&
  \end{flalign*}
  \item[$\bullet$] Rank-1 GSURE/SUKLS/PURE/PUKLA-driven weighted estimator
\begin{flalign*}
\hat{\bX}_w^1 = \max\left[\hat{\sigma}_1 \tilde{\bu}_{1} \tilde{\bv}_{1}^{t}, \varepsilon \right]
\quad \text{with} \quad
\hat{\sigma}_1 = w_1(\bY) \tilde{\sigma}_1 \1_{\left\{ 1 \in \tilde{s} \right\} }, &&
\end{flalign*}
\end{description}
where $\tilde s$ is the approximated active subset
as defined in Section \ref{sec:bulk_exp_fam}.
For the soft-thresholding,
the value $\lambda(\bY) > 0$ is obtained by a numerical solver in order to minimize
either the $\GSURE$ or the $\SUKLS$ criterion (in the Gamma case)
and either the $\PURE$ or the $\PUKLA$ criterion (in the Poisson case).
The weight
$w_1(\bY) \in [0, 1]$ is obtained by a numerical solver in order to minimize the $\GSURE$ and the $\PURE$,
as described in Section \ref{sec:algo}.
According to Section \ref{sec:optexp},
the weight $w_1(\bY) \in [0, 1]$,
minimizing the $\SUKLS$ criterion,
has the following closed-form formula
\begin{align*}
w_{1}(\bY) = \min\left[1, \left(    \frac{L - 1}{L m n } \sum_{i=1}^{n} \sum_{j=1}^{m}   \frac{\hat \bX^1_{ij}}{\bY_{ij} } + \frac{1}{L m n} \left(1  + 2   \sum_{\ell =2  }^{n} \frac{\tilde{\sigma}_{1}^{2}}{\tilde{\sigma}_{1}^{2} - \tilde{\sigma}_{\ell}^{2} } \right) \right)^{-1} \right],
\end{align*}
and for the $\PUKLA$ criterion, we have
\begin{align*}
w_{1}(\bY) = \min\left[1, \frac{ \sum_{i=1}^{n} \sum_{j=1}^{m}  \bY_{ij} }{  \sum_{i=1}^{n} \sum_{j=1}^{m} \hat \bX^1_{ij} } \right].
\end{align*}
To evaluate the performances of these estimators, we perform again a study
involving $M = 100$ noise realizations.

In the Gamma case with shape parameter $L = 3$,
results are reported in Figure \ref{fig:GSURE}
where $\sigma_1$ ranges from $0.1$ to $5$.
In the Poisson case,
results are reported in Figure \ref{fig:PURE}.
To generate data from a Poisson distribution with mean value $\bX = \sigma_{1} \bu_{1} \bv_{1}^{t}$, we took $\sigma_{1}$
ranging from $25$ to $400$. In this context, the entries $\bX_{i,j}$ are in average ranging from $0.25$ to $4$.
When $\sigma_{1} = 25$, about $78\%$ of the entries of $\bY$ are $0$ and $20\%$ are equals to $1$ which correspond to an extreme level of noise,
while when $\sigma_{1} = 400$, the entries of $\bY$ concentrate around $4$ with a standard deviation of $2$
which correspond to a simpler noisy setting.

In these experiments, it can be seen that all the data-dependent spectral estimators achieve comparable results
with really small errors in terms of MSE and MKL risks. Their performances are similar to $\hat{\bX}^{1} = \tilde{\sigma}_{1} \tilde{\bu}_{1} \tilde{\bv}_{1}^{t}$ meaning that optimizing either SURE-like criteria leads to a spectral estimator closed to correspond to matrix denoising by ordinary PCA.
However, unlike the Gamma case, it might be observed in the Poisson case
that when reaching a stronger noise level, {\it i.e}, for small value of $\sigma_1$,
the NMSE of all estimator increases as the denoising problem becomes more challenging.
Nevertheless, only the weight of $\hat{\bX}_w^1$ driven by PUKLA does not present a drop
wich allows reaching a slightly smaller MKLA.
In the Gamma case, the noise level being proportional to the signal level,
the NMSE/MKLS remain constant for all $\sigma_1$.

\bigskip

Finally, as mentionned  by \cite{GavishDonoho}, to use the estimator $\hat{\bX}_{\ast}^{1}$ in a Gaussian model with homoscedastic variance $\tau^2 \neq  \frac{1}{m}$, one may take the estimator $\hat{\bX}_{\ast}^{1} = \sqrt{m} \tau f_{1}^{\ast}( \tilde{\sigma}_{1} / (\sqrt{m} \tau)) \tilde\bu_{1} \tilde\bv_{1}^{t}$. Hence, provided the variance of the entries of the data matrix $\bY$ is known, it is always possible to use a scaled version of the shrinkage rule from \cite{GavishDonoho} when $\tau^2 \neq  \frac{1}{m}$. However, in the setting of Gamma or Poisson noise, the variance of the additive noise varies from one entry to another and depends on the entries of the unknown signal matrix $\bX$ to recover. For this reason, it is not possible to use a scaled version of the shrinkage rule from \cite{GavishDonoho} as this would require to use scaling factors depending on the unknown values of the entries of $\bX$. Therefore, a comparison between our approach and the asymptotically optimal shrinkage proposed in \cite{MR3200641}  and \cite{GavishDonoho} (for Gaussian noise) is not possible in the case of Gamma or Poisson measurements. Note that for Gamma measurements one has that $\var(\bY_{ij}) = \bX_{ij}^{2}/L$, and, in our numerical experiments, it is assumed that the constant $L$ is known.

A typical example where this assumption is reasonable,
is the one of the statistical models of speckle used in coherent imagery,
such as, Synthetic Aperture Radar (SAR) and
SOund Navigation And Ranging (SONAR) imagery. In such imaging systems,
the observed irradiance $\bY_{ij}$ of a pixel with indices $(i,j)$
is obtained as the square modulus of a complex signal modeled as
being zero-mean circular complex Gaussian distributed
(consequence of the Central Limit Theorem)
\cite{goodman1976some}.
It follows that $\bY_{ij}$ has an exponential distribution%
\footnote{The exponential distribution is a particular instance of the Gamma distribution with parameter $L=1$}
with mean $\bX_{ij}$ corresponding to the underlying irradiance to be estimated.
In order to improve the contrast of such images (namely, the signal to noise ratio),
an average of $L$ independent and identically distributed images is often performed,
and the resulting pixel value becomes Gamma distributed with parameter $L$
\cite{ulaby1989handbook}.
Because the number $L$ of images to be averaged is chosen by the practitioner,
the parameter $L$ is absolutely known without uncertainties, and
for this reason it does not require to be estimated.
Nevertheless the variance $\var(\bY_{ij}) = \bX_{ij}^{2}/L$ remains unknown.

\bigskip

While all estimators behave similarly in the rank 1 setting,
we will see in the next section that they can significantly differ when
the rank is let to be larger than $2$.


\subsection{The case of a signal matrix of rank larger than two} \label{sec:rankmoretwo}

We now consider the more complex an realistic setting where the rank $r^{\ast}$ of the matrix $\bX$ is unknown
and potentially larger than two, i.e.,
\begin{align*}
\bX = \sum_{k=1}^{r^{\ast}} \sigma_{k} \bu_{k} \bv_{k}^{t},
\end{align*}
where $\bu_{k} \in \R^{n} $ and $\bv_{k} \in \R^{m}$ are vectors with unit norm that are fixed in this numerical experiment, and $\sigma_{k}$ are positive real values also fixed in this experiment. We also choose to fix $n = 100$ and $m = 200$, while the true rank is $r^{\ast} = 9$ as shown by the red curve in Figure  \ref{fig:gaussian_higher_rank}(i).
Again, let $\bY = \sum_{k = 1}^{\min(n,m)} \tilde{\sigma}_{k} \tilde{\bu}_{k} \tilde{\bv}_{k}^{t}$ be an $n \times m$ matrix whose entries are sampled from  model \eqref{eq:expfam}
and then satisfying $\E[\bY] = \bX$.

\newcommand{\nmakebox}[2][1em]{\hspace{-#1}\makebox[#1][l]{#2}}
\newlength{\plop}
\setlength{\plop}{0.6em}
\begin{figure}[!t]
  \centering
  \hfill
  \subfigure[]{%
    \includegraphics[width=0.18\linewidth,viewport=1 130 100 200, clip]{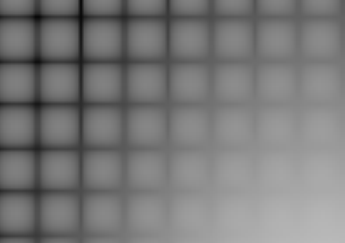}%
    \nmakebox[\plop]{\textcolor{red}{$\bullet$}}%
  }\hspace{1em}
  \subfigure[]{%
    \includegraphics[width=0.18\linewidth,viewport=1 130 100 200, clip]{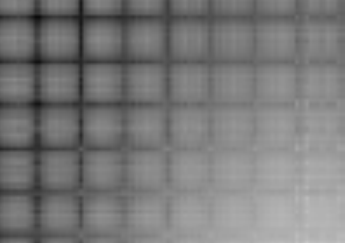}%
    \nmakebox[\plop]{\textcolor{magenta}{$\bullet$}}%
  }\hspace{1em}
  \subfigure[]{%
    \includegraphics[width=0.18\linewidth,viewport=1 130 100 200, clip]{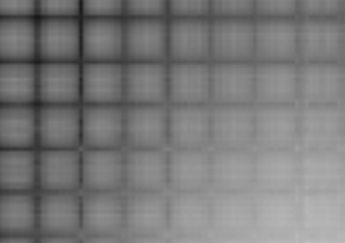}%
    \nmakebox[\plop]{\textcolor{magenta}{$\boldsymbol\circ$}}%
  }\hspace{1em}
  \subfigure[]{%
    \includegraphics[width=0.18\linewidth,viewport=1 130 100 200, clip]{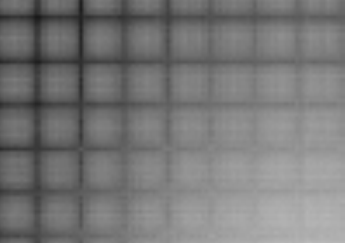}%
    \nmakebox[\plop]{\textcolor{green}{$\bullet$}}%
  }%
  \hfill{ }
  \\
  \hfill
  \subfigure[]{%
    \includegraphics[width=0.18\linewidth,viewport=1 130 100 200, clip]{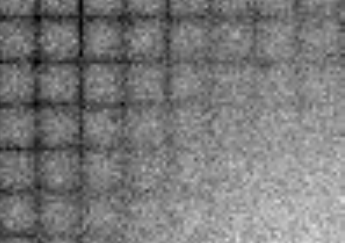}%
    \nmakebox[\plop]{\textcolor{blue}{$\bullet$}}%
  }\hspace{1em}
  \subfigure[]{%
    \includegraphics[width=0.18\linewidth,viewport=1 130 100 200, clip]{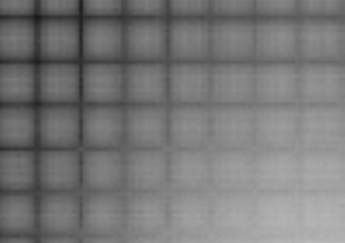}%
    \nmakebox[\plop]{\textcolor{black}{$\bullet$}}%
  }\hspace{1em}
  \subfigure[]{%
    \includegraphics[width=0.18\linewidth,viewport=1 130 100 200, clip]{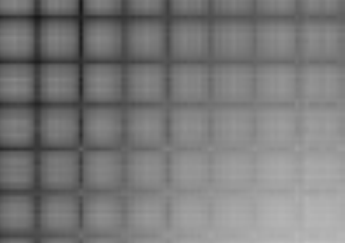}%
    \nmakebox[\plop]{\textcolor{black}{$\boldsymbol\circ$}}%
  }\hspace{1em}
  \subfigure[]{%
    \includegraphics[width=0.18\linewidth,viewport=1 130 100 200, clip]{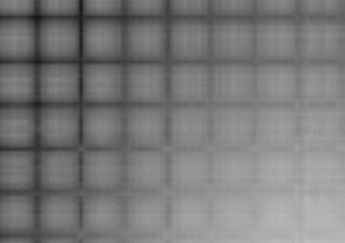}%
    \nmakebox[\plop]{\textcolor{chocolate}{$\bullet$}}%
  }%
  \hfill{ }
  \\%
  \hfill%
  \subfigure[]{\includegraphics[height=0.24\linewidth]{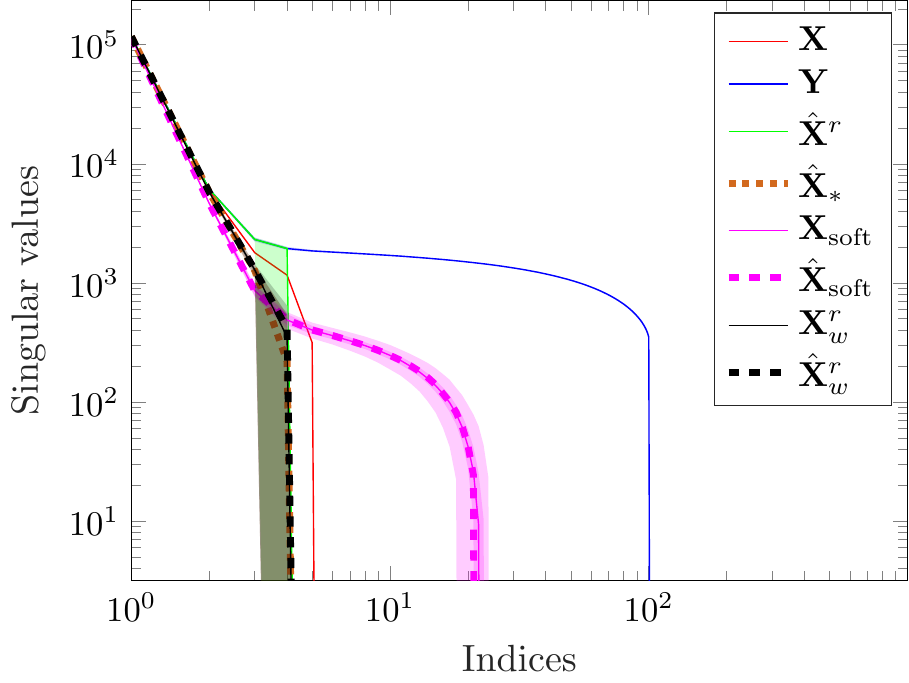}}\hfill%
  \subfigure[]{\includegraphics[height=0.24\linewidth]{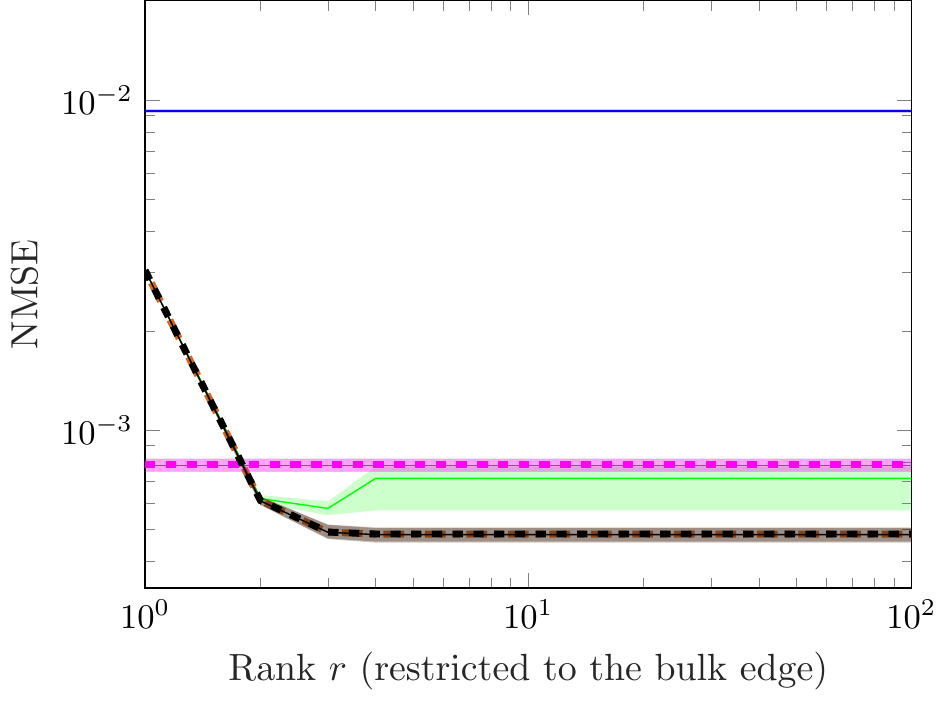}}\hfill%
  \subfigure[]{\includegraphics[height=0.24\linewidth]{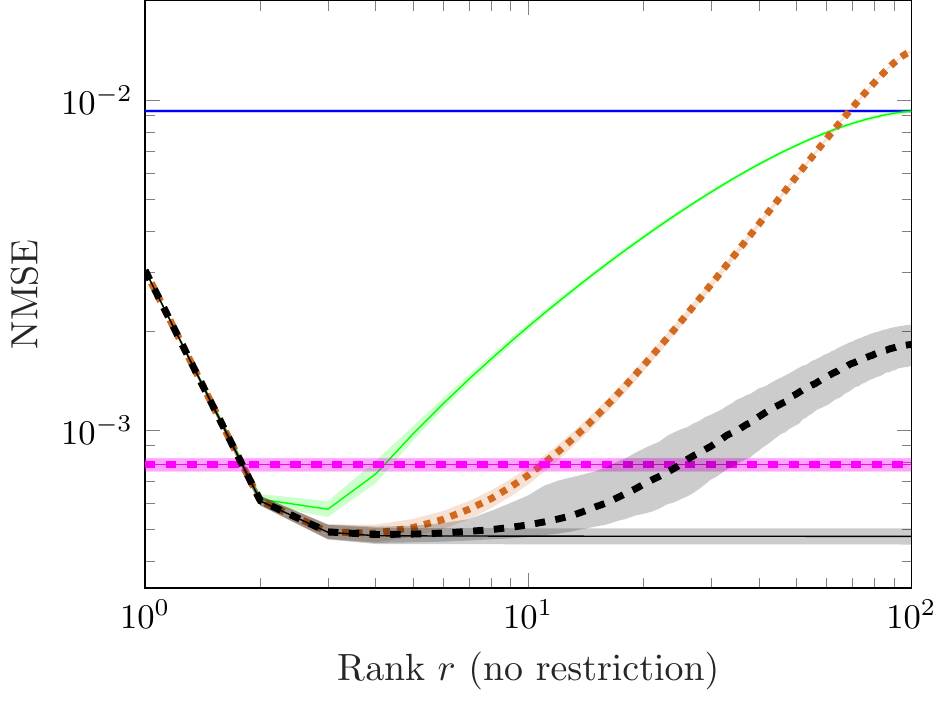}}%
  \hfill{ }
  \\%
  \hfill%
  \subfigure[]{\includegraphics[height=0.24\linewidth]{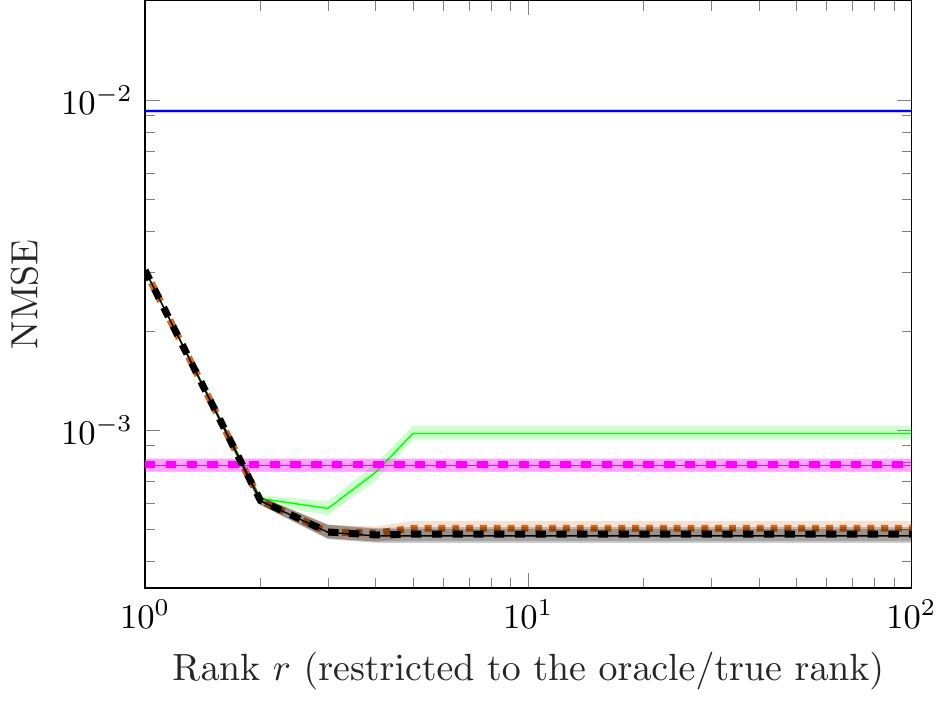}}\hfill%
  \subfigure[]{\includegraphics[height=0.24\linewidth]{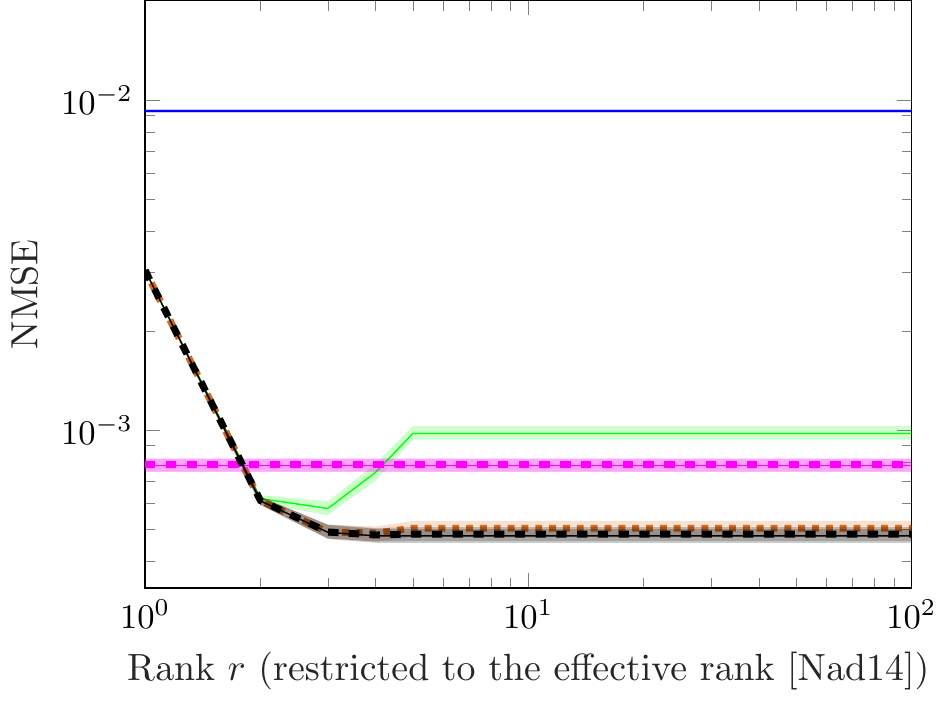}}\hfill%
  \subfigure[]{\includegraphics[height=0.24\linewidth]{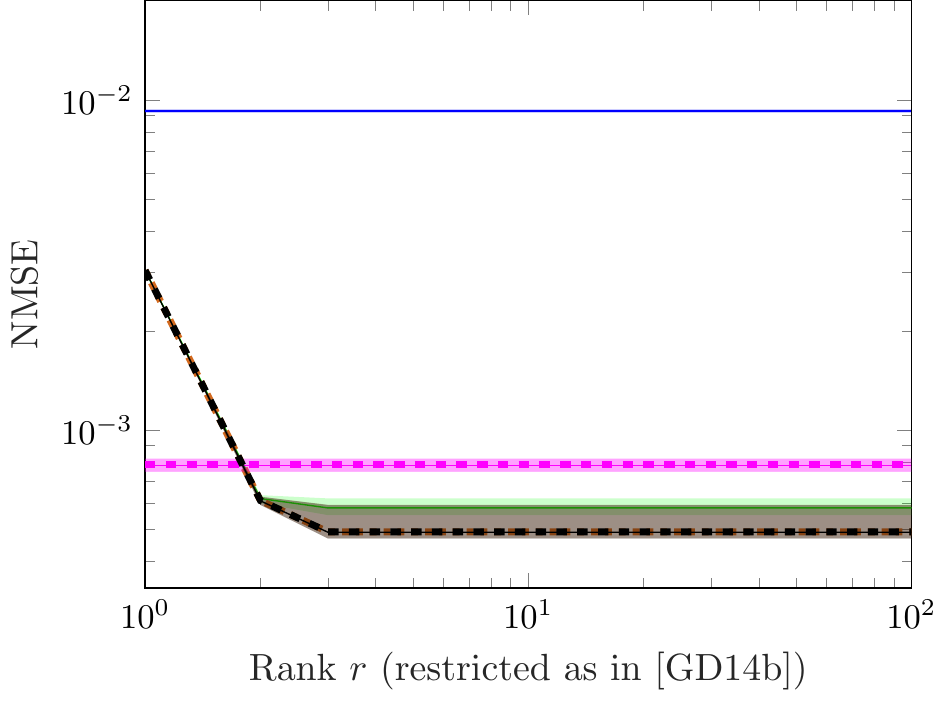}}%
  \hfill{ }
  \\[-0.5em]
\caption{
  (a) Zoom  on a $100 \times 200$ noise-free matrix and
  (e) a single realization of corrupted version by Gaussian noise ($\tau=80$).
  (b,c) Oracle soft-thresholding $ \bX_{\mathrm{soft}}$
  and data-driven soft-thresholding $ \hat{\bX}_{\mathrm{soft}}$.
  (d) PCA full rank $\hat{\bX}^{r_{\max}}$, i.e., $r_{\max} = \min(n, m)$.
  (f,g,h) Oracle full rank approximation
  $\bX^{r_{\max}}_w$, and data-driven full rank estimation
  $\hat \bX^{r_{\max}}_{w}$ and $\hat \bX^{r_{\max}}_{*}$.
  (i) Their corresponding singular values.
  (j) NMSE of the various approximations
  as a function of the rank $r$.
  (k) Same but without knowledge the bulk edge, namely $c_+ = 0$. (l,m,n) Same when the active set of singular values is of the form $\hat{s} =\{1,\ldots,\hat{r}\}$ where $\hat{r}$ is either given by $\hat{r} = r^{\ast}$ (oracle/true rank), $\hat{r} = r_{\rm{eff}}$ (effective rank) or  by \eqref{eq:estrankHT}.  In all the figures, the solid curves correspond to oracle estimators and the dashed curves correspond to data-driven estimators, obtained over $M = 1,000$ noise realizatrions. The grey areas represent a 80\% confidence interval.
}
\vspace{-2em}
\label{fig:gaussian_higher_rank}
\end{figure}

\subsubsection*{Gaussian distributed measurements}

We first consider the case of Gaussian measurements,
where
$
\bY = \bX + \bW
$
with $\E[\bW_{ij}] = 0$, $\mathrm{Var}(\bW_{ij}) =\tau^2$ with $\tau = 80$.
In the following numerical experiments, we study the behavior of the spectral estimator:
\begin{description}
\item[$\bullet$] PCA shrinkage
\begin{flalign*}
\hat{\bX}^r = \sum_{k=1}^r \tilde \sigma_{k} \tilde{\bu}_{k} \tilde{\bv}_{k}^{t} \; \1_{\left\{ k \;\leq\; \hat{r} \right\} }, &&
\end{flalign*}
\item[$\bullet$] SURE-driven soft-thresholding
\begin{flalign*}
\hat{\bX}_{\mathrm{soft}} = \sum_{k=1}^{\min(m, n)} \hat{\sigma}_k \tilde{\bu}_{k} \tilde{\bv}_{k}^{t}
\quad \text{with} \quad
\hat{\sigma}_k =
( \tilde \sigma_{k} - \lambda(\bY))_{+}, &&
  \end{flalign*}
\item[$\bullet$] Asymptotically optimal shrinkage proposed in \cite{MR3200641} and \cite{GavishDonoho}
  \begin{flalign*}
    \hat{\bX}_{\ast}^r = \sum_{k=1}^r \hat{\sigma}_{k} \tilde\bu_{k} \tilde\bv_{k}^{t}
    \quad \text{with} \quad
    \hat{\sigma}_{k} =
    \frac{1}{ \tilde{\sigma}_{k}} \sqrt{\left(\tilde{\sigma}_{k}^2-(c+1) \right)^{2}-4c} \; \1_{\left\{ k \;\leq\; \hat{r} \right\} }, &&
  \end{flalign*}
  \item[$\bullet$] SURE-driven weighted estimator that we have derived in Section \ref{sec:optexp}
\begin{flalign*}
\hat{\bX}_w^r = \sum_{k=1}^r \hat{\sigma}_k \tilde{\bu}_{k} \tilde{\bv}_{k}^{t}
\quad \text{with} \quad
\hat{\sigma}_k = \left( 1 - \frac{1}{ \tilde{\sigma}_{k}^2}  \left(  \frac{k}{m}+  \frac{2}{m}    \sum_{\ell = 2}^{n} \frac{ \tilde{\sigma}_{k}^{2}}{\tilde{\sigma}_{k}^{2} - \tilde{\sigma}_{\ell}^{2} } \right) \right)_{+}\tilde \sigma_k \1_{\left\{ k \;\leq\; \hat{r} \right\} }, &&
\end{flalign*}
\end{description}
where $r \in [1, \min(n, m)]$, and for the soft-thresholding,
the value $\lambda(\bY) > 0$ is obtained by a numerical solver
in order to minimize the $\SURE$.
Otherwise specified,
we consider $\hat{r} = \max \{k \; ; \; \tilde{\sigma}_k > c_+^{n,m}\}$, {\it i.e.},
an estimator of the rank using knowledge of the bulk edge $c_{+} \approx c_+^{n,m}$,
hence, $\1_{\left\{ k \;\leq\; \hat{r} \right\} } = \1_{\left\{ \tilde{\sigma}_k > c_+^{n,m} \right\} }$. As discussed in Section \ref{sec:bulk_exp_fam}, we compare, in these experiments, the influence of rank estimation by analyzing the performances of the same estimators when either $\hat{r} = r_{\max} = \min(n,m)$ (i.e.\ without knowledge the bulk edge, namely $c_+ = 0$), $\hat{r}  = r^{\ast}$ (oracle/true rank), $\hat{r} = r_{\rm{eff}}$ (effective rank \cite{MR3200641}) or  by \eqref{eq:estrankHT} (from hard-thresholding of singular values in  \cite{GDIEEE14}).

In order to assess the quality of $\SURE$ as an estimator of the $\MSE$, we also compare
the aforementioned approach with their oracle counterparts given by
\begin{align*}
&{\bX}_{\mathrm{soft}} =
\sum_{k=1}^{\min(m, n)} \hat{\sigma}_k \tilde\bu_{k} \tilde\bv_{k}^{t}
\quad \text{with} \quad \hat{\sigma}_k = ( \tilde{\sigma}_{k} - \lambda^{\mathrm{oracle}}(\bY))_{+}, \quad \text{and}\\
&{\bX}^r_w =
\sum_{k=1}^{r} \hat{\sigma}_k \tilde\bu_{k} \tilde\bv_{k}^{t}
\quad \text{with} \quad \hat{\sigma}_k = \tilde{\bv}_k^t \bX \tilde{\bu}_k,
\end{align*}
where
$\lambda^{\mathrm{oracle}}(\bY)$ minimizes the squared error
$
\SE
$
(non-expected risk)  over the sets and soft-thresholding approximations respectively.
Note that ${\bX}_w^r$ and ${\bX}_{\mathrm{soft}}$ are  ideal approximations of $\bX$ that cannot be used in practice but serve as benchmarks to evaluate the performances of the data-driven estimators $\hat{\bX}^r$, $\hat{\bX}_{\mathrm{soft}}$, $\hat{\bX}_{\ast}^{r}$ and $\hat{\bX}_{w}^{r}$.
In order to shed some light on the variance of these estimators, and indirectly on the variance of the SURE, we perform this experiments over $M=1000$ independent realizations of
$\bY$.

The results are reported
on Figure \ref{fig:gaussian_higher_rank}. For an estimator of the rank given either by  $\hat{r} = \max \{k \; ; \; \tilde{\sigma}_k > c_+^{n,m}\}$ (knowledge of the bulk edege), $\hat{r}  = r^{\ast}$ (oracle/true rank), $\hat{r} = r_{\rm{eff}}$ (effective rank) or  by \eqref{eq:estrankHT}, it can be observed that $\hat{\bX}^r_{w}$, $\hat{\bX}^r_{*}$ and
${\bX}^r_w$ achieve comparable performances for all
$r \in [1, \min(m, n)]$ even though the two first do not rely on
the unknown matrix $\bX$.
Similarly $\hat{\bX}_{\mathrm{soft}}$ and ${\bX}_{\mathrm{soft}}$
achieve also comparable performances showing again that the $\SURE$ accurately estimates
the $\MSE$.
In terms of error bands for the $\NMSE$, $\hat{\bX}^r_{w}$, $\hat{\bX}^r_{*}$ and ${\bX}^r_w$
outperform
$\hat{\bX}_{\mathrm{soft}}$ and ${\bX}_{\mathrm{soft}}$ provided that $r$ is large enough.
Moreover, the performance of $\hat{\bX}^r_{w}$ plateaus to its optimum
when the rank $r$ becomes large. This allows us to choose $r = \min(n, m)$
when we do not have {\it a priori} on the true or effective rank.

Interestingly, Fig.~\ref{fig:gaussian_higher_rank}.(k) shows that
when the above estimators are used without the knowledge of the bulk edge (i.e.\ by taking $c_+^{n,m} = 0$ in their computation instead of $c_+^{n,m} = 1 + \sqrt{\frac{n}{m}} $, which corresponds to the choice $\hat{r} = r_{\max} = \min(n,m)$), the performance of $\hat{\bX}^r_{w}$ actually decreases when the rank $r$ becomes too large. Indeed, it is clear from Fig.~\ref{fig:gaussian_higher_rank}.(k), that the  the error band of the NMSE of $\hat{\bX}^r_{w}$ becomes much larger as the rank $r$ increases. This illustrates that the $\SURE$ suffers from estimation variance in the case of over parametrization when $r$ becomes too large, and thus it cannot be used to estimate jointly a too large number of weights. Therefore, the knowledge of an appropriate estimator $\hat{r}$ of the rank (e.g.\ using the bulk edge) seems to provide a relevant upper bound on the number
of weights that can be jointly and robustly estimated with the $\SURE$.

\begin{figure}[!t]
\centering
\subfigure[]{%
  \includegraphics[width=0.18\linewidth,viewport=1 130 100 200, clip]{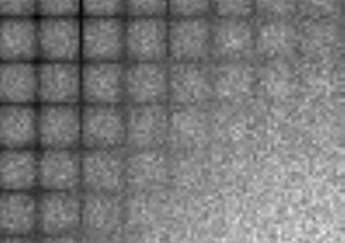}%
  \hspace{-0.6em}\textcolor{blue}{$\bullet$}%
}\hfill
\subfigure[]{%
  \includegraphics[width=0.18\linewidth,viewport=1 130 100 200, clip]{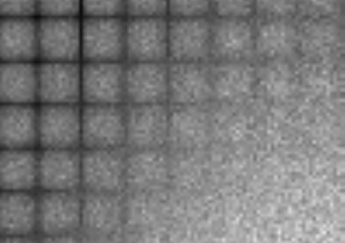}%
  \hspace{-0.6em}\textcolor{cyan}{$\bullet$}%
}\hfill
\subfigure[]{%
  \includegraphics[width=0.18\linewidth,viewport=1 130 100 200, clip]{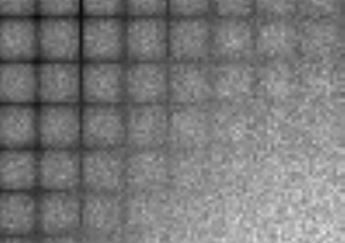}%
  \hspace{-0.6em}\textcolor{cyan}{$\boldsymbol\circ$}%
}\hfill
\subfigure[]{%
  \includegraphics[width=0.18\linewidth,viewport=1 130 100 200, clip]{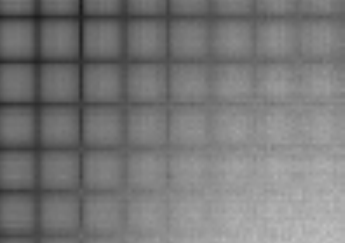}%
  \hspace{-0.6em}\textcolor{magenta}{$\bullet$}%
}\hfill
\subfigure[]{%
  \includegraphics[width=0.18\linewidth,viewport=1 130 100 200, clip]{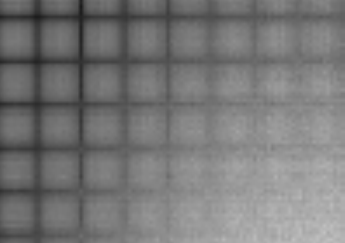}%
  \hspace{-0.6em}\textcolor{magenta}{$\boldsymbol\circ$}%
}\\
\subfigure[]{%
  \includegraphics[width=0.18\linewidth,viewport=1 130 100 200, clip]{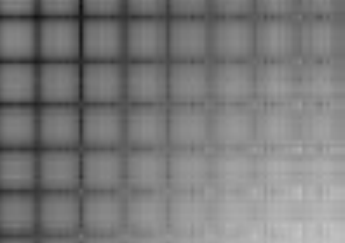}%
  \hspace{-0.6em}\textcolor{chocolate}{$\bullet$}%
}\hfill
\subfigure[]{%
  \includegraphics[width=0.18\linewidth,viewport=1 130 100 200, clip]{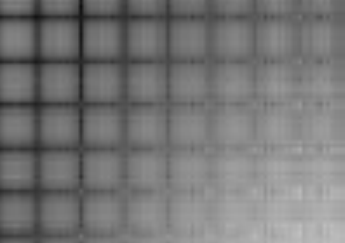}%
  \hspace{-0.6em}\textcolor{green}{$\bullet$}%
}\hfill
\subfigure[]{%
  \includegraphics[width=0.18\linewidth,viewport=1 130 100 200, clip]{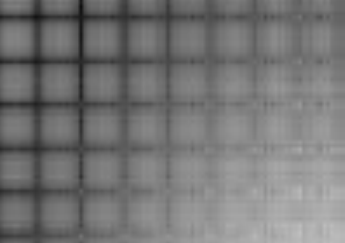}%
  \hspace{-0.6em}\textcolor{green}{$\boldsymbol\circ$}%
}\hfill
\subfigure[]{%
  \includegraphics[width=0.18\linewidth,viewport=1 130 100 200, clip]{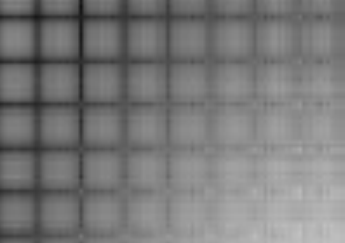}%
  \hspace{-0.6em}\textcolor{black}{$\bullet$}%
}\hfill
\subfigure[]{%
  \includegraphics[width=0.18\linewidth,viewport=1 130 100 200, clip]{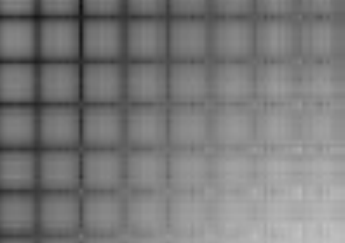}%
  \hspace{-0.6em}\textcolor{black}{$\boldsymbol\circ$}%
}\\
\hfill%
\subfigure[]{\includegraphics[height=0.24\linewidth]{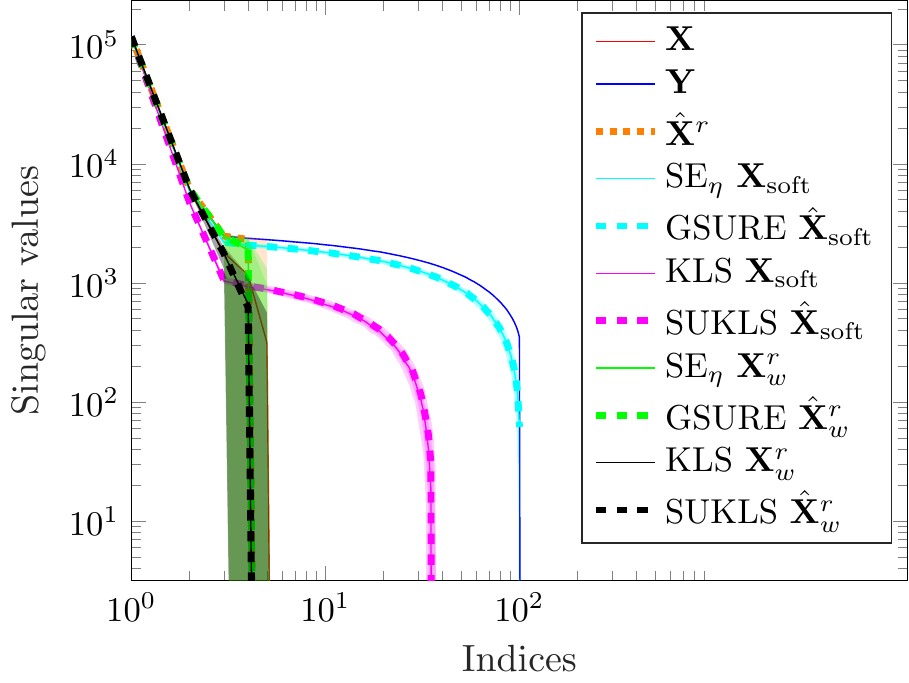}}\hfill%
\subfigure[]{\includegraphics[height=0.24\linewidth]{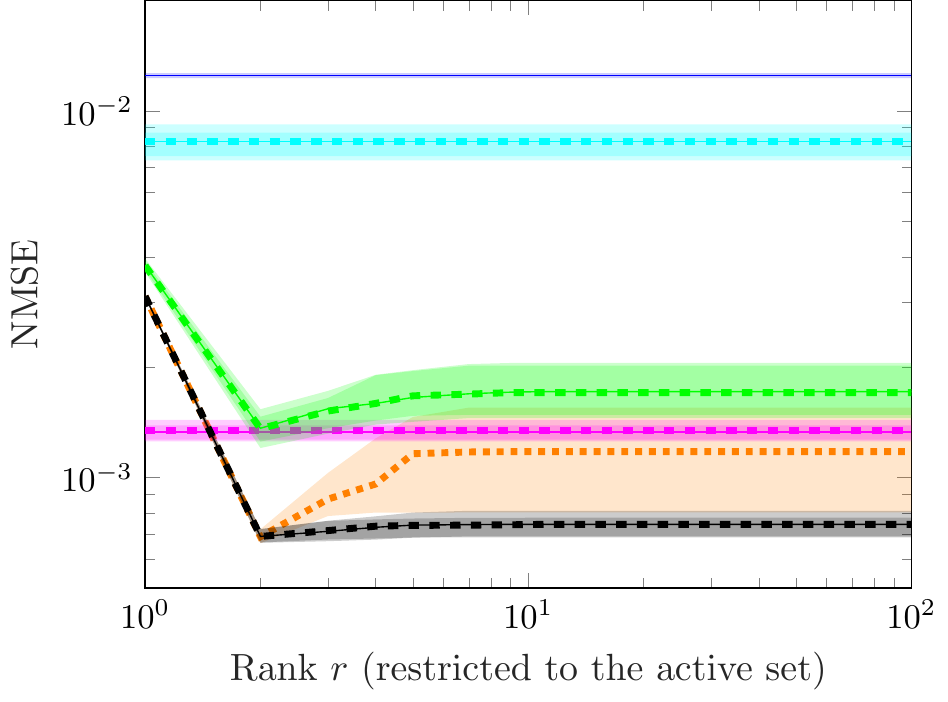}}\hfill%
\subfigure[]{\includegraphics[height=0.24\linewidth]{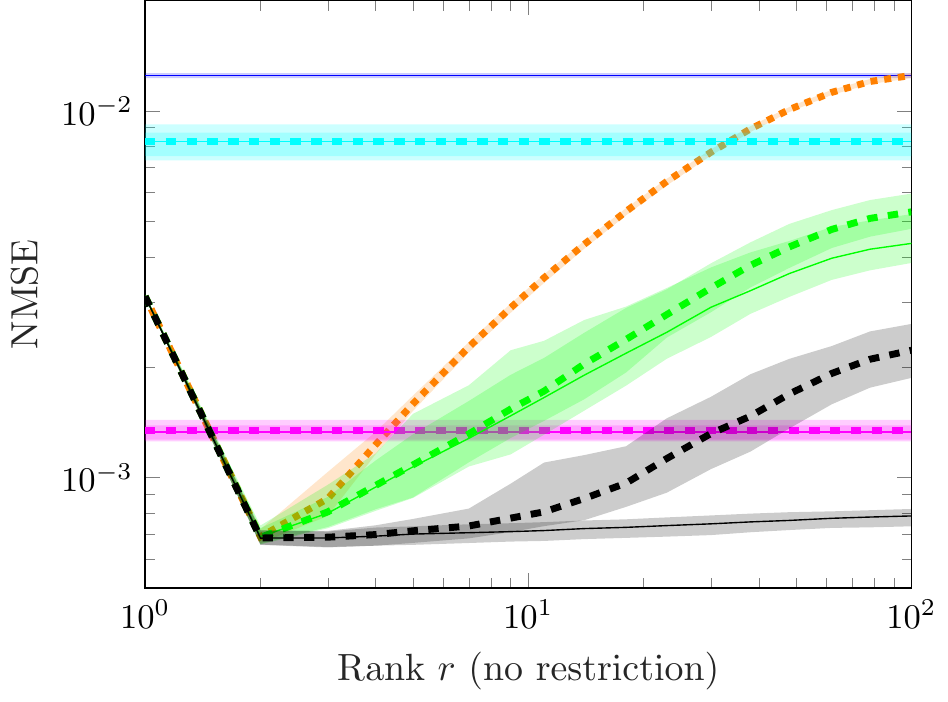}}%
\hfill{ }
\\
\hfill%
\hspace{0.32\linewidth}\hfill%
\subfigure[]{\includegraphics[height=0.24\linewidth]{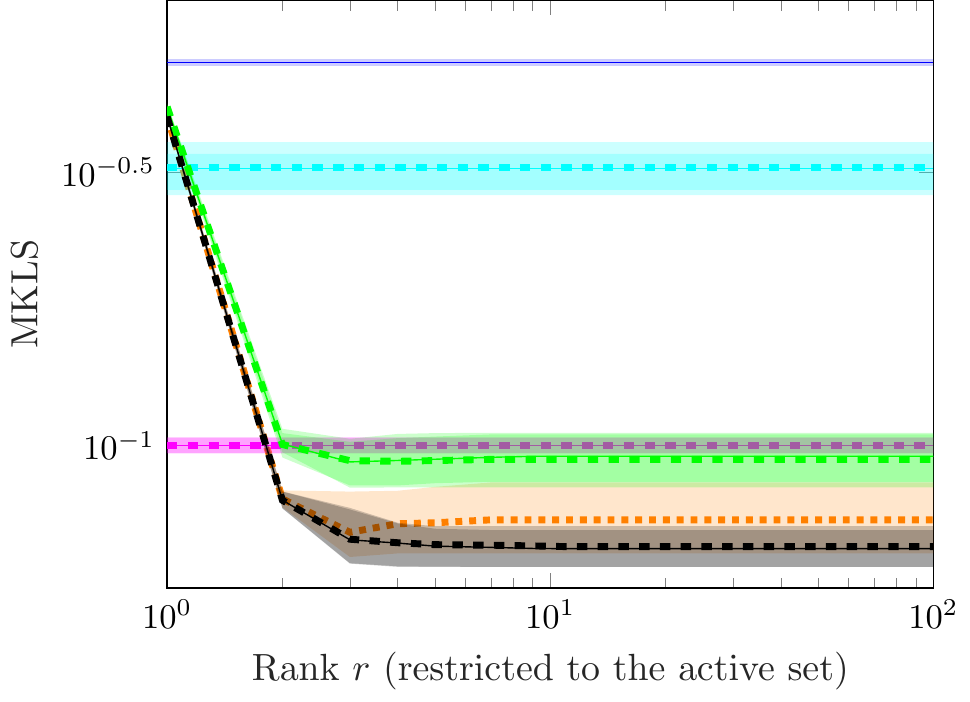}}\hfill%
\subfigure[]{\includegraphics[height=0.24\linewidth]{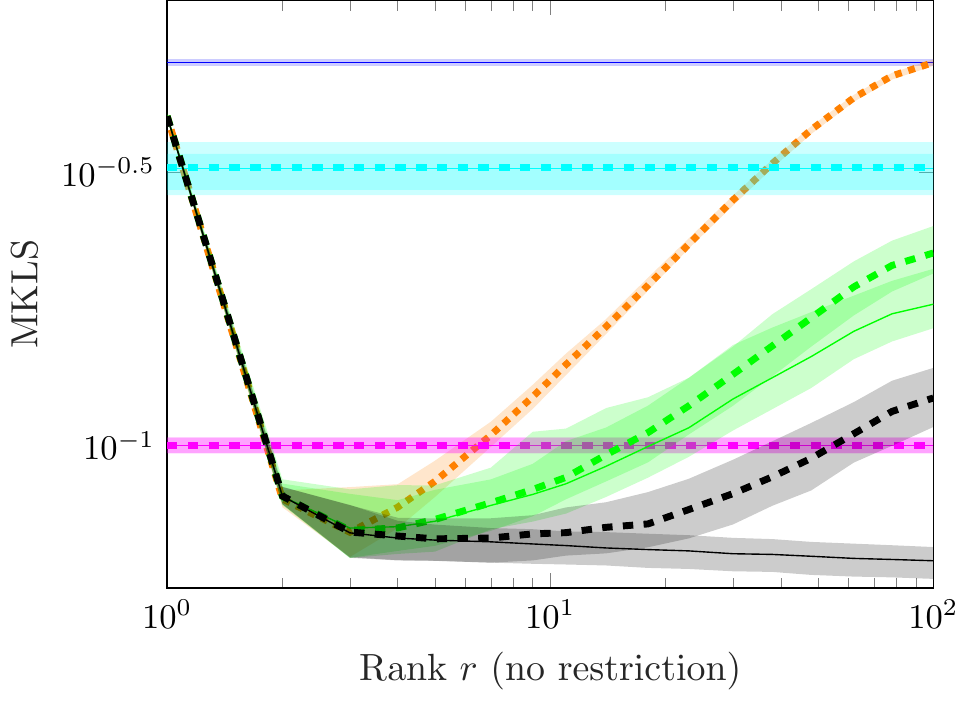}}%
\hfill{ }
\\[-0.5em]
\caption{
  (a) A single realization of corrupted version by Gamma noise ($L=80$) with zoom  on a $100 \times 200$ matrix.
  (b,c,d,e) Oracle soft-thresholding $ \bX_{\mathrm{soft}}$
  and data-driven soft-thresholding $\hat {\bX}_{\mathrm{soft}}$
  respectively for $\SE_\eta$, $\GSURE$, $\KLS$ and $\SUKLS$.
  (f) PCA $\hat{\bX}^{r_{\max}}$ with full rank approximation  i.e.\ $r_{\max} = \min(n, m)$.
  (g,h,i,j) Oracle full rank approximation
  $\bX^{r_{\max}}_w$, and data-driven full rank estimation $\hat \bX^{r_{\max}}_{w}$
  respectively for $\SE_\eta$, $\GSURE$, $\KLS$ and $\SUKLS$.
  (k) Their corresponding singular values averaged over $M = 100$ noise realizations.
  (l,m) $\NMSE$ averaged over $M = 100$ noise realizations
  as a function of the rank $r$
  with and without using the active set.
  (n,o) Same but with respect to $\MKLS$.
}
\label{fig:gamma_higher_rank}
\vspace{1em}
\end{figure}

\begin{figure}[!t]
\centering
\subfigure[]{%
  \includegraphics[width=0.18\linewidth,viewport=1 130 100 200, clip]{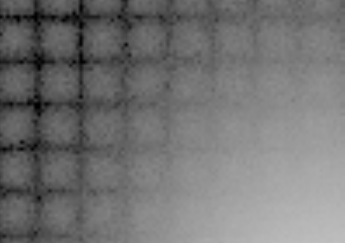}%
  \hspace{-0.6em}\textcolor{blue}{$\bullet$}%
}\hfill
\subfigure[]{%
  \includegraphics[width=0.18\linewidth,viewport=1 130 100 200, clip]{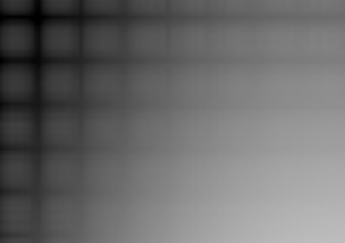}%
  \hspace{-0.6em}\textcolor{cyan}{$\bullet$}%
}\hfill
\subfigure[]{%
  \includegraphics[width=0.18\linewidth,viewport=1 130 100 200, clip]{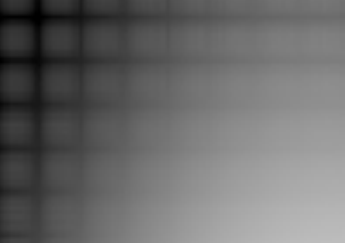}%
  \hspace{-0.6em}\textcolor{cyan}{$\boldsymbol\circ$}%
}\hfill
\subfigure[]{%
  \includegraphics[width=0.18\linewidth,viewport=1 130 100 200, clip]{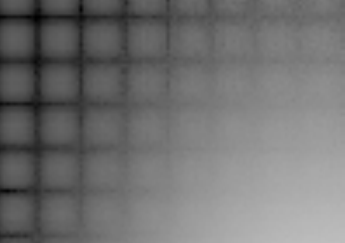}%
  \hspace{-0.6em}\textcolor{magenta}{$\bullet$}%
}\hfill
\subfigure[]{%
  \includegraphics[width=0.18\linewidth,viewport=1 130 100 200, clip]{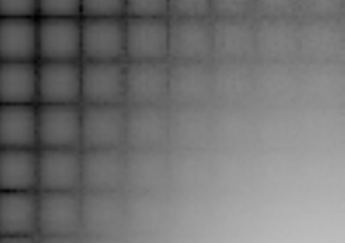}%
  \hspace{-0.6em}\textcolor{magenta}{$\boldsymbol\circ$}%
}\\
\subfigure[]{%
  \includegraphics[width=0.18\linewidth,viewport=1 130 100 200, clip]{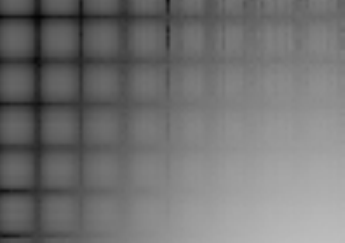}%
  \hspace{-0.6em}\textcolor{chocolate}{$\bullet$}%
}\hfill
\subfigure[]{%
  \includegraphics[width=0.18\linewidth,viewport=1 130 100 200, clip]{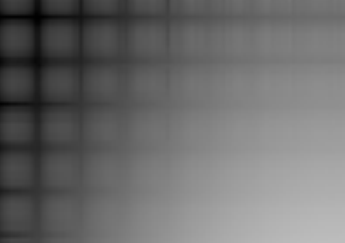}%
  \hspace{-0.6em}\textcolor{green}{$\bullet$}%
}\hfill
\subfigure[]{%
  \includegraphics[width=0.18\linewidth,viewport=1 130 100 200, clip]{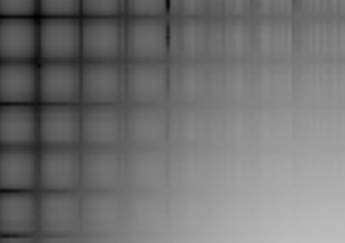}%
  \hspace{-0.6em}\textcolor{green}{$\boldsymbol\circ$}%
}\hfill
\subfigure[]{%
  \includegraphics[width=0.18\linewidth,viewport=1 130 100 200, clip]{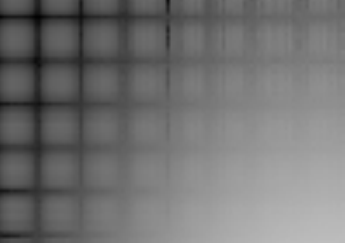}%
  \hspace{-0.6em}\textcolor{black}{$\bullet$}%
}\hfill
\subfigure[]{%
  \includegraphics[width=0.18\linewidth,viewport=1 130 100 200, clip]{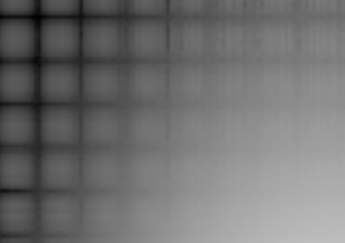}%
  \hspace{-0.6em}\textcolor{black}{$\boldsymbol\circ$}%
}\\
\hfill%
\subfigure[]{\includegraphics[height=0.24\linewidth]{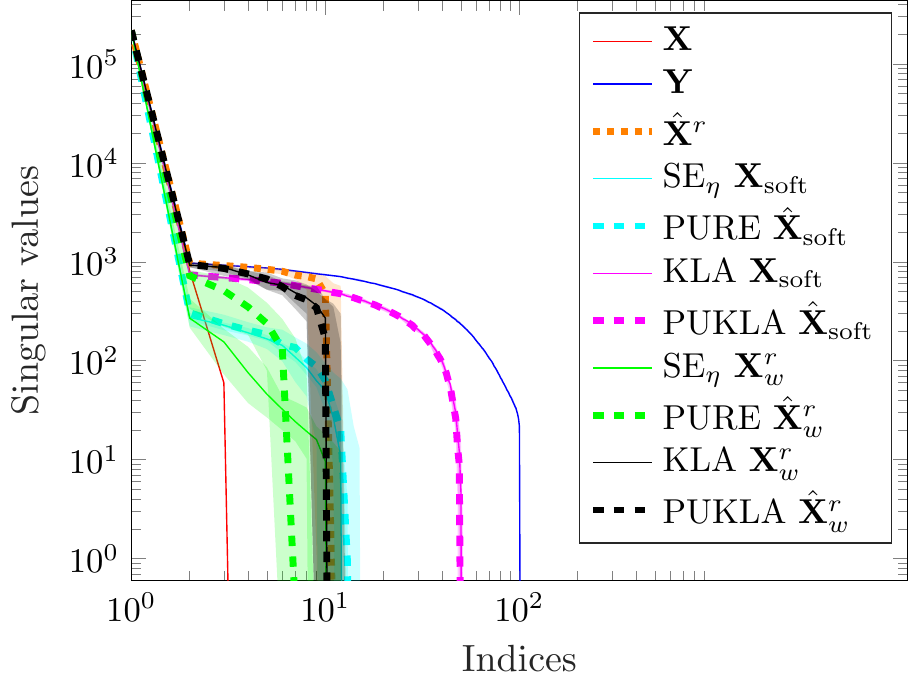}}\hfill%
\subfigure[]{\includegraphics[height=0.24\linewidth]{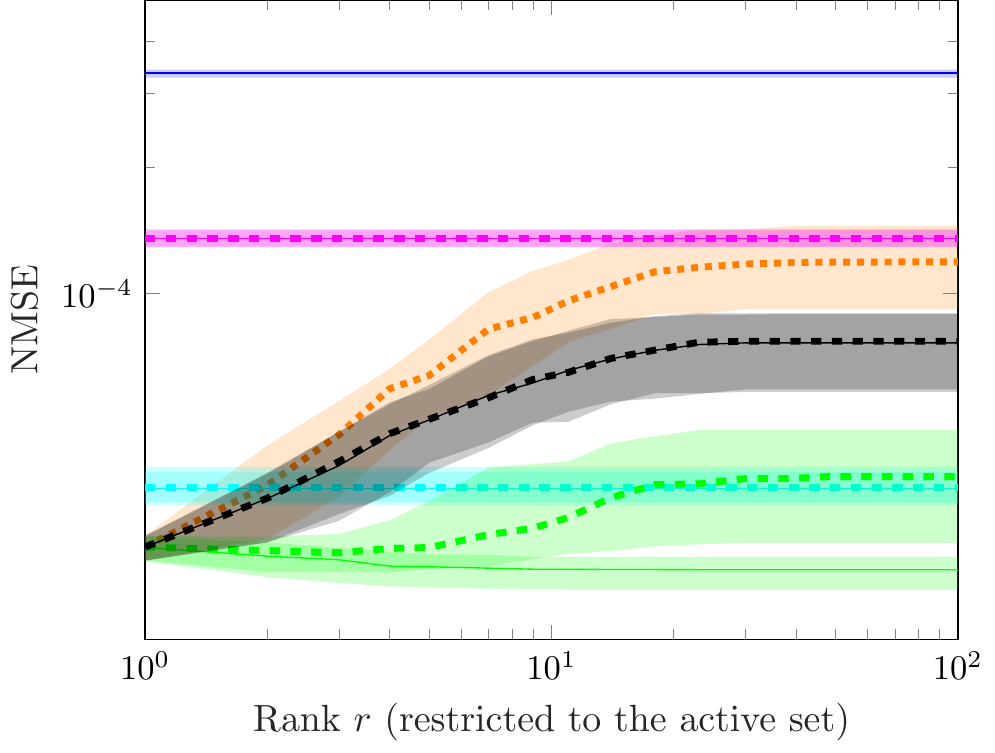}}\hfill%
\subfigure[]{\includegraphics[height=0.24\linewidth]{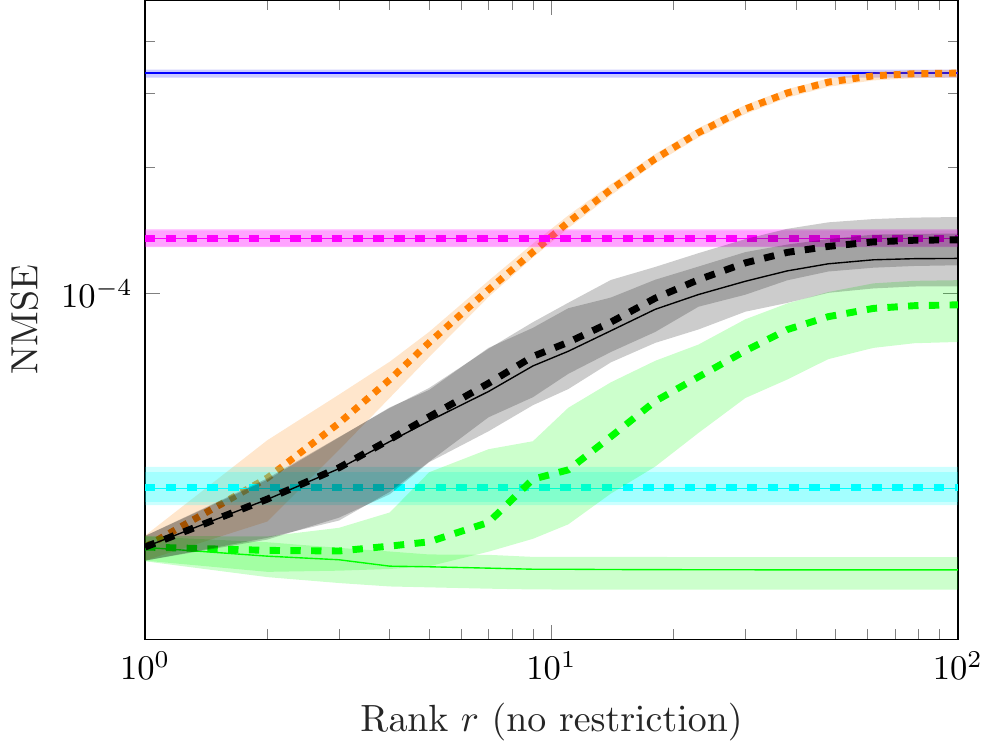}}%
\hfill{ }
\\
\hfill%
\hspace{0.32\linewidth}\hfill%
\subfigure[]{\includegraphics[height=0.24\linewidth]{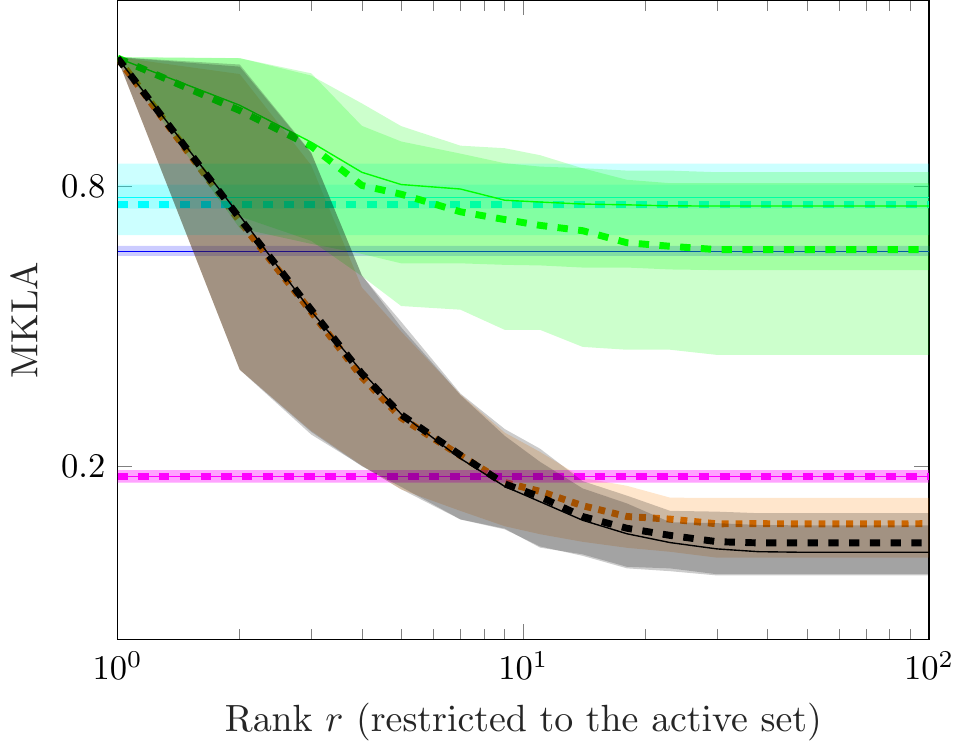}}\hfill%
\subfigure[]{\includegraphics[height=0.24\linewidth]{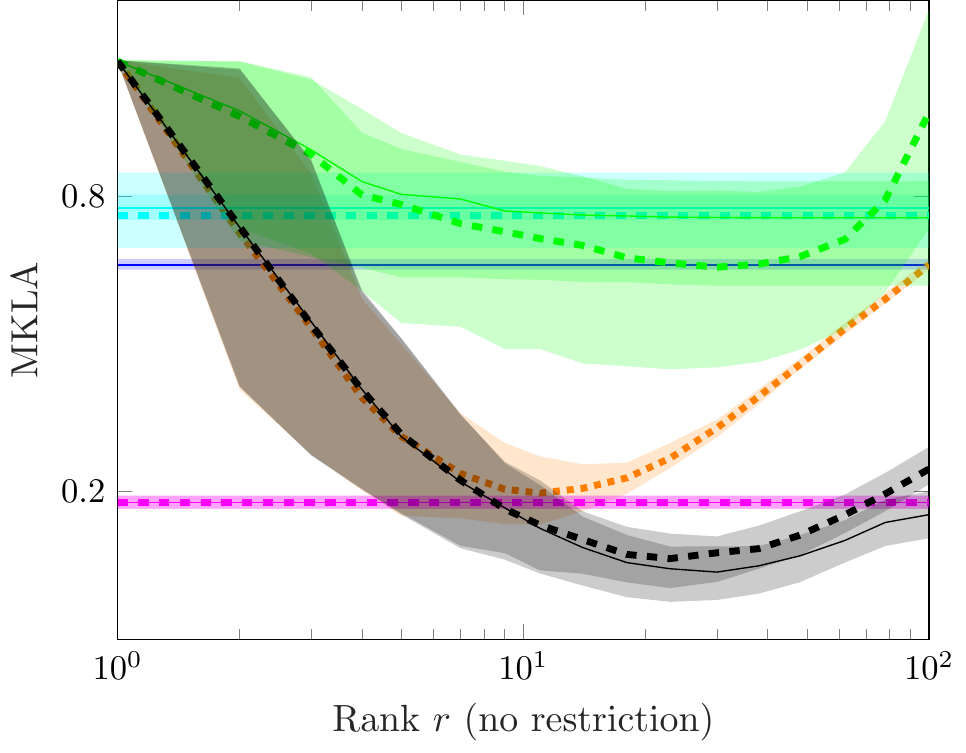}}%
\hfill{ }
\\[-0.5em]
\caption{
  (a) A single realization of corrupted version by Poisson noise with zoom  on a $100 \times 200$ noise-free matrix
  (b,c,d,e) Oracle soft-thresholding $ \bX_{\mathrm{soft}}$
  and data-driven soft-thresholding $\hat {\bX}_{\mathrm{soft}}$
  respectively for $\SE$, $\PURE$, $\KLA$ and $\PUKLA$.
  (f) PCA $\hat{\bX}^{r_{\max}}$ with full rank approximation  i.e.\ $r_{\max} = \min(n, m)$.
  (g,h,i,j) Oracle full rank approximation
  $\bX^{r_{\max}}_w$, and data-driven full rank estimation $\hat \bX^{r_{\max}}_{w}$
  respectively for $\SE$, $\PURE$, $\KLA$ and $\PUKLA$.
  (k) Their corresponding singular values averaged over $200$ noise realizations.
  (l,m) $\NMSE$ averaged over $200$ noise realizations
  as a function of the rank $r$
  with and without using the active set.
  (n,o) Same but with respect to $\MKLA$.
  (Matrix entries are displayed in log-scale for better
  visual assessment.)
}
\label{fig:poisson_higher_rank}
\end{figure}

\subsubsection*{Gamma and Poisson measurements}

Let us now consider the case where the entries of $\bY_{ij} > 0$ of the data matrix $\bY$ are independently sampled from a Gamma or Poisson distribution with mean $\bX_{ij} > 0$.  We again consider estimators of the form \eqref{eq:hatXw1_eps}. In this context, we compare the following spectral shrinkage estimators,
set for $\varepsilon = 10^{-6}$, as:
\begin{description}
\item[$\bullet$] PCA shrinkage
\begin{flalign*}
  \hat{\bX}^r = \sum_{k = 1}^r \max\left[\tilde \sigma_{k} \tilde{\bu}_{k} \tilde{\bv}_{k}^{t}, \varepsilon \right]
  \quad\text{with}\quad
  \hat{\sigma}_k = \tilde{\sigma}_k \1_{\left\{ k \in \tilde{s} \right\} }, &&
  &&
\end{flalign*}
\item[$\bullet$] GSURE/SUKLS/PURE/SUKLA driven soft-thresholding
\begin{flalign*}
\hat{\bX}_{\mathrm{soft}} = \sum_{k = 1}^{\min(m, n)} \max\left[\hat{\sigma}_k \tilde{\bu}_{k} \tilde{\bv}_{k}^{t}, \varepsilon \right]
\quad \text{with} \quad
\hat{\sigma}_k =
( \tilde \sigma_{k} - \lambda(\bY))_{+}, &&
  \end{flalign*}
  \item[$\bullet$] GSURE/SUKLS/PURE/SUKLA driven weighted estimator
\begin{flalign*}
\hat{\bX}_w^r = \sum_{k = 1}^r \max\left[\hat{\sigma}_k \tilde{\bu}_{k} \tilde{\bv}_{k}^{t}, \varepsilon \right]
\quad \text{with} \quad
\hat{\sigma}_k = w_k(\bY) \tilde{\sigma}_k \1_{\left\{ k \in \tilde{s} \right\} }, &&
\end{flalign*}
\end{description}
where $r \in [1, \min(n, m)]$,
and $\tilde s$ is the approximated active subset
as defined in Section \ref{sec:bulk_exp_fam}.
For the soft-thresholding,
the value $\lambda(\bY) > 0$ is obtained by a numerical solver in order to minimize
either the $\GSURE$ or the $\SUKLS$ criterion (in the Gamma case)
and either the $\PURE$ or the $\PUKLA$ criterion (in the Poisson case).
As shown in Section \ref{sec:optexp}, in the case of Gamma (resp.~Poisson) measurements,
the value of $w_{k}(\bY)$ for $k \in \tilde s$ which minimizes the $\GSURE$ (resp.~$\PURE$) or the $\SUKLS$ (resp.~$\PUKLA$),
cannot be obtained in closed form.
As an alternative, we adopt a greedy one-dimensional optimization strategy
starting from the matrix $\tilde \sigma_1 \tilde\bu_1 \tilde\bv_1^t$
and next updating the weights $w_{\ell}$ sequentially by starting $\ell=1$ to $\ell=\min(n, m)$, with the constraint that, for all $\ell \notin \tilde{s}$, the weight $w_{\ell}$  is set to zero.
To this end, we resort to one-dimensional optimization techniques
in the interval $[0, 1]$
using Matlab's command \texttt{fminbnd}.
This strategy is used for $\GSURE$, $\SUKLS$, $\PURE$ and $\PUKLA$
by evaluating them as described in Section \ref{sec:algo}.
As in the Gaussian setting, we compare this spectral estimators
with their oracle counterparts given by
\begin{align*}
&{\bX}_{\mathrm{soft}} = \sum_{k = 1}^{\min(m, n)} \max\left[\hat{\sigma}_k \tilde{\bu}_{k} \tilde{\bv}_{k}^{t}, \varepsilon \right]
\quad \text{with} \quad
\hat{\sigma}_k =
( \tilde \sigma_{k} - \lambda^{\mathrm{oracle}}(\bY))_{+}, \quad \text{and}\\
&{\bX}_w^r = \sum_{k = 1}^r \max\left[\hat{\sigma}_k \tilde{\bu}_{k} \tilde{\bv}_{k}^{t}, \varepsilon \right]
\quad \text{with} \quad
\hat{\sigma}_k = w_k^{\mathrm{oracle}}(\bY) \tilde{\sigma}_k \1_{\left\{ k \in \tilde{s} \right\} }.
\end{align*}
where ${w}_{k}^{\mathrm{oracle}}( \bX )$ and $\lambda^{\mathrm{oracle}}(\bY)$ minimizes
one of the objective $\SE_\eta$, $\KLS$, $\SE$ or $\KLA$ (non-expected risks)
over the set of matrices sharing with $\bY$ the same $r$ first left and right singular vectors, and soft-thresholding approximations respectively.
Note again that ${\bX}^r_w$ and ${\bX}_{\mathrm{soft}}$ are ideal approximations of $\bX$ that cannot be used in practice but serve as benchmarks to evaluate the performances of the data-driven estimators $\hat{\bX}^r_{w}$ and $\hat{\bX}_{\mathrm{soft}}$.

The results for the Gamma noise are reported
on Figure \ref{fig:gamma_higher_rank}.
As in the Gaussian setting, it can be observed that $\hat{\bX}^r_{w}$ and
${\bX}^r_w$ achieve comparable performances,
as well as $\hat{\bX}_{\mathrm{soft}}$ and ${\bX}_{\mathrm{soft}}$
showing that the $\GSURE$ (resp.~$\SUKLS$)
accurately estimates the $\MSE_\eta$ (resp.~$\KLS$).
Visual inspection of the restored matrices tends to show that
the estimators driven by $\MSE_\eta$ or $\GSURE$ produce
less relevant results compared to $\KLS$ or $\SUKLS$,
as confirmed by the curves of $\NMSE$ and $\MKLS$.
Performance in terms of $\NMSE$ also illustrates that
minimizers of $\SE_\eta$ do not coincides with those of $\SE$.
As in the Gaussian setting, $\hat{\bX}^r_{w}$ and ${\bX}^r$
outperform $\hat{\bX}_{\mathrm{soft}}$, ${\bX}_{\mathrm{soft}}$ and standard PCA $\hat{\bX}^r$ provided that $r$ is large enough.
Moreover, the performance of $\hat{\bX}^r_{w}$ obtained with $\KL$ objectives
plateaus to its optimum when the rank $r$ becomes large.
Again, this allows us to choose $r = \min(n, m)$
when we do not have {\it a priori} on the true rank $r^\star$.

The results for the Poisson noise are reported
on Figure \ref{fig:poisson_higher_rank}.
The conclusions are similar to the Gaussian and Gamma cases.
Obviously, the $\NMSE$ is smaller for approximations that
minimizes $\SE$ (or $\PURE$) than for
those minimizing $\KLA$ (or $\PUKLA$).
However, visual inspection
of the obtained matrices tends to demonstrate that minimizing such
objectives might be less relevant than minimizing
$\KL$ objectives. In this setting, the performance of
$\hat \bX_w^r$ is on a par with the one of $\hat \bX_{\mathrm{soft}}$ based on $\PUKLA$.
In fact, for other choices of matrices $\bX$,
$\hat \bX_w^r$ based on $\PUKLA$ might improve, in terms of $\MKLS$,
much more on $\hat \bX_{\mathrm{soft}}$,
and might improve not as much on $\hat \bX_w^r$  based on $\PURE$.
Nevertheless, whatever $\bX$, we observed that $\hat \bX_w^r$ driven by $\PUKLA$
always reaches at least as good performance in terms of $\MKLS$ as
the best of $\hat \bX_w^r$ driven by $\SE$ and $\hat \bX_{\mathrm{soft}}$.

Fig.~\ref{fig:gamma_higher_rank}.(m), Fig.~\ref{fig:gamma_higher_rank}.(o),
Fig.~\ref{fig:poisson_higher_rank}.(m) and Fig.~\ref{fig:poisson_higher_rank}.(o) show that
when the above estimators are used without the active set
(i.e., by choosing $\tilde s = [1, \min(n, m]$),
the performance of $\hat{\bX}^r_{w}$ actually decreases when the rank $r$ becomes too large.
As in the Gaussian setting, this can be explained by the fact that the $\GSURE$,
$\SUKLS$, $\PURE$ and $\PUKLA$ suffer from estimation variance in the case of over parametrization,
hence, they cannot be used to estimate jointly a too large number of weights.
The active set $\tilde{s}$ (in the same manner as the bulk edge)
seems to provide a relevant selection of
the weights that can be jointly and robustly estimated in a data driven way.

\subsection{Signal matrix with equal singular values and increasing rank} \label{sec:rankincreasing}

We  propose now to highlight potential limitations of our approach in the situation where the rank $r^{\ast}$ of the matrix $\bX = \sum_{k=1}^{r^{\ast}} \sigma_{k} \bu_{k} \bv_{k}^{t} $ is let growing and all positive singular values $\sigma_{k}$ of $\bX$ are equal, namely
\begin{equation}
\bY = \sum_{k=1}^{r^{\ast}}  \sigma_{k}  \bu_{k} \bv_{k}^{t} + \bW \quad \text{with} \quad  \sigma_{k}  = \gamma c_{n,m}^{1/4} \mbox{ for all } 1 \leq k \leq r^{\ast}, \label{eq:modincreasingrank}
\end{equation}
where $\bu_{k} \in \R^{n} $ and $\bv_{k} \in \R^{m}$ are vectors with unit norm that are fixed, $c_{n,m} = \frac{n}{m}$ and $\bW$ is centered random matrix whose entries are iid Gaussian variables with variance $\tau^{2} = 1/m$. We again choose to fix $n = 100$ and $m = 200$, while the true rank is $r^{\ast}$ let growing from 1 to $\min(n,m)$ in the following numerical experiments. The constant $\gamma$ is chosen to be larger than 1. Hence, eq.~\eqref{eq:modincreasingrank}  corresponds to the Gaussian spiked population model in the setting where all positive singular values are equal and larger than the threshold $c_{n,m}^{1/4}$. The choice $ \sigma_{k}  = \gamma c_{n,m}^{1/4} $ with $\gamma > 1$ is motivated by the results from Proposition \ref{prop:sv}.

For a given value of the true rank $r^{\ast}$, we performed experiments involving $M=1000$ realizations from model \eqref{eq:modincreasingrank} to compare the NMSE of the  estimators  by  oracle soft-thresholding $ \bX_{\mathrm{soft}}$,  data-driven soft-thresholding $ \hat{\bX}_{\mathrm{soft}}$, PCA full rank $\hat{\bX}^{r_{\max}}$ i.e.\ $r_{\max} = \min(n, m)$, oracle full rank approximation $\bX^{r_{\max}}_w$, and data-driven full rank estimation $\hat \bX^{r_{\max}}_{w}$ and $\hat \bX^{r_{\max}}_{*}$. All these estimators have been introduced in Section \ref{sec:rankmoretwo}.

In Figure \ref{fig:gaussian_increasing_true_rank}, we report the results of numerical experiments by displaying errors bars  of the NMSE of these estimators as functions of the true rank $r^{\ast}$. For low values of the true rank ($r^{\ast} \leq 20$), the data-driven estimators $\hat \bX^{r_{\max}}_{w}$ (our approach) and $\hat \bX^{r_{\max}}_{*}$ (shrinkage rule from  \cite{GavishDonoho})  achieve the best performances that are similar in term of median value of the NMSE. However,
our approach has some limitations with respect to the performances of the estimator from  \cite{GavishDonoho} or data-driven soft-thresholding \cite{MR3105401}  in the setting where the signal matrix has equal positive singular values and when its rank is increasing. Moreover, the error bands of the NMSE for our approach becomes significantly larger than those of the other data-driven estimators when the true rank $r^{\ast}$ increases. This illustrates that SURE minimization may lead to estimators with a high variance in the case of over parametrization, that is, when there exists a large number of significant and close singular values in the signal matrix.

\begin{figure}[!t]
\centering
\subfigure[$\gamma = 6$]{\includegraphics[height=0.27\linewidth]{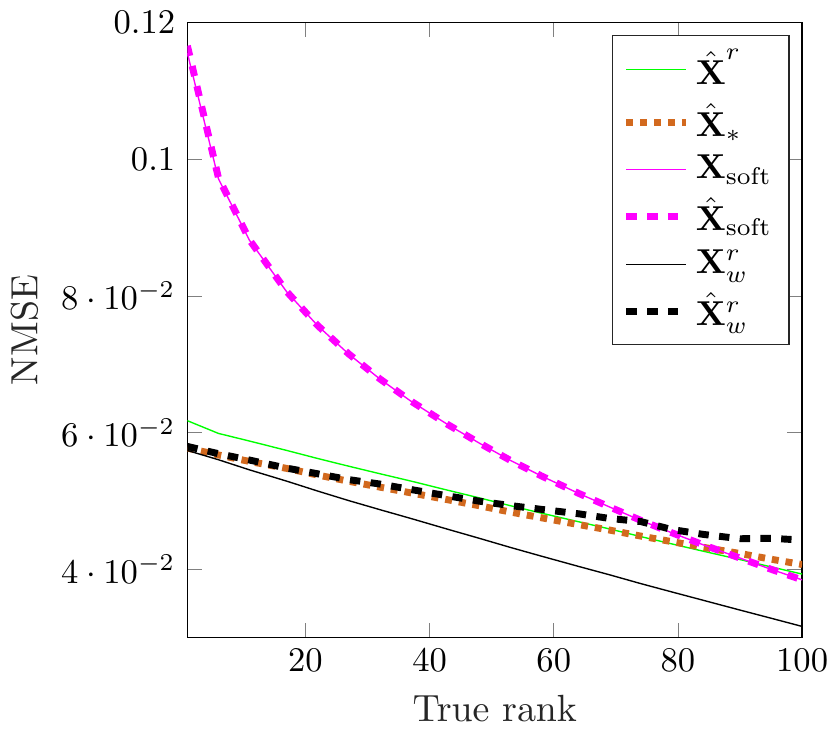}}%
\subfigure[$\gamma = 6$]{\includegraphics[height=0.27\linewidth]{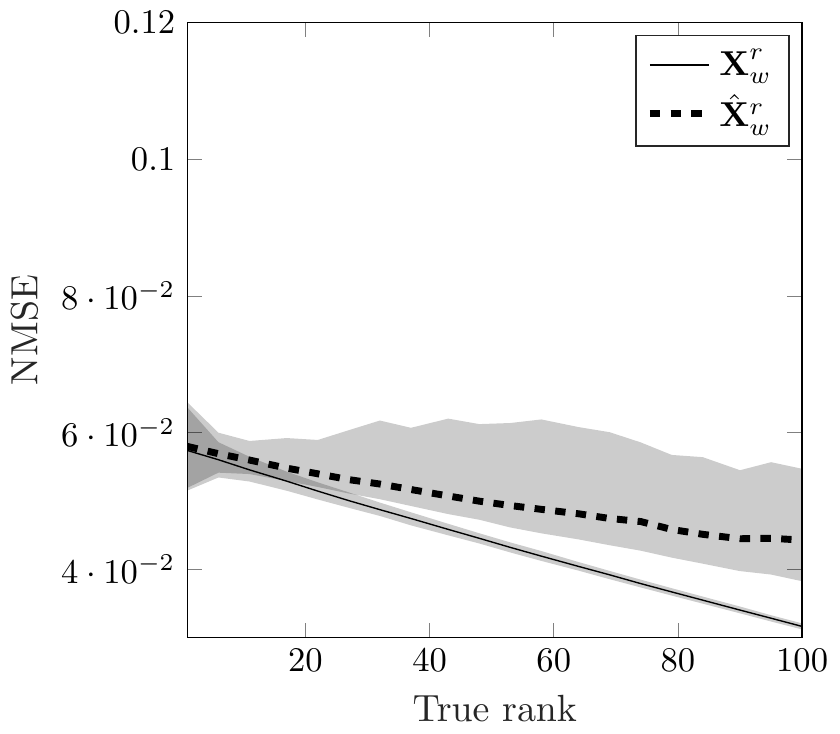}}%
\subfigure[$\gamma = 6$]{\includegraphics[height=0.27\linewidth]{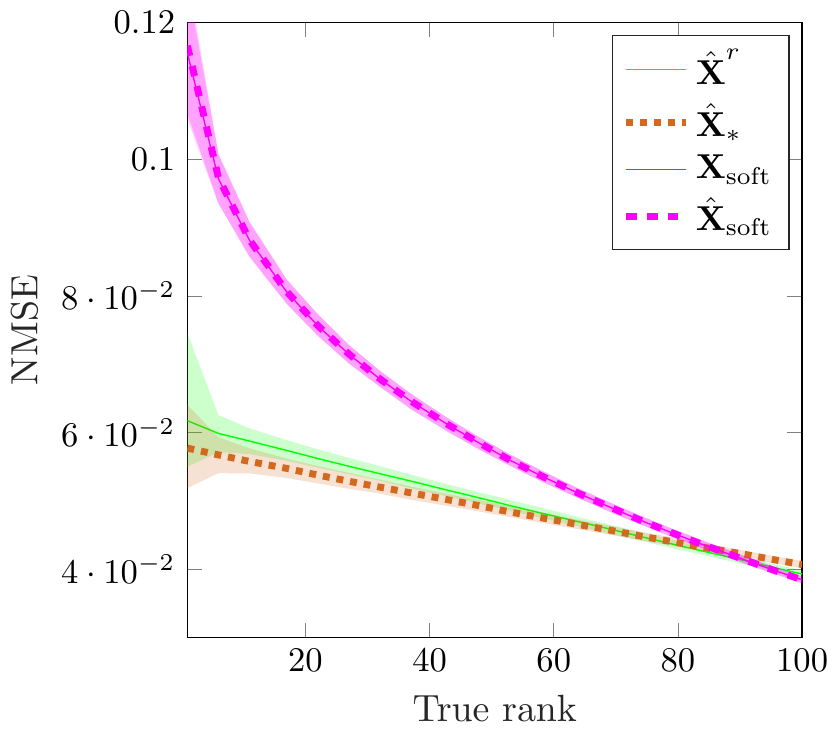}}%
\\
\subfigure[$\gamma = 4$]{\includegraphics[height=0.27\linewidth]{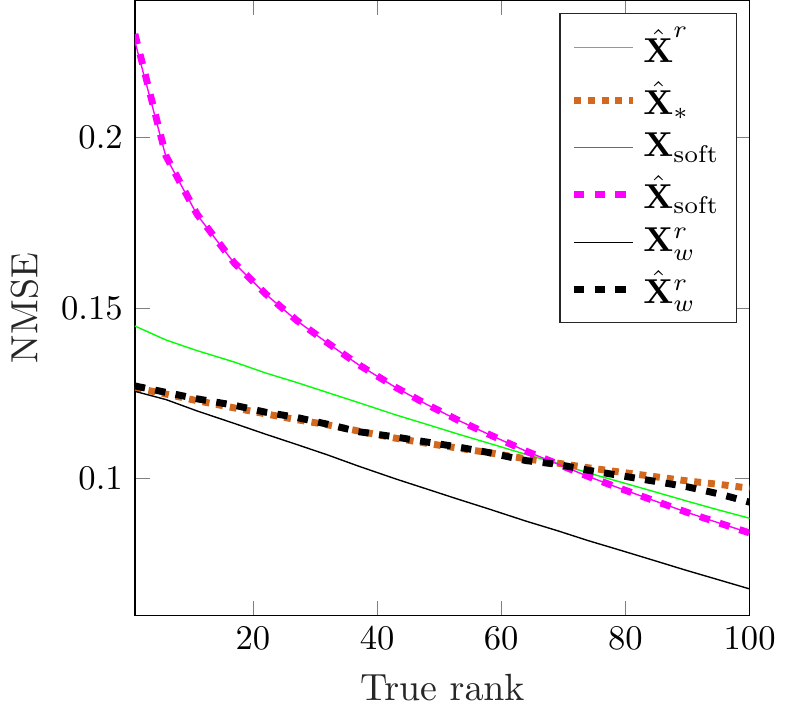}}%
\subfigure[$\gamma = 4$]{\includegraphics[height=0.27\linewidth]{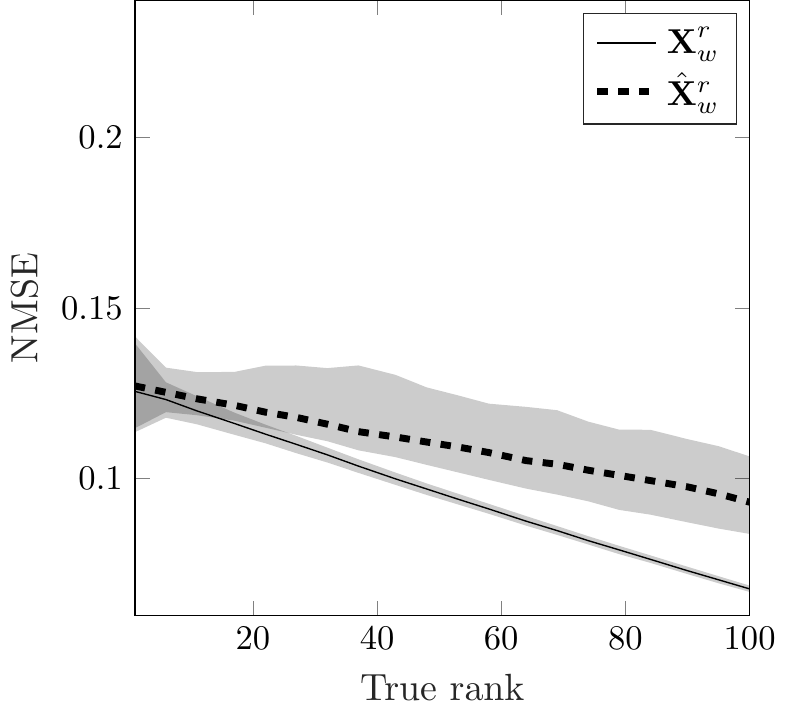}}%
\subfigure[$\gamma = 4$]{\includegraphics[height=0.27\linewidth]{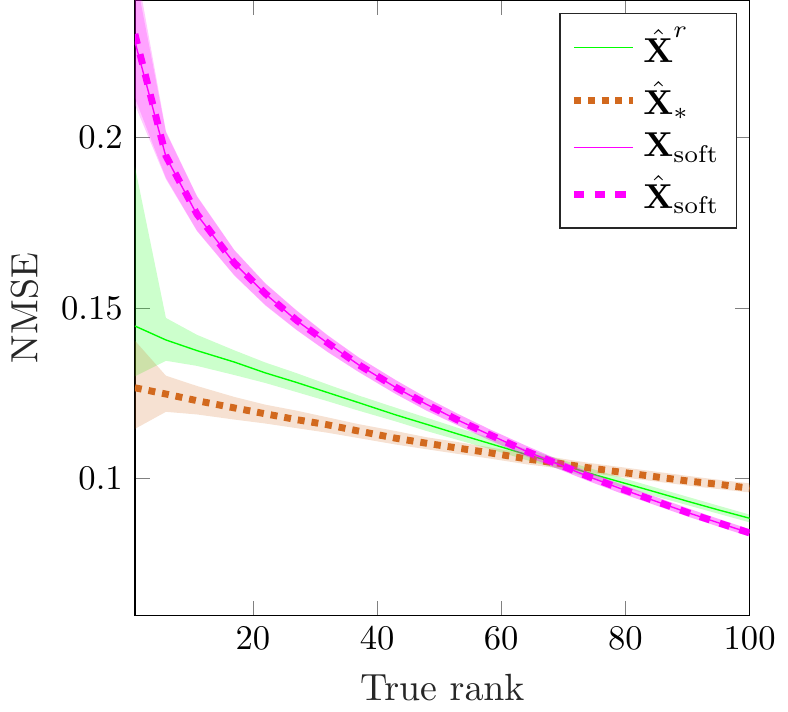}}%
\\
\subfigure[$\gamma = 2$]{\includegraphics[height=0.27\linewidth]{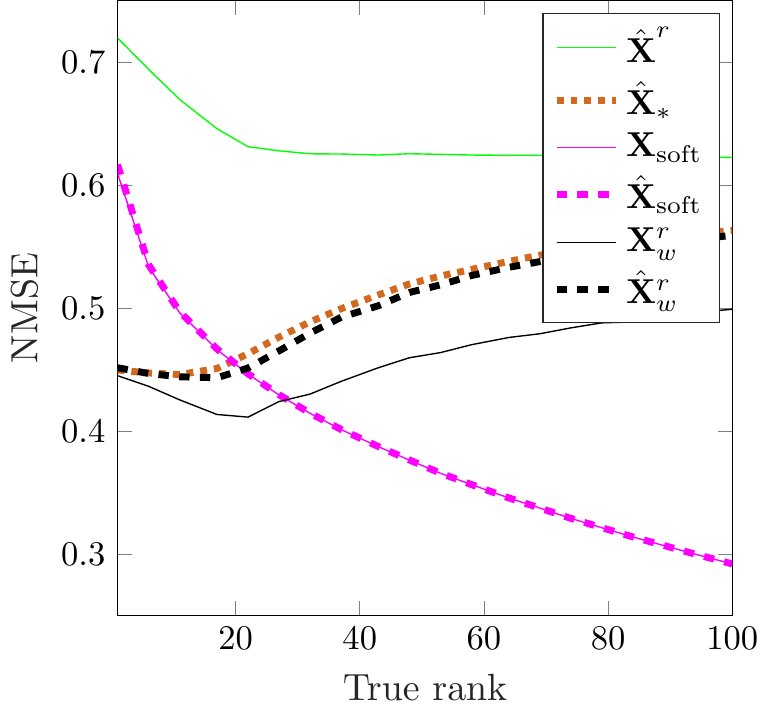}}%
\subfigure[$\gamma = 2$]{\includegraphics[height=0.27\linewidth]{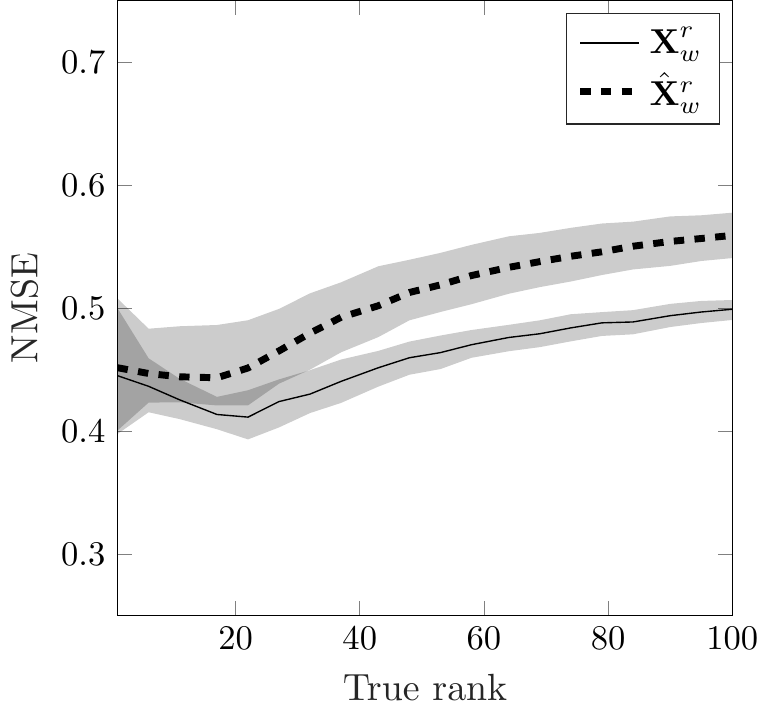}}%
\subfigure[$\gamma = 2$]{\includegraphics[height=0.27\linewidth]{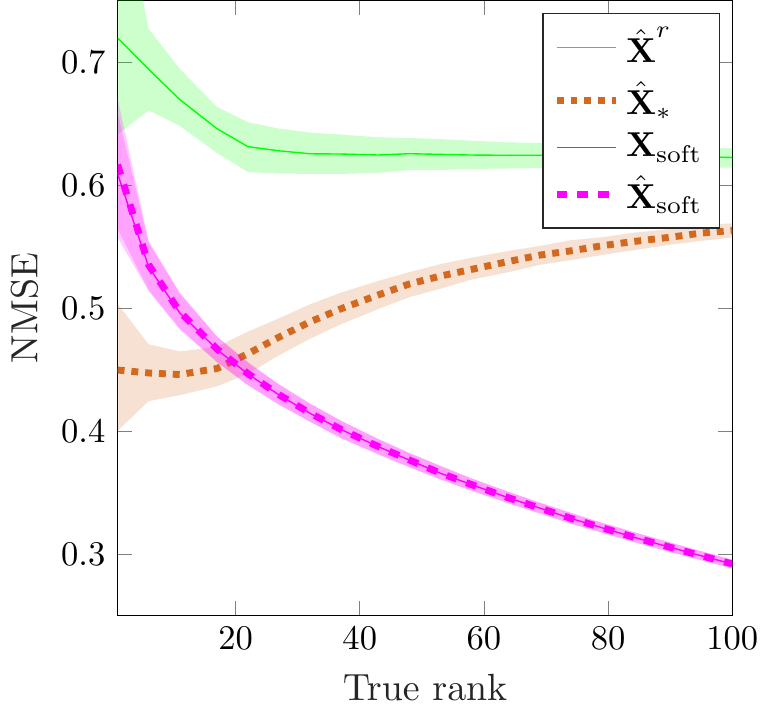}}%
\\[-0.5em]
\caption{Comparison of NMSE as a function of the true rank $r^{\ast}$ in model \eqref{eq:modincreasingrank} for different values of $\gamma$ for the estimator by  oracle soft-thresholding $ \bX_{\mathrm{soft}}$,  data-driven soft-thresholding $ \hat{\bX}_{\mathrm{soft}}$, PCA full rank $\hat{\bX}^{r_{\max}}$ i.e.\ $r_{\max} = \min(n, m)$, oracle full rank approximation $\bX^{r_{\max}}_w$, and data-driven full rank estimation $\hat \bX^{r_{\max}}_{w}$ and $\hat \bX^{r_{\max}}_{*}$. The active set set of singular values is of the form $\hat{s} =\{1,\ldots,\hat{r}\}$ where $\hat{r} = \max \{k \; ; \; \tilde{\sigma}_k > c_+^{n,m}\}$ is an estimator of the rank using knowledge of the bulk edge $c_{+} \approx c_+^{n,m}$ (a), (d), (g) Median value of the NMSE of the various estimators over $M=1000$ Gaussian noise realizations in model \eqref{eq:modincreasingrank} as a function of the true rank $r^{\ast}$. (b), (c), (e), (f), (h), (i) The grey areas represent error bands of the NMSE of data-driven and oracle estimators.
}
\label{fig:gaussian_increasing_true_rank}
\end{figure}

%

\subsection{Influence of the dimension and the signal-to-noise ratio}

In Section \ref{sec:rankincreasing}, we used simulated data consisting of a signal matrix with equal positive singular values and an increasing rank. In such a setting , it is likely that the empirical weights $w_{k}(\bY)$, used in our approach, will have a high variance due to the term  $ \sum_{\ell =1 ; \ell \neq k}^{\min(n,m)} \frac{ \tilde{\sigma}_{k}^{2}}{\tilde{\sigma}_{k}^{2} - \tilde{\sigma}_{\ell}^{2} }$ in their expression \eqref{eq:optGauss}.  However, the numerical experiments carried out in Section  \ref{sec:rankincreasing} correspond to a very specific configuration of the signal matrix (with many equal singular values and a high rank)  which is not likely to be encountered with real data. 

To conclude these numerical experiments, we finally analyze the influence of the dimension of the data and the signal-to-noise ratio on the performances of our approach and  the estimator from \cite{GavishDonoho} in a more realistic setting (with Gaussian noise). These two estimators are the ones giving the best results, and it is thus of interest to compare them with further experiments.

We use real and square signal matrices $\bX \in \R^{n \times n}$  having a relatively fast decay of their singular values, see Figure \ref{fig:mandrill} and Figure \ref{fig:cameraman}. We choose to re-size them to let $n$ varying from 20 to 250, and we define the root of the signal-to-noise ratio (RSNR) as
$$
\mathrm{RSNR} = \frac{\sqrt{\frac{1}{n^2} \sum_{i,j = 1}^{n} (\bX_{ij} - \bar{\bX})^2}}{\tau} \quad \mbox{ with }  \quad \bar{\bX} = \frac{1}{n^2} \sum_{i,j = 1}^{n} \bX_{ij}.
$$
For each value of $n$ and RSNR (ranging from 5 to 10), we performed experiments involving $M=400$ realizations from model \eqref{eq:modincreasingrank} to compare the NMSE of the  estimators  by data-driven full rank estimation $\hat \bX^{r_{\max}}_{w}$ (our approach) and $\hat \bX^{r_{\max}}_{*}$ (shrinkage rule from  \cite{GavishDonoho}) with $r_{\max} = n$. In Figure \ref{fig:mandrill} and Figure \ref{fig:cameraman}, we report the results of these numerical experiments by displaying errors bars  of the NMSE of these estimators as functions of the dimension $n$. It can be seen that our approach dominates numerically the  estimator from \cite{GavishDonoho} (for all values of $n$ and RSNR) in settings that are more likely to be encountered in practice than the simulated data used in Section \ref{sec:rankincreasing}.

\begin{figure}[!t]
\centering
\subfigure[Signal matrix $\bX$]{\includegraphics[height=0.27\linewidth]{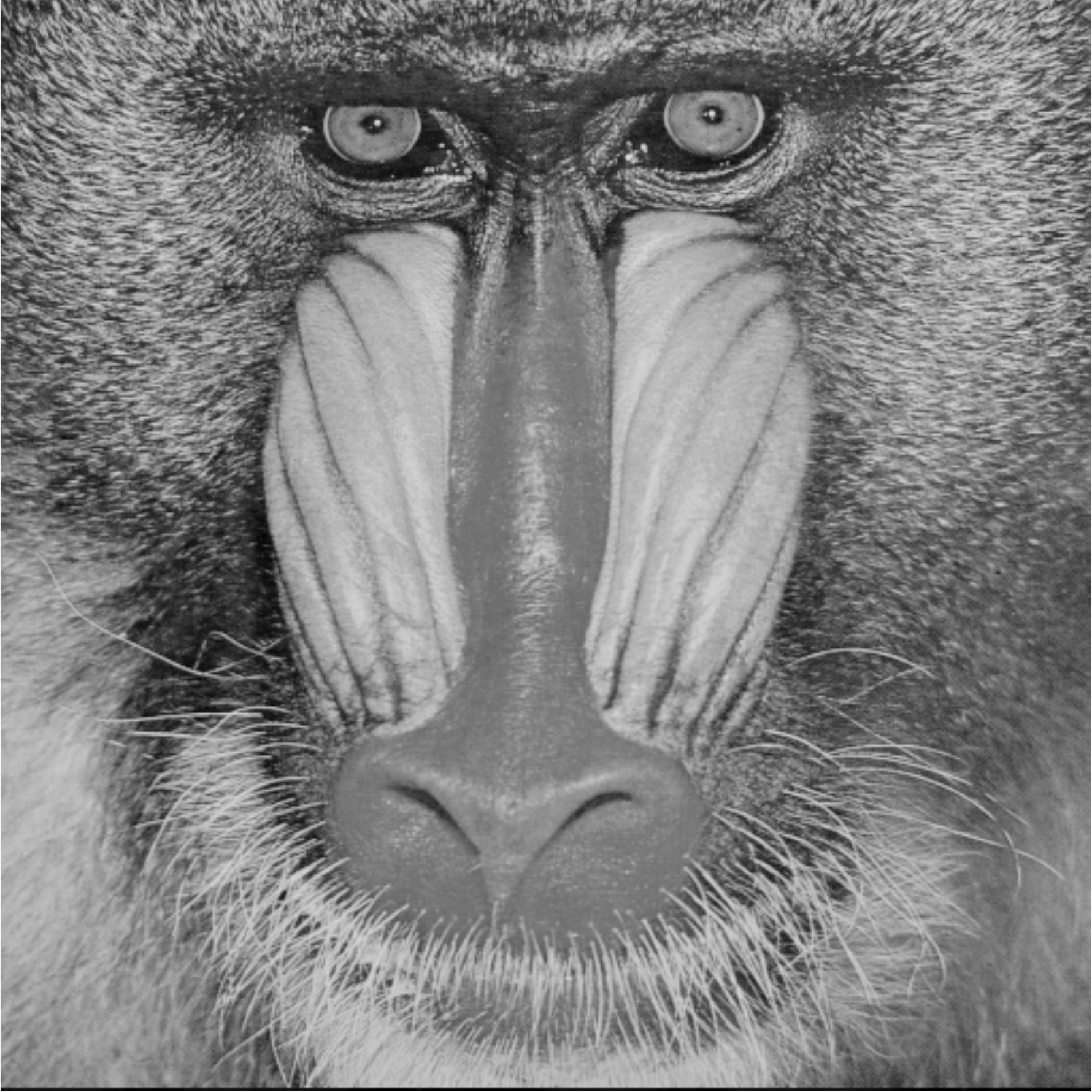}}%
\hspace{0.5cm}
\subfigure[Singular values of $\bX$]{\includegraphics[height=0.27\linewidth]{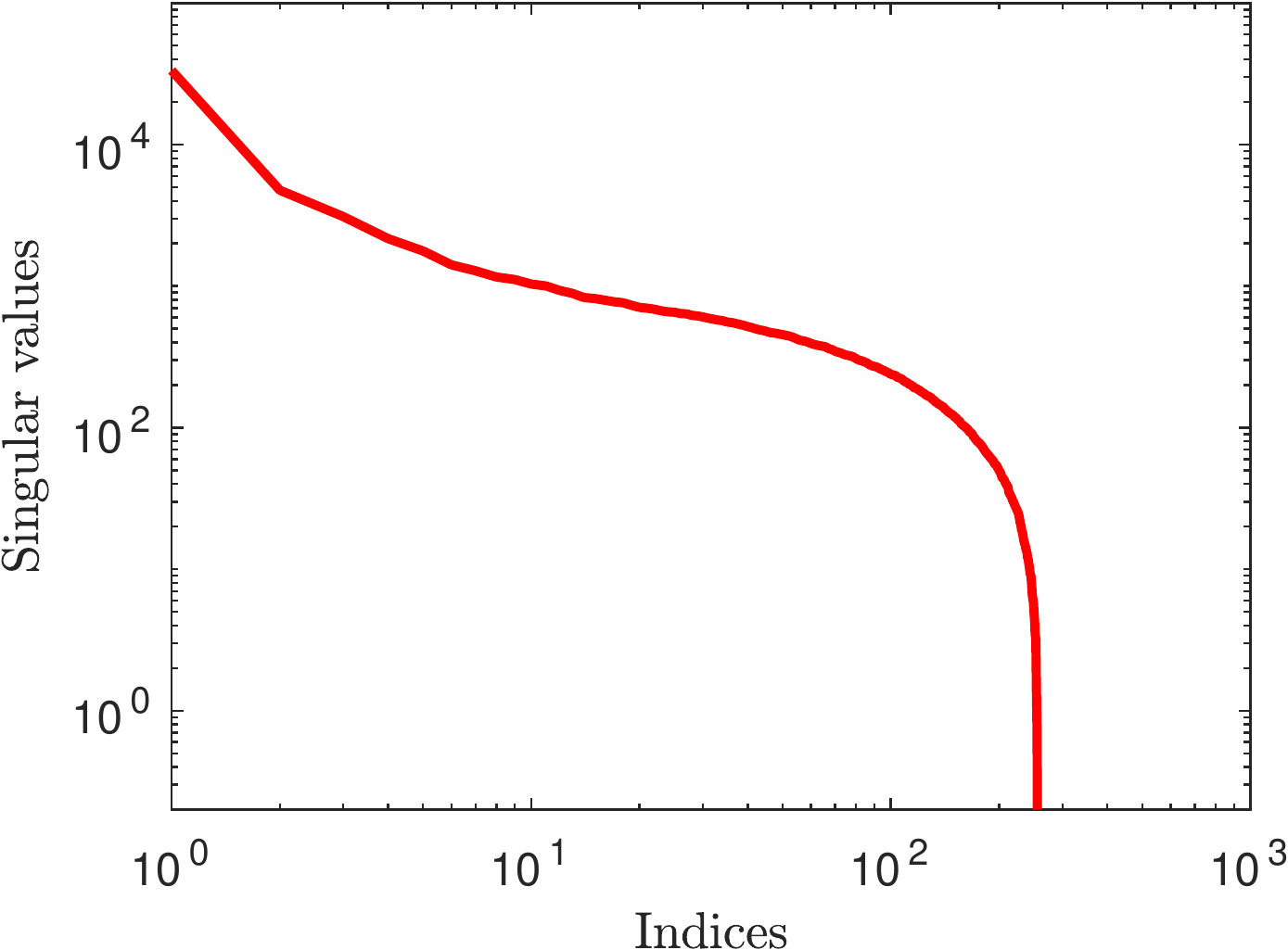}}%
\\
\subfigure[$\mathrm{RSNR} = 10$]{\includegraphics[height=0.27\linewidth]{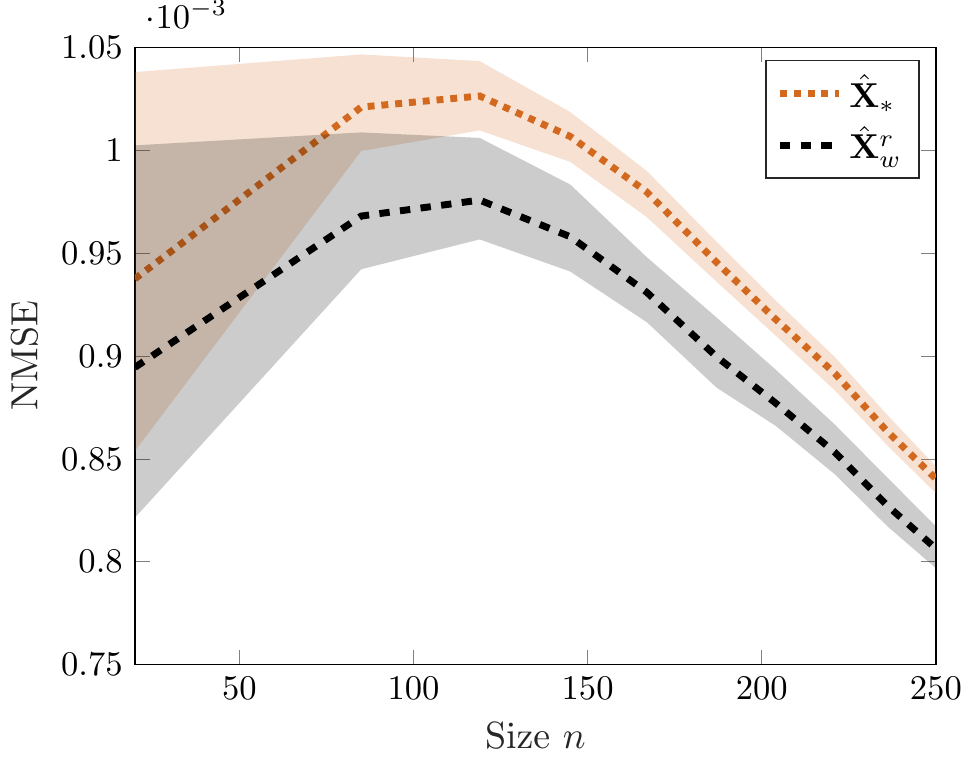}}%
\subfigure[$\mathrm{RSNR} = 7$]{\includegraphics[height=0.27\linewidth]{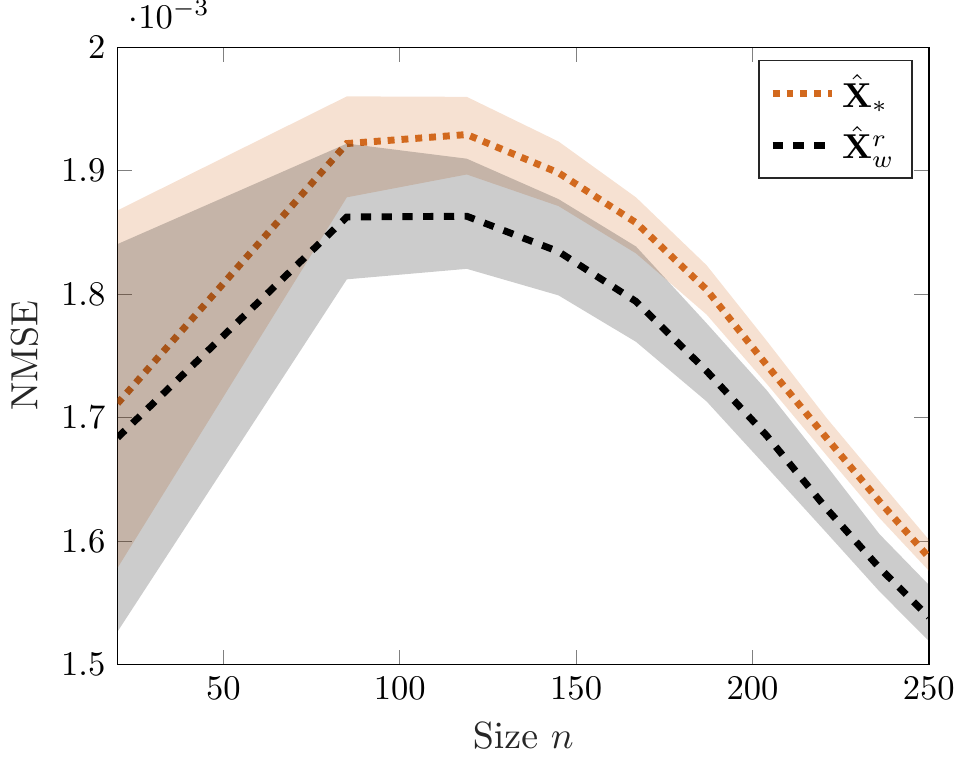}}%
\subfigure[$\mathrm{RSNR} = 5$]{\includegraphics[height=0.27\linewidth]{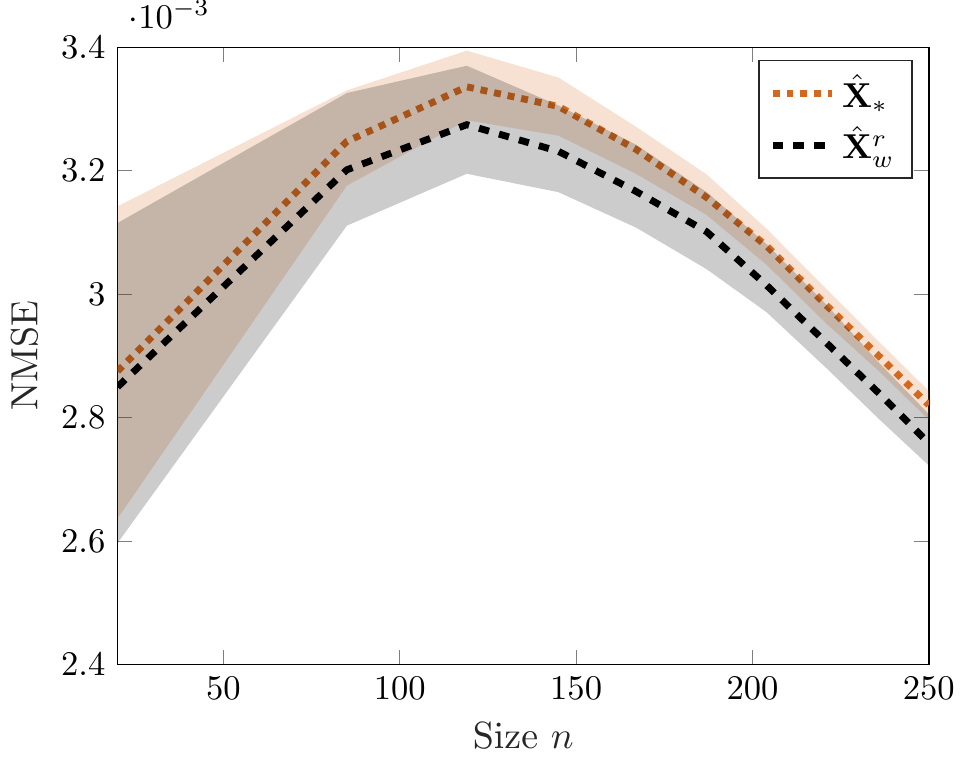}}%
\\[-0.5em]
\caption{Comparison of NMSE as a function of the dimension $n$ in model \eqref{eq:modincreasingrank} with a square matrix $\bX$ for different values of RSNR for the estimator  $\hat \bX^{r_{\max}}_{w}$ and $\hat \bX^{r_{\max}}_{*}$ with $r_{\max} = n$. The active set set of singular values is of the form $\hat{s} =\{1,\ldots,\hat{r}\}$ where $\hat{r} = \max \{k \; ; \; \tilde{\sigma}_k > c_+^{n,m}\}$ is an estimator of the rank using knowledge of the bulk edge $c_{+} \approx c_+^{n,m}$. (a) Signal matrix of size $250 \times 250$, (b) Decay of the singular values of $\bX$ in log-log scale, (c), (d), (e) Median value of the NMSE of $\hat \bX^{r_{\max}}_{w}$ and $\hat \bX^{r_{\max}}_{*}$ over $M=400$ Gaussian noise realizations in model \eqref{eq:modincreasingrank} as a function of the dimension $n$. The orange and grey areas represent error bands of the NMSE of these two estimators.}
\label{fig:mandrill}
\end{figure}

\begin{figure}[!t]
\centering
\subfigure[Signal matrix $\bX$]{\includegraphics[height=0.27\linewidth]{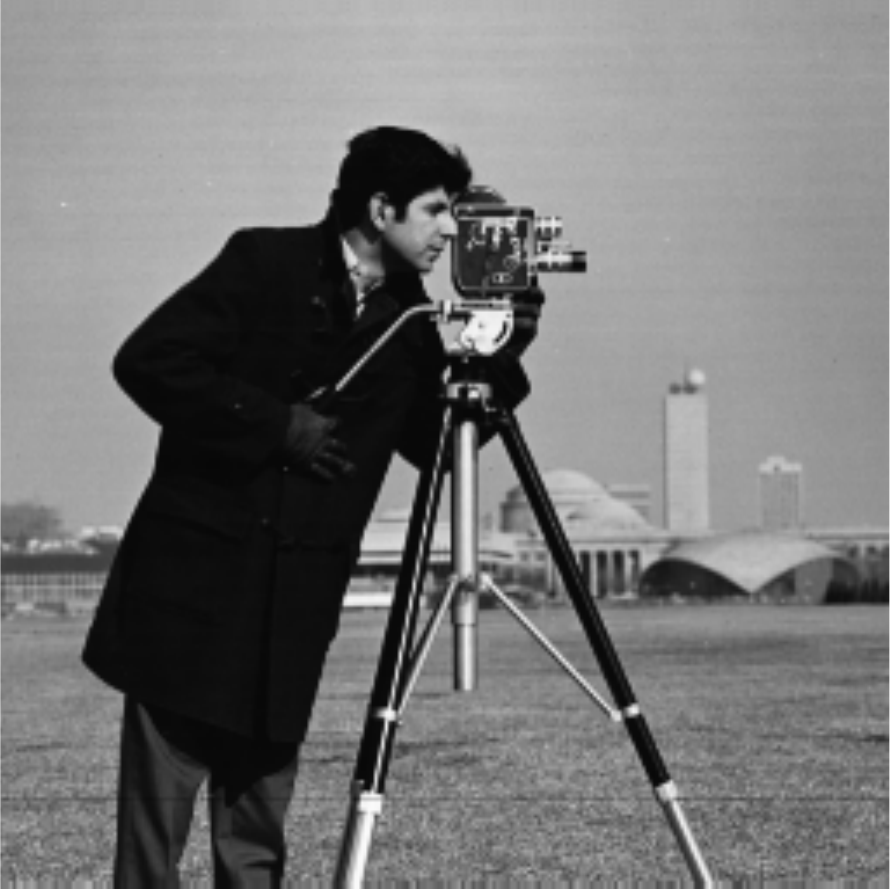}}%
\hspace{0.5cm}
\subfigure[Singular values of $\bX$]{\includegraphics[height=0.27\linewidth]{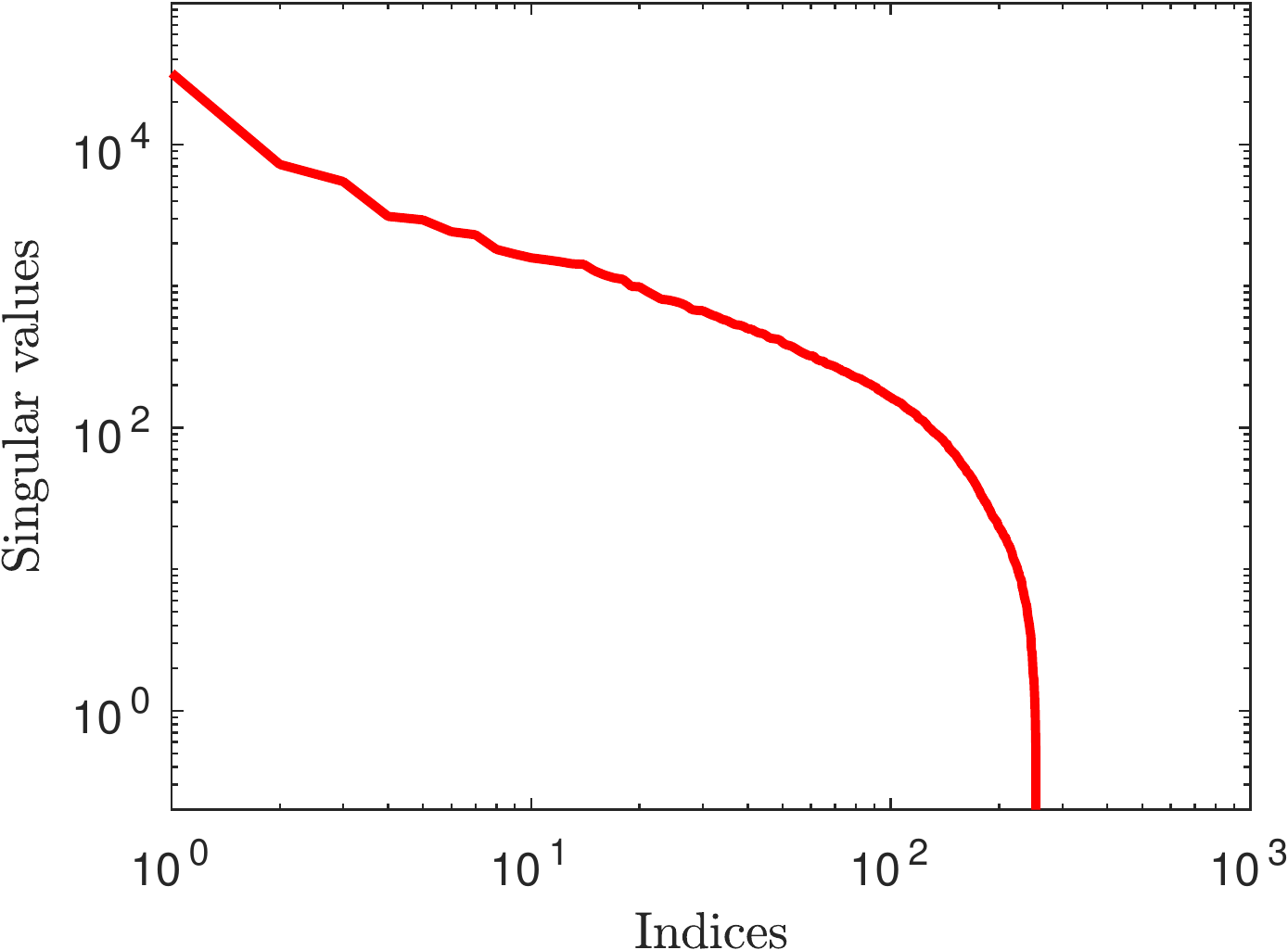}}%
\\
\subfigure[$\mathrm{RSNR} = 10$]{\includegraphics[height=0.27\linewidth]{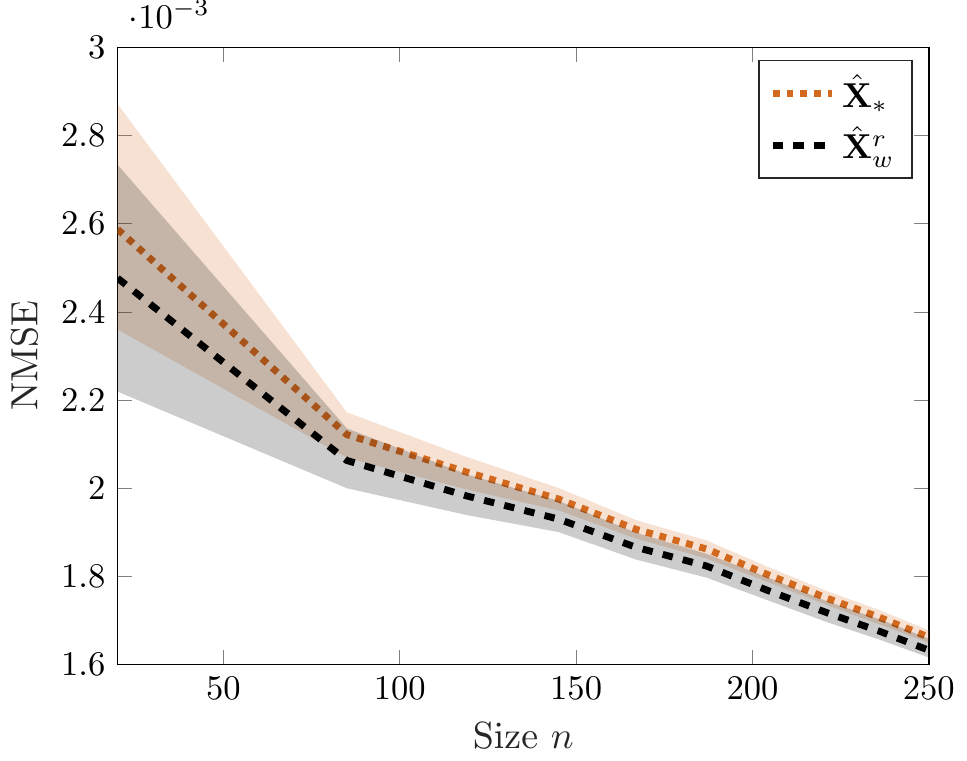}}%
\subfigure[$\mathrm{RSNR} = 7$]{\includegraphics[height=0.27\linewidth]{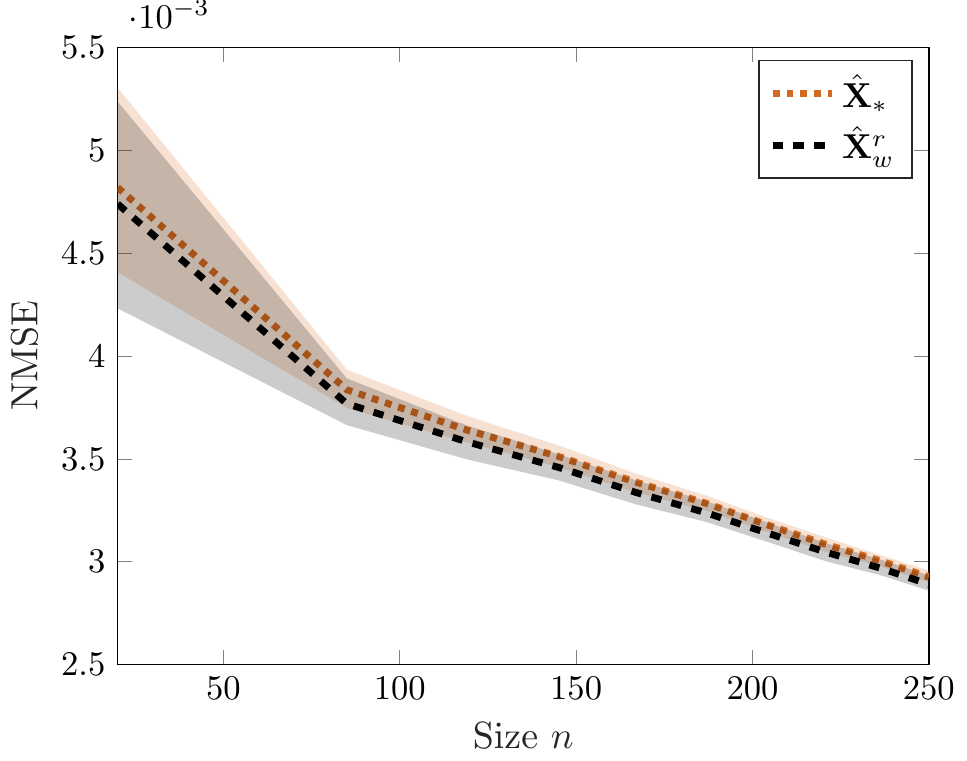}}%
\subfigure[$\mathrm{RSNR} = 5$]{\includegraphics[height=0.27\linewidth]{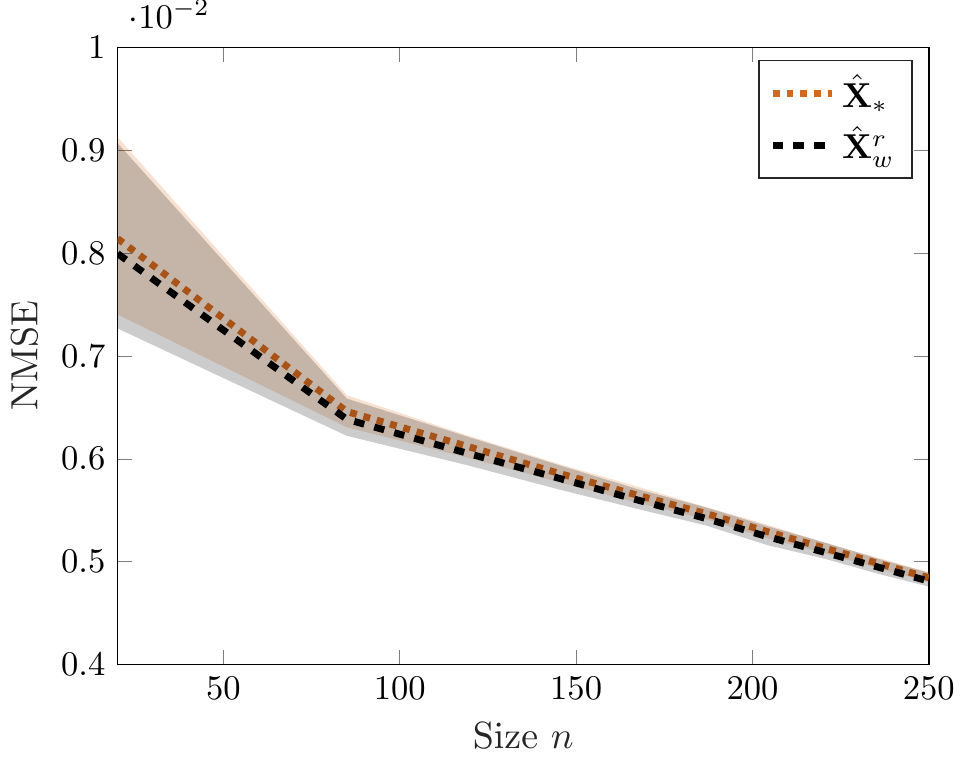}}%
\\[-0.5em]
\caption{Same as Fig.~\ref{fig:mandrill} with another signal matrix $\bX$.}
\label{fig:cameraman}
\end{figure}

\appendix
\section{Proof of the main results} \label{sec:proofs}

\subsection{Proof of Proposition \ref{prop:sti}}

Let us first introduce some notation and definitions to be used in the proof. For all $1 \leq \ell \leq n$, let $\tilde{\lambda}_{\ell}$ be the eigenvalues of ${\bY \bY^{t}}$ namely $\tilde{\lambda}_{\ell}=\tilde{\sigma}_{\ell}^{2}$. For a fixed $1 \leq k \leq r^{\ast}$ such that $\sigma_{k} > c^{1/4}$, let us introduce the complex-valued function $g_{k}$ defined by
\begin{align*}
g_{k}(z) =   \frac{1}{n}  \sum_{\ell =1 ; \ell \neq k}^{n} \frac{1}{z - \tilde{\lambda}_{\ell} } \mbox{\quad for  $z \in \mathbb C \setminus \rm{supp}(\mu_{k})$},
\end{align*}
where $\rm{supp}(\mu_{k}) = \left\{ \tilde{\lambda}_{\ell}; 1 \leq \ell \leq n, \; \ell \neq k \right\}$ is the support of the random measure
$
\mu_{k} =  \frac{1}{n}  \sum_{\ell =1 ; \ell \neq k}^{n} \delta_{\tilde{\lambda}_{\ell}}
$
on $\R_{+}$, where $\delta_{\lambda}$ denotes the Dirac measure at $\lambda$. It is clear that
\begin{align*}
g_{k}(z) = \int  \frac{1}{z - \lambda }  d \mu_{k}(\lambda).
\end{align*}
The main difficulty in the proof is to  show that, almost surely,
\begin{align*}
\lim_{n \to + \infty} g_{k}(  \tilde{\sigma}_{k}^{2} ) = \frac{1}{ \rho^2\left( \sigma_{k} \right)}  \left( 1 + \frac{1}{\sigma_{k}^{2}} \right),
\end{align*}
which is the purpose of what follows.

For a  matrix $A \in \R^{n \times m}$ (with $n \leq m$), we denote its singular values by $\sigma_{1}(A) \geq \sigma_{2}(A) \geq \ldots \geq \sigma_{n}(A) \geq 0$.  Hence, one has that $\tilde{\sigma}_{\ell} = \sigma_{\ell}(\bY)$ for all $1 \leq \ell \leq n$. Now, we recall that $\bY = \bX + \bW$ where $\bX$ is a fixed matrix of rank $r^{\ast}$ and $\bW$ is a random matrix with iid entries sampled from a Gaussian distribution with zero mean and variance $\frac{1}{m}$. The first step in the proof is to show that the random measure $\mu_{k}$ behaves asymptotically as the almost sure limit of the empirical spectral measure $\mu_{\bW \bW^{t}}$ of the Wishart matrix $\bW \bW^{t}$. By definition, the eigenvalues of $\bW \bW^{t}$ are $\lambda_{\ell}(\bW)=\sigma_{\ell}^{2}(\bW)$ for all $1 \leq \ell \leq n$ and $\mu_{\bW \bW^{t}}$ is thus defined as
\begin{align*}
\mu_{\bW \bW^{t}} =  \frac{1}{n}  \sum_{\ell =1}^{n} \delta_{\lambda_{\ell}(\bW)}.
\end{align*}
It is well know (see e.g.\ Theorem 3.6 in \cite{MR2567175}) that, once $m = m_{n} \geq n$ and $\lim_{n \to + \infty} \frac{n}{m} = c$ with $0 <c \leq 1$, then, almost surely, the empirical spectral measure $\mu_{\bW \bW^{t}}$ converges weakly to the so-called Marchenko-Pastur distribution $\mu_{MP}$ which is deterministic and has the following density
$\frac{d\mu_{MP}(\lambda)}{d\lambda} =\frac{1}{2\pi c \lambda}
\sqrt{(c^2_+ -\lambda) (\lambda-c^2_-)} ~ 1\! \!{\sf I}_{[c^2_-, c^2_+]}(\lambda)$.
We recall that such a convergence can also be characterized through the so-called Cauchy or Stieltjes transform which is defined for any probability measure $\mu$ on $\R$ as
\begin{align*}\forall z \in \C \text{\, outside the support of $\mu$}, \quad g_\mu(z)=\int \frac{1}{z - \lambda} d\mu(\lambda).\end{align*}
By eq.~(3.3.2) in \cite{MR2567175}, one obtains that, almost surely,
\begin{equation}
\lim_{n \to \infty} \int \frac{1}{z - \lambda} d \mu_{\bW \bW^{t}}(\lambda) = g_{MP}(z) \mbox{ for any }  z \in  \mathbb C \setminus \R, \label{eq:convMP}
\end{equation}
where $g_{MP}$ is the Cauchy transform of $\mu_{MP}$ and
\begin{align*}
 g_{MP}(z) =  \int \frac{1}{z - \lambda} d\mu_{MP}(\lambda) = \frac{z - (1 - c) - \sqrt{(z-(c+1))^2-4c} }{2 c z} \mbox{\quad for all $z \in \mathbb C \setminus [c_{-}^2,c_{+}^2]$}.
\end{align*}
Moreover,  by Proposition 6 in \cite{MR1971583}, the convergence \eqref{eq:convMP} is uniform over any compact subset of $\mathbb C \setminus \mathbb R$.


Then, it follows from the so-called Weyl's interlacing inequalities (see e.g.\ Theorem 3.1.2 in \cite{opac-b1086036}) that for all $1 \leq \ell \leq n$
\begin{equation}
 \sigma_{\ell + r^{\ast}}(\bW)  \leq \sigma_{\ell}(\bY) \leq  \sigma_{\ell-r^{\ast}}(\bW), \label{eq:Weyl}
\end{equation}
with the convention that  $\sigma_{k}(\bW) = - \infty$ if $k > n$ and $\sigma_{k}(\bW) = + \infty$ if $k \leq 0$.
Thanks to the results that have been recalled above on the asymptotic properties of  $\mu_{\bW \bW^{t}}$, one may use inequalities \eqref{eq:Weyl}  to prove that, almost surely, the random measure $\mu_{k}$ converges weakly to the Marchenko-Pastur distribution $\mu_{MP}$.
Under the assumptions of Proposition  \ref{prop:sti} and using Proposition \ref{prop:sv}, it can be shown that there exists $\eta_{k} > 0$
such that, almost surely and for all sufficiently large $n$
\begin{align*}
\tilde{\lambda}_{\ell} \notin K_{k} := [\rho^{2}(\sigma_{k}) - \eta_{k}, \rho^{2}(\sigma_{k}) + \eta_{k}]
\end{align*}
for any $1 \leq \ell \leq n$ with $\ell \neq k$. Now, recall that the support $\rm{supp}(\mu_{k})$ of the random measure $\mu_{k}$ is $\left\{ \tilde{\lambda}_{\ell}; 1 \leq \ell \leq n, \; \ell \neq k \right\}$, and that $\rm{supp}(\mu_{MP}) = [c_{-}^2,c_{+}^2]$. Hence, for all sufficiently large $n$, one has that
\begin{align*}
\rm{supp}(\mu_{k}) \cap K_{k} = \emptyset \quad \mbox{ and }  \quad \rm{supp}(\mu_{MP}) \cap K_{k} = \emptyset.
\end{align*}
Therefore, thanks to the weak convergence of $\mu_{k}$ to $\mu_{MP}$ and  using Ascoli's Theorem, one may prove that
\begin{equation}
\lim_{n \to \infty} \sup_{z \in K_{k}}  |g_{k}(z) -  g_{MP}(z)|  = 0  \mbox{ almost surely}. \label{eq:convuni}
\end{equation}
Thanks to our assumptions, one has that, almost surely, $\lim_{n \to + \infty} \tilde{\sigma}^{2}_{k} = \rho^{2}\left( \sigma_{k} \right)$ by Proposition \ref{prop:sv}. Hence, almost surely and for all sufficiently large $n$, one has that $\tilde{\sigma}_{k}^{2} \in K_{k}$ and so
\begin{align*}
|g_{k} (\tilde{\sigma}_{k}^{2}) - g_{MP} (\rho^{2}\left( \sigma_{k} \right))| \leq \sup_{z \in K_{k}} |g_{k} (z) - g_{MP}(z)| + |g_{MP} (\tilde{\sigma}_{k}^{2}) - g_{MP} (\rho^{2}\left( \sigma_{k} \right))|.
\end{align*}
Therefore, using the uniform convergence \eqref{eq:convuni} of $g_{k}$ to $g_{MP}$ and the continuity of $g_{MP}$ at $z = \rho^{2}\left( \sigma_{k} \right)$, one obtains that, almost surely,
\begin{align*}
\lim_{n \to + \infty} g_{k} (\tilde{\sigma}_{k}^{2}) = g_{MP} (\rho^{2}\left( \sigma_{k} \right)) = \frac{1}{ \rho^2\left( \sigma_{k} \right)} \times \frac{\rho^{2}\left( \sigma_{k} \right) - 1 + c - \sqrt{(\rho^{2}\left( \sigma_{k} \right)-(c+1))^2-4c} }{2 c}.
\end{align*}
Since  $g_{k} (\tilde{\sigma}_{k}^{2}) =  \frac{1}{n}  \sum_{\ell =1 ; \ell \neq k}^{n} \frac{1}{\tilde{\sigma}_{k}^{2} - \tilde{\sigma}_{\ell}^{2} }$, using the above equation and relation \eqref{eq:link}, it follows immediately that
$
g_{MP}(\rho^{2}\left( \sigma_{k} \right)) =  \frac{1}{ \rho^2\left( \sigma_{k} \right)}  \left( 1 + \frac{1}{\sigma_{k}^{2}} \right)
$
so that, almost surely,
\begin{align*}
\lim_{n \to + \infty} \frac{1}{n}  \sum_{\ell =1 ; \ell \neq k}^{n} \frac{\tilde{\sigma}_{k}}{\tilde{\sigma}_{k}^{2} - \tilde{\sigma}_{\ell}^{2} }
=  \lim_{n \to + \infty} \tilde{\sigma}_{k} g_{k} (\tilde{\sigma}_{k}^{2})
= \rho\left( \sigma_{k} \right) g_{MP}(\rho^{2}\left( \sigma_{k} \right))  = \frac{1}{ \rho\left( \sigma_{k} \right)}  \left( 1 + \frac{1}{\sigma_{k}^{2}} \right),
\end{align*}
which completes the proof.

\subsection{A technical result to prove SURE-like formulas} \label{app:GSURE}

We recall the key lemma needed to prove the SURE-like formulas in an exponential family in the continuous case. Similar results have already been formulated in different papers in the literature, see e.g.\  the review proposed in \cite{Deledalle}. 

\begin{lem} \label{lem:GSURE}
Let $\bY \in \R^{n \times m}$ be a random matrix whose entries $\bY_{ij}$ are independently sampled from the continuous exponential family \eqref{eq:expfamcan} in canonical form (that is the distribution of   $\bY_{ij}$ is absolutely continuous with respect to the Lebesgue measure $dy$ on $\R$). Suppose that the function $h$ is continuously differentiable on $\YY = \R$. Let $1 \leq i \leq n$ and $1 \leq j \leq m$, and denote by $F_{ij} : \R^{n \times m} \to \R$  a continuously differentiable function such that
\begin{equation}
\E \left[ \left| F_{ij}(\bY) \right| \right] < + \infty. \label{eq:condF}
\end{equation}
Then, the following relation holds
\begin{align*}
\E \left[ \btheta_{ij} F_{ij}(\bY)  \right] = - \E \left[  \frac{h'(\bY_{ij})}{h(\bY_{ij})}  F_{ij}(\bY)  + \frac{\partial F_{ij}(\bY) }{ \partial \bY_{ij}} \right].
\end{align*}
\end{lem}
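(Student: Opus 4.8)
The plan is to prove the identity by integration by parts in the single variable $\bY_{ij}$, exploiting the specific structure of the canonical exponential family density $p(y;\btheta_{ij}) = h(y)\exp(\btheta_{ij}y - A(\btheta_{ij}))$. The key observation is that the score with respect to $y$ satisfies
\begin{align*}
\frac{\partial}{\partial y}\log p(y;\btheta_{ij}) = \frac{h'(y)}{h(y)} + \btheta_{ij},
\end{align*}
so that $\partial_y p(y;\btheta_{ij}) = \bigl(\tfrac{h'(y)}{h(y)} + \btheta_{ij}\bigr)p(y;\btheta_{ij})$. This rearranges to $\btheta_{ij}\,p(y;\btheta_{ij}) = \partial_y p(y;\btheta_{ij}) - \tfrac{h'(y)}{h(y)}p(y;\btheta_{ij})$, which is exactly the pointwise identity underlying the claim.

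First I would fix all indices $(k,\ell)\neq(i,j)$ and condition on $\{\bY_{k\ell}\}_{(k,\ell)\neq(i,j)}$; by independence of the entries it suffices to work with the one-dimensional marginal of $\bY_{ij}$. Writing $y = \bY_{ij}$ and treating the other entries as fixed parameters inside $F_{ij}$, I would compute
\begin{align*}
\E\!\left[\btheta_{ij}F_{ij}(\bY)\,\middle|\,\{\bY_{k\ell}\}_{(k,\ell)\neq(i,j)}\right]
= \int_{\R} \btheta_{ij}\,F_{ij}(\bY)\,p(y;\btheta_{ij})\,\mathrm{d}y,
\end{align*}
and substitute the pointwise identity above to get
\begin{align*}
\int_{\R} F_{ij}(\bY)\Bigl(\tfrac{\partial}{\partial y}p(y;\btheta_{ij})\Bigr)\mathrm{d}y
- \int_{\R} \tfrac{h'(y)}{h(y)}F_{ij}(\bY)\,p(y;\btheta_{ij})\,\mathrm{d}y.
\end{align*}
Integration by parts on the first integral turns $\int F_{ij}\,\partial_y p\,\mathrm{d}y$ into $[F_{ij}\,p]_{-\infty}^{+\infty} - \int (\partial_y F_{ij})\,p\,\mathrm{d}y$. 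Assuming the boundary term vanishes, this yields $-\E[\tfrac{\partial F_{ij}}{\partial \bY_{ij}} + \tfrac{h'(\bY_{ij})}{h(\bY_{ij})}F_{ij}(\bY)\mid\cdots]$, and then taking total expectation (legitimate by the integrability hypothesis \eqref{eq:condF}) gives the stated relation.

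The main obstacle is the vanishing of the boundary term $\lim_{|y|\to\infty} F_{ij}(\bY)\,p(y;\btheta_{ij})$ and, relatedly, the justification of differentiation under the integral sign and of Fubini's theorem when passing from the conditional to the unconditional expectation. The standard way to handle this — and what I would invoke — is that for a canonical exponential family the density $p(\cdot;\btheta_{ij})$ decays faster than any polynomial at the edges of its support (indeed it decays exponentially in the interior of the natural parameter space), so that together with mild growth control on $F_{ij}$ inherited from \eqref{eq:condF} the boundary contribution is zero; this type of argument is already used in \cite{Eldar09,Deledalle} and I would cite those works rather than re-derive the measure-theoretic details. I would also note that if $\YY=\R$ but the effective support of $p(\cdot;\btheta_{ij})$ is a half-line (as in the Gamma case), the boundary analysis at the finite endpoint is handled by the factor $h(y)$ vanishing there, which is consistent with the differentiability hypothesis imposed on $h$.
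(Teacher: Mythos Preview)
Your proposal is correct and follows essentially the same route as the paper's proof: both reduce to integration by parts in the single variable $y_{ij}$ using the identity $\btheta_{ij}\exp(\btheta_{ij}y - A(\btheta_{ij})) = \partial_y\exp(\btheta_{ij}y - A(\btheta_{ij}))$, with the only cosmetic difference being whether $h(y)$ is grouped with $F_{ij}$ before integrating by parts (the paper) or with the exponential factor via the score identity (your version). Your discussion of the boundary term is in fact more explicit than the paper's, which simply invokes condition \eqref{eq:condF} without further comment.
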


\begin{proof}
Using the expression \eqref{eq:expfamcan} of the pdf of the random varibles $\bY_{ij}$, one has that
\begin{align*}
  \E \left[ \btheta_{ij}  F_{ij}(\bY) \right] = \int_{\R^{n \times m}}  F_{ij}(Y) h(y_{ij})  \btheta_{ij}  \exp \left( \btheta_{ij} y_{ij} - A(\btheta_{ij}) \right)  \;\mathrm{d}y_{ij}
  \prod_{\substack{1 \leq k \leq n\\1 \leq \ell \leq m\\(k,\ell) \neq (i,j)}}^{n} p(y_{k \ell} ; \btheta_{k \ell}) \;\mathrm{d}y_{k \ell}.
\end{align*}
where $Y = (y_{k \ell} )_{1 \leq k \leq n, 1 \leq \ell \leq m}$.
Thanks to condition \eqref{eq:condF}, it follows that
\begin{equation} \label{eq:condfinite}
 \int_{\R^{n \times m}}  F_{ij}(Y) h(y_{ij})   \exp \left( \btheta_{ij} y_{ij} - A(\btheta_{ij}) \right) \;\mathrm{d}y_{ij} \prod_{\substack{1 \leq k \leq n\\1 \leq \ell \leq m\\(k,\ell) \neq (i,j)}}^{n} p(y_{k \ell} ; \btheta_{k \ell}) \;\mathrm{d}y_{k \ell} < + \infty.
\end{equation}
Therefore, given that
$
\btheta_{ij} \exp \left( \btheta_{ij} y_{ij} - A(\btheta_{ij}) \right) = \frac{\partial \exp \left( \btheta_{ij} y_{ij} - A(\btheta_{ij}) \right)}{\partial y_{ij}},
$
an integration by part and eq.~\eqref{eq:condfinite} imply that
\begin{align*}
\E \left[ \btheta_{ij} F_{ij}(\bY) \right] = - \int_{\R^{n \times m}}  \frac{\partial  F_{ij}(Y) h(y_{ij}) }{\partial y_{ij}}    \exp \left( \btheta_{ij} y_{ij} - A(\btheta_{ij}) \right) \;\mathrm{d}y_{ij} \prod_{\substack{1 \leq k \leq n\\1 \leq \ell \leq m\\(k,\ell) \neq (i,j)}}^{n} p(y_{k \ell} ; \btheta_{k \ell}) \;\mathrm{d}y_{k \ell}.
\end{align*}
Now, since $\frac{\partial  F_{ij}(Y) h(y_{ij}) }{\partial y_{ij}} =   h'(y_{ij})  F_{ij}(Y) + \frac{\partial  F_{ij}(Y)}{\partial y_{ij}}   h(y_{ij})$, we finally obtain that
\begin{align*}
\E \left[ \btheta_{ij} F_{ij}(\bY) \right] = -\E \left[  \frac{h'(\bY_{ij})}{h(\bY_{ij})}  F_{ij}(\bY) + \frac{\partial F_{ij}(\bY) }{ \partial \bY_{ij}} \right],
\end{align*}
which completes the proof.
\end{proof}

\subsection{Proof of Proposition \ref{prop:SURE-MSE}}

We remark that
\begin{equation} \label{eq:MSE1}
\MSE(\hat{\btheta}^{f}, \btheta)  =  \sum_{i=1}^{n} \sum_{j=1}^{m}\left(   \E \left[ |\hat{\btheta}^{f}_{ij}(\bY)|^2 -2  \btheta_{ij}  \hat{\btheta}^{f}_{ij}(\bY) \right] + \btheta_{ij}^2 \right).
\end{equation}
Using Lemma \ref{lem:GSURE} with $F_{ij}(\bY) =  \hat{\btheta}^{f}_{ij}(\bY)$ and condition \eqref{eq:condGSURE}, it follows that
\begin{equation} \label{eq:MSE2}
\E \left[ \btheta_{ij}  \hat{\btheta}^{f}_{ij}(\bY) \right] = \E \left[  \frac{h'(\bY_{ij})}{h(\bY_{ij})}  \hat{\btheta}^{f}_{ij}(\bY) \right] + \E \left[ \frac{\partial \hat{\btheta}^{f}_{ij}(\bY) }{ \partial \bY_{ij}} \right].
\end{equation}
Then, by definition  \eqref{eq:expfamcan} of  the exponential family, we remark that
\begin{align*}
\E \left[ \frac{h''(\bY_{ij})}{h(\bY_{ij})} \right] =  \int_{\R}  h''(y_{ij})   \exp \left( \btheta_{ij} y_{ij} - A(\btheta_{ij}) \right) \;\mathrm{d}y_{ij}.
\end{align*}
Hence,  using  an integration by parts twice, we arrive at
\begin{equation} \label{eq:MSE3}
\E \left[ \frac{h''(\bY_{ij})}{h(\bY_{ij})} \right] =  \btheta_{ij}^2  \int_{\R}  h(y_{ij})   \exp \left( \btheta_{ij} y_{ij} - A(\btheta_{ij}) \right) \;\mathrm{d}y_{ij} = \btheta_{ij}^2.
\end{equation}
To complete the proof, it suffices to insert equalities \eqref{eq:MSE2} and  \eqref{eq:MSE3} into \eqref{eq:MSE1}.

\subsection{Proof of Proposition \ref{prop:SURE-MKLS}}

Thanks to eq.~\eqref{eq:MKLA}, one has that
\begin{equation} \label{eq:KL1}
\MKLS(\hat{\btheta}^{f}, \btheta) = \sum_{i=1}^{n} \sum_{j=1}^{m}    \E \left[   \hat{\btheta}^{f}_{ij}(\bY)  A'( \hat{\btheta}^{f}_{ij}(\bY) )   - \btheta_{ij} A'( \hat{\btheta}^{f}_{ij}(\bY))   -  A( \hat{\btheta}^{f}_{ij}(\bY)) \right] +  A(\btheta_{ij}).
\end{equation}
Using Lemma \ref{lem:GSURE} with $F_{ij}(\bY) = A'( \hat{\btheta}^{f}_{ij}(\bY))$ and condition \eqref{eq:condSUKLS}, it follows that
\begin{equation} \label{eq:KL2}
 \E \left[  \btheta_{ij} A'( \hat{\btheta}^{f}_{ij}(\bY))  \right] = -  \E \left[  \frac{h'(\bY_{ij})}{h(\bY_{ij})}  A'(  \hat{\btheta}^{f}_{ij}(\bY) )  \right] -  \E \left[ \frac{\partial \hat{\btheta}^{f}_{ij}(\bY) }{ \partial \bY_{ij}} A''( \hat{\btheta}^{f}_{ij}(\bY) )    \right].
\end{equation}
Thus, inserting equality \eqref{eq:KL2} into \eqref{eq:KL1} implies that
\begin{gather*}
\SUKLS(\hat{\btheta}^{f})  = \sum_{i=1}^{n} \sum_{j=1}^{m}  \left( \left( \hat{\btheta}^{f}_{ij}(\bY) +  \frac{h'(\bY_{ij})}{h(\bY_{ij})} \right) A'(  \hat{\btheta}^{f}_{ij}(\bY) ) - A(\hat{\btheta}^{f}_{ij}(\bY) )   \right) + \sum_{i=1}^{n} \sum_{j=1}^{m} A''( \hat{\btheta}^{f}_{ij}(\bY)) \frac{\partial \hat{\btheta}^{f}_{ij}(\bY) }{ \partial \bY_{ij}}
\end{gather*}
is an unbiased estimator of $\MKLS(\hat{\btheta}^{f}, \btheta) - \sum_{i=1}^{n} \sum_{j=1}^{m} A(\btheta_{ij})$. Now recall that  $ f_{ij}(\bY)=  \eta^{-1} \left(  \hat{\btheta}^{f}_{ij}( \bY )  \right)$ and that $A'(  \hat{\btheta}^{f}_{ij}(\bY) ) = \eta^{-1} \left(  \hat{\btheta}^{f}_{ij}( \bY )  \right)$ by Assumption \ref{hyp:link}. Therefore, $ \frac{\partial f_{ij}(\bY)  }{ \partial \bY_{ij}} = A''( \hat{\btheta}^{f}_{ij}(\bY)) \frac{\partial \hat{\btheta}^{f}_{ij}(\bY) }{ \partial \bY_{ij}}$, and thus
\begin{align*}
\SUKLS(\hat{\btheta}^{f})  = \sum_{i=1}^{n} \sum_{j=1}^{m}  \left( \left( \hat{\btheta}^{f}_{ij}(\bY) +  \frac{h'(\bY_{ij})}{h(\bY_{ij})} \right) A'(  \hat{\btheta}^{f}_{ij}(\bY) ) - A(\hat{\btheta}^{f}_{ij}(\bY) )   \right) + \sum_{i=1}^{n} \sum_{j=1}^{m} \frac{\partial f_{ij}(\bY)  }{ \partial \bY_{ij}},
\end{align*}
which completes the proof.



\subsection{Proof of Proposition \ref{prop:SURE-MKLA}}

Thanks to the expression \eqref{eq:MKLAPoisson} of the MKLA risk for data sampled from a Poisson distribution, it follows that
\begin{align*}
\MKLA(\hat{\btheta}^{f}, \btheta) +  \sum_{i=1}^{n} \sum_{j=1}^{m}   \bX_{ij} -  \bX_{ij} \log \left( \bX_{ij}  \right) =  \sum_{i=1}^{n} \sum_{j=1}^{m}  \E \left[ \hat{\bX}^{f}_{ij}   -  \bX_{ij} \log \left(  \hat{\bX}^{f}_{ij}  \right)  \right]
\end{align*}
In the  case of  Poisson data, one has that  $\exp \left( \btheta_{ij} \right) = \bX_{ij}$ and  $\frac{h(\bY_{ij} - 1)}{h(\bY_{ij})}  = \bY_{ij}$. Therefore, by applying Hudson's Lemma \ref{lem:hudson} with $F_{ij}(\bY) = \log\left( \hat{\bX}^{f}_{ij}\right)$, it follows that
\begin{align*}
\E \left[  \sum_{i=1}^{n} \sum_{j=1}^{m}  \bX_{ij} \log\left( \hat{\bX}^{f}_{ij}\right)\right] = \E \left[   \sum_{i=1}^{n} \sum_{j=1}^{m}  \bY_{ij} \log \left( f_{ij}(\bY - \be_{i} \be_{j}^{t})  \right) \right],
\end{align*}
which completes the proof.

\section{Implementation details} \label{sec:algo}

We discuss below an algorithmic approach to find data-driven spectral estimators.

First, we discuss on how to compute data-driven spectral estimators
from the expression of risk estimators.
For  $\SUKLS$ in continuous exponential families, and for $\SURE$ in the Gaussian case only,   eq.~\eqref{eq:SURE} and \eqref{eq:SUKLS}
provide respectively a closed-form solution that can be evaluated in linear time $O(nm)$.
On the contrary, the computations of $\GSURE$ (beyond the Gaussian case), $\PURE$
and $\PUKLA$, given respectively in eq.~\eqref{eq:GSUREexp}, \eqref{eq:PURE} and \eqref{eq:PUKLA}, cannot be
evaluated in reasonable time. They rely respectively on the
computation of the divergence $\div \hat{\btheta}^{f}( \bY )$,
$\sum \sum  \bY_{ij} f_{ij}(\bY - \be_{i} \be_{j}^{t})$ and
$\sum \sum \bY_{ij} \log\left( f_{ij}(\bY - \be_{i} \be_{j}^{t}) \right)$.
Without further assumptions, such quantities requires $O(n^2m^2)$ operations in general.
A standard approach for the computation of the divergence,
suggested in \cite{girard1989fast,ramani2008montecarlosure},
is to unbiasedly estimate it with Monte-Carlo simulations
by sampling the following relation
\begin{align*}
  \div \hat{\btheta}^{f}( \bY )
  &=
  \mathbb{E}_{\bdelta}\left[
  \text{tr} \left(
    \bdelta^t \frac{\partial \hat{\btheta}^{f}( \bY )}{\partial \bY} \bdelta
    \right)
    \right]
\end{align*}
at random directions $\bdelta \in \mathbb{R}^{n \times m}$ satisfying
$\mathbb{E}[\bdelta] = 0$, $\mathbb{E}[\bdelta_i \bdelta_i] = 1$
and $\mathbb{E}[\bdelta_i \bdelta_j] = 0$.
Following \cite{Deledalle}, a similar first order approximation can be
used for the other two quantities as
\begin{align*}
  \sum \sum \bY_{ij} f_{ij}(\bY - \be_{i} \be_{j}^{t})
  &\approx
  \sum \sum \bY_{ij} \left[
  f_{ij}(\bY) -
  \bdelta_{i,j} \left(\frac{\partial f( \bY )}{\partial \bY} \bdelta\right)_{i,j}
  \right], \quad \text{and}\\
  \sum \sum \bY_{ij} \log \left( f_{ij}(\bY - \be_{i} \be_{j}^{t})\right)
  &\approx
  \sum \sum \bY_{ij} \log \left[
  f_{ij}(\bY) -
  \bdelta_{i,j} \left(\frac{\partial f( \bY )}{\partial \bY} \bdelta\right)_{i,j}
  \right]
\end{align*}
where the entries of $\bdelta$ should be chosen Bernoulli distributed
with parameter $p=0.5$.
The advantage of these three approximations is
that they can be computed in linear time $O(n m)$ by making use
of the results of \cite{lewis-twice-spectral,Sun02,edelman-handout,MR3105401,deledalle2012risk}
that provide an expression for the directional derivative
given by
\begin{align}
  \frac{\partial f( \bY )}{\partial \bY} \bdelta
  =
  \tilde \bU (\bold D
  + \boldsymbol S
  + \boldsymbol A)
  \tilde \bV^t
\end{align}
where
$\tilde \bU$ and $\tilde \bV$ are the matrices whose
columns are $\tilde \bu_k$ and $\tilde \bv_k$, and
$\bold D$, $\boldsymbol S$ and $\boldsymbol A$ are
$n \times m$ matrices defined,
for all $1 \leq i \leq n$ and $1 \leq j \leq m$, as
\begin{align*}
  \bold D_{i,j} &=
  \bar \bdelta_{i,j}
  \times
  \left\{\begin{array}{ll}
  f'_i(\tilde \sigma_i) & \text{if} \quad i=j\\
  0
  & \text{otherwise},
  \end{array}\right.\\
  \boldsymbol S_{i,j} &=
  \frac{\bar \bdelta_{i,j} + \bar \bdelta_{j,i}}{2}
  \times
  \left\{\begin{array}{ll}
  0 & \text{if} \quad i=j\\
  \frac{f_i(\tilde \sigma_i)-f_j(\tilde \sigma_j)}{\tilde \sigma_i-\tilde \sigma_j}
  & \text{otherwise},
  \end{array}\right. \\
  \boldsymbol A_{i,j} &=
  \frac{\bar \bdelta_{i,j} - \bar \bdelta_{j,i}}{2}
  \times
  \left\{\begin{array}{ll}
    0 & \text{if} \quad i=j\\
    \frac{f_i(\tilde \sigma_i)+f_j(\tilde \sigma_j)}{\tilde \sigma_i+\tilde \sigma_j}
    & \text{otherwise},
  \end{array}\right.
\end{align*}
where $\tilde \sigma_k$ and $f_k(\tilde \sigma_k)$ are extended to $0$
for $k > \min(n,m)$ and
$\bar \bdelta = \tilde \bU^t \bdelta \tilde \bV \in \mathbb{R}^{n \times m}$.

\bibliographystyle{alpha}
\bibliography{GSURE_SVD_Thresh}

\end{document}